\newtheorem{theorem}{Theorem}
\newtheorem{corollary}[theorem]{Corollary}
\newtheorem{lemma}[theorem]{Lemma}
\newtheorem{definition}[theorem]{Definition}
\newtheorem{question}[theorem]{Question}
\newtheorem{example}{Example} 
\newtheorem{remark}{Remark} 
\newtheorem{proposition}[theorem]{Proposition}
\begin{document}

\begin{frontmatter}

\title{  Set-theoretic solutions of the Yang-Baxter equation and new classes of  R-matrices}

\author[ed]{ Agata Smoktunowicz \corref{cor1} }
\ead{A.Smoktunowicz@ed.ac.uk}

\author[wtech]{Alicja Smoktunowicz \corref{cor2} }
\ead{smok@mini.pw.edu.pl}

\cortext[cor1]{Principal corresponding author}
\cortext[cor2]{Corresponding author}

\address[ed]{School of Mathematics, University of Edinburgh,  Edinburgh EH9 3JZ, Scotland, United Kingdom}  

\address[wtech]{ Faculty of Mathematics and Information Science, Warsaw University of Technology, Koszykowa 75, 00-662 Warsaw, Poland}

\begin{abstract} 
   We describe several  methods of constructing $R$-matrices that are dependent upon many parameters, for example unitary $R$-matrices 
 and  R-matrices whose entries are functions.
 As an application, we construct examples of $R$-matrices with prescribed singular values.
    We characterise some classes of  indecomposable  set-theoretic solutions of the  quantum Yang-Baxter equation (QYBE) and construct  $R$-matrices related to such solutions.  In particular, we  establish a correspondence between one-generator braces and indecomposable, non-degenerate involutive set-theoretic solutions of the  
 QYBE, showing that such solutions are abundant.   
 We show that $R$-matrices related to involutive, non-degenerate solutions of the QYBE have special form.
 We also investigate some linear algebra questions related to $R$-matrices. 
\end{abstract}

\begin{keyword}
$R$-matrices \sep the quantum Yang-Baxter equation \sep set-theoretic solution \sep nilpotent rings \sep braces \sep  singular values

\MSC[2010] 15A69\sep 15A19\sep 16T25\sep 16T99\sep 16N20 \sep 16N20 \sep 16N40 

\end{keyword}

\end{frontmatter}

\section{Introduction} 
 The quantum Yang-Baxter equation is an important equation in mathematics and physics. It is relevant to  statistical mechanics, quantum information science and  numerous other research areas.  
Recall that a nonsingular $n^{2}\times n^{2}$ matrix is called  an $R$-matrix when it satisfies the  quantum Yang-Baxter equation:
\[(R\otimes I )(I\otimes R)(R\otimes I) = (I \otimes R)(R \otimes I)(I \otimes R),\]
 where $I$ is the  $n\times n$ identity matrix. 
 In this paper, we will provide examples of $R$-matrices with prescribed singular values. Many of our $R$-matrices are constructed using set-theoretic solutions of the quantum Yang-Baxter equation, and have only one nonzero element in each column.
 This type of matrices appear frequently  in the  literature, for example in \cite{dancer}, where Baxterisation of some $R$-matrices of this type was obtained, and in \cite{gates}, where they appear as  universal gates  for the quantum computation related to the circuit model 
(see Theorem $1$, \cite{gates}). In  \cite{ns, gr, franko}  they appear in  connection with Braid groups and  topological quantum computation (see also   \cite{bn, vk, lr, nikita}).
 They also appear as {\em combinatorial $R$-matrices } in the theory of crystal bases, geometric crystals and box-bell systems. The $R$-matrices related 
  to involutive, non-degenerate set-theoretic solutions of the QYBE 
 give  cocycles into abelian groups \cite{etingof}, therefore  they  can be given as an input in the construction of universal $R$-matrices and twists for Hopf algebras, for example as in Theorem $4.2$,  \cite{gelaki}. 
  
  Most of the   $R$-matrices constructed in our paper are unitary. 
  A unitary R-matrix leads to a unitary representation of the Braid group, and the
resulting unitary matrices associated to braids can be used to process quantum
information \cite{ns, Chen, franko}.  In connection with  the topological quantum computation,  it was conjectured in \cite{gr, rz} that a single unitary $R$-matrix can generate only finite representations of Braid groups, and in \cite {gr} it was confirmed in several important  classes of $R$-matrices.
In \cite{Rowell},  Rowell made the following comment: ``From the quantum information point of view, a unitary R-matrix can be used to directly simulate topological quantum computers on the quantum circuit model.
More recently people have begun to study what extra gates one needs to supplement braiding with in order to achieve universality (see e.g. \cite{ssz}).  
 If a single R-matrix can only generate a (nearly) finite group, can an additional small gate lead to a universal gate set?''.
This question provides the inspiration for our construction of R-matrices with many parameters. All of our examples are locally monomial  BVS (we recall the definition in Section $2$).

  Recall that  locally monomial braided vector spaces (BVS) were introduced  by Galindo and Rowell  in \cite{gr}, and localisation was introduced by Rowell and Wang  in  Definition $2.3$ in  \cite{rz}.
   A related notion of braided vector space of set-theoretic type appeared in \cite{andrusz} in the context of Nichols algebras (note that every braided vector space of set-theoretic type is locally monomial).  In \cite{ghr}, Galindo, Hong and Rowell generalized the idea of localisation in two ways, and
 in \cite{gr} Galindo and Rowell remarked that it is feasible that the monomial and Gaussian
BVS generate a large proportion of  the unitary braided vector spaces (e.g. through quotients and subrepresentations). They also mentioned that they are not aware of any unitary braided vector spaces that do not come from these two
families. Recall that Gaussian braided vector spaces were introduced  28 years ago by Goldschmidt  and Jones, and since then have been investigated by many authors (see for example \cite{gr, rowell2}).
 Since only these two classes of unitary $R$-matrices are known so far, and the Gaussian class is  well understood,  it seems natural to investigate methods of construction of locally monomial BVS; it is the primary motivation of our paper. We will mainly investigate  unitary $R$-matrices related to locally monomial  BVS which are constructed using braces. Recall that braces were introduced by Rump \cite{rump} in 2005. 
 
    The contents of the chapters are as follows:  Section \ref{11111}  contains background information.  Section \ref{567567} investigates linear algebra and matrix theory aspects  related to  $R$-matrices. Section \ref{555} gives new  examples of unitary $R$-matrices constructed by using set-theoretic solutions of the QYBE (all of the examples  are locally monomial).   Section \ref{555} also describes some general methods of constructing unitary locally monomial BVS  by using orbits by analogy with 
 cohomology of racks and cycle sets considered in \cite{cjks, lebed}, and  by introducing $I$-retraction, as a generalisation of the retraction technique from \cite{etingof}.
In the final section we  give concrete examples of $R$-matrices of small dimension illustrating results obtained in this paper.
 
 An additional motivation for our paper is related to  the well-known characterisation of indecomposable solutions of prime order, obtained in  \cite{guralnick} and \cite{etingof}. Recall that in \cite{guralnick},
  Etingof, Guralnick and Soloviev showed that  all indecomposable non-degenerate 
   set-theoretic solutions of the QYBE of prime cardinality are affine, and in  \cite{etingof}, Etingof, Schedler and Soloviev showed that all indecomposable 
 involutive non-degenerate solutions of the QYBE of prime cardinality  are the cyclic permutation solutions. 
    In Sections \ref{ind} and \ref{nil2} we investigate whether it is possible to obtain an analogous  
     characterisation of  indecomposable solutions of arbitrary cardinality.
    We show that every one-generator brace yields a non-degenerate, involutive,  indecomposable solution of the QYBE, and for multipermutation solutions all indecomposable solutions are of this form. We also show that one-generator braces and hence non-degenerate, involutive, indecomposable solutions of the QYBE  are abundant.  This gives a strong indication that it would be impossible to generalise the results from  \cite{guralnick}  and \cite{etingof} to arbitrary cardinalities. As an application, in Section \ref{33333} we  use our results to construct unitary $R$-matrices related to indecomposable solutions.
    In Section \ref{singular}, we give a simple criterion for whether a given $R$-matrix is related to a brace; this observation  relies on the result of Jespers and Okni{\' n}ski that  finite involutive, set-theoretic solutions of the QYBE are left non-degenerate if and only if they are right non-degenerate.

   In integrable systems a different type of the quantum Yang-Baxter equation is used, called the parametrized quantum Yang-Baxter equation (see pages 295-297, \cite{klimyk}). Often, the following  form of this equation  is used $(R(u)\otimes I)(I\otimes R(u + v))(R(v)\otimes I) = (I\otimes R(v))(R(u + v)\otimes I)(I\otimes R(u))$ where $u$ and $v$ are complex variables. 
     Many of the examples of  $R$-matrices obtained in our paper satisfy $R^{2}=I$.  By using  analogous methods as on page 296 \cite{klimyk} it can be shown that  
   if $R$ is an $R$-matrix such that  $R^{2}=I$ then $I+uR$ is a solution of the above parametrized QYBE  (see Proposition \ref{parameterdependent}). Applications of such $R$-matrices are discussed in Section 8.7.3 \cite{klimyk}.
   
  Observe also that, by combining Proposition $2.9$ from \cite{car} (and its proof) with  Examples \ref{hura5}, \ref{simple} from our paper, we can construct extensions of set-theoretic solutions of the QYBE, in particular we can embed involutive solutions in non-involutive solutions of the QYBE. In  \cite{vendramin, squarefree, lebed} extensions of set-theoretic solutions have been used to construct special types of involutive solutions.

  In this paper we use the connection  of non-degenerate, involutive  set-theoretic solutions with nilpotent rings and braces discovered by Rump in 2007 \cite{rump}. A related concept of $F$-braces, introduced in \cite{cr}, has recently found applications in cryptography \cite{codes}. In \cite{ccs} examples of $F$-braces have been constructed by using $2$-cocycles.

\section{ Background information }\label{11111}
 Here we recall  basic information about braces, skew braces, locally monomial braided vector spaces 
 and indecomposable solutions of the  quantum Yang-Baxter equation.

We say that  $X \in \mathbb C^\mathrm{n^2 \times n^2}$ satisfies the quantum Yang-Baxter Equation (QYBE) if
\begin{equation}\label{eqs1}
(X \otimes I_n)    (I_n \otimes X)   (X \otimes I_n)   = (I_n  \otimes X)    (X \otimes I_n)   (I_n \otimes X), 
\end{equation}
where $I_n$ denotes the $n \times n$  identity matrix, and  $A \otimes B$ is the Kronecker product (tensor product) of the matrices $A$ and $B$:
$A \otimes  B= (a_{i,j} B)$. That is, the  Kronecker product $A \otimes  B$ is a block matrix whose $(i,j)$ blocks are $a_{i,j} B$.

Solutions of the quantum Yang-Baxter equation  (\ref{eqs1})   have many interesting properties:
\begin{itemize}
\item If $X \in \mathbb C^\mathrm{n^2 \times n^2}$  satisfies the QYBE (\ref{eqs1}), then $X^{*}$ also satisfies  (\ref{eqs1}).
\item  If $X \in \mathbb C^\mathrm{n^2 \times n^2}$ is an {R-matrix}, then $X^{-1}$ is an {R-matrix}.
\item  If $X \in \mathbb C^\mathrm{n^2 \times n^2}$  satisfies  the QYBE (\ref{eqs1}), then   $ \alpha \, X$  satisfies  the QYBE  (\ref{eqs1})  for every $ \alpha \in \mathbb C$.
\item If $X \in \mathbb C^{n^2 \times n^2}$  satisfies the QYBE  ( \ref{eqs1}) and $P \in \mathbb C^{n\times n}$  is arbitrary nonsingular matrix, then \[\hat X=  ( P \otimes P) X  (P \otimes P)^{-1}\]  also  satisfies  the QYBE (\ref{eqs1}).
\end{itemize}
 The references to the above properties  can be found,  for example,  in \cite{jenfranko, 22}.

 However, it is not true  that if $X \in \mathbb C^{n^2 \times n^2}$ is an   {R-matrix} and  $P, Q \in \mathbb C^{n\times n}$, $P\neq Q$ 
 are arbitrary nonsingular matrices,  then $\bar  X=  ( P \otimes Q) X  (P \otimes Q)^{-1}$
is an {R-matrix}.

 \subsection{Set-theoretic solutions}\label{1234}
 
  Let X be a non-empty set. Let $r: X\otimes X\rightarrow X\otimes X$ be a bijective  map and write
 \[r(x, y) =(\sigma _{x}(y), \tau_{y}(x)).\] We say that $(X, r)$ is a set-theoretic solution of the quantum Yang-Baxter equation if
 \[r_{1}r_{2}r_{1} = r_{2}r_{1}r_{2},\]  where $r_{1} = r\times id_{X} : X\times X\times X \rightarrow X\times X\times X$ and   $r_{2} = id_{X}\times r : X\times X\times X\rightarrow X\times X\times X$.
 We say that $(X,r)$ is  right non-degenerate  if $\sigma _{x} \in  Sym (X)$, for all  $x \in X$, (Sym (X) denotes the set of all permutations of the   set $X$); similarly $(X,r)$ is left non-degenerate if $\tau _{x} \in  Sym (X)$, for all  $x \in X$.
We say that $(X,r)$ is a  non-degenerate involutive set-theoretic
 solution of the quantum Yang-Baxter equation if
 $r^{2} = id_{X\times X}$ 
and  $\sigma _{x}, \tau _{x} \in  Sym (X)$, for all  $x \in X$.
 
 Let $V$ be the linear space spanned by the elements of $X$ over the field of complex numbers.
  By $ \bar {r}:V\otimes V\rightarrow V\otimes V$ we will denote the {\em linearisation } of $r$, i.e. the linear map 
 such that   \[\bar {r} (x\otimes y) =\sigma _{x}(y)\otimes  \tau_{y}(x).\]
 
 Let $(X, r)$ be a set-theoretic solution of the  quantum Yang-Baxter equation, then $(V, \bar{r})$ is a solution of the QYBE. 

 Let $(X,r)$  be a non-degenerate solution of the QYBE. 
 We say that $(X,r)$ is {\em decomposable} if there exist non-empty subsets $X_{1}, X_{2}\subseteq X$ such that  $X=X_{1}\cup X_{2}$ and  $r(X_{i}, X_{j})= (X_{j}, X_{i})$ for all $i,j\leq 2$. 
If it is not possible to find such subsets $X_{1}, X_{2}\subseteq X$  the solution $(X,r)$ is {\em indecomposable}.
In \cite{etingof} Etingof, Schedler and Soloviev  proved that a finite non-degenerate solution $(X,r)$  is indecomposable if and only if $X$ cannot be presented as a union of two non-empty sets $Y_{1}, Y_{2}$ such that $r(Y_{1}, Y_{1})=(Y_{1}, Y_{1})$ and $r(Y_{2}, Y_{2})=(Y_{2}, Y_{2})$.    
 Let $z\in X$. By the {\em orbit } of $z$ we will mean the smallest set $Y\subseteq X$ such that $z\in Y$ and $\sigma _{x}(y)\in Y$ and $\tau _{x}(y)\in Y$, for all $y\in Y, x\in X$.

 \subsection{Locally monomial BVS}\label{123}

 Let $V$ be a linear space over a field $F,$  where,  unless otherwise specified, $F=\mathbb {C}$. 
 Let $I_{V}:V\rightarrow V$ be the identity map on $V$.  
 Recall that a linear authomorphism  
 $c: V\otimes V\rightarrow V\otimes V$ satisfies the quantum Yang-Baxter equation if
 \[(c\otimes I_{V} )(I_{V}\otimes c)(c\otimes I_{V}) = (I_{V} \otimes c)(c \otimes I_{V})(I_{V} \otimes c).\]
 In this case the pair $(V, r)$ will be called a braided vector space (BVS).

Let $(X,r)$ be a set-theoretic solution of the QYBE, and denote as usually
 \[r(x, y) =(\sigma _{x}(y), \tau_{y}(x)).\]

 \begin{definition}\label{set}[Definition $5.8$, \cite{andrusz}] Let $(X,r)$ be a set-theoretic solution of the QYBE and let $D=\{d_{(x, y)}\}_{x,y\in X}$, where $0\neq d_{i,j}\in \mathbb C$.  Let $V$ be the linear space over $\mathbb C$ spanned by elements of $X$.We say that the data $(X,r^{D})$ is {\bf  a braided vector space of set-theoretic type} if 
 the linear map  $r_{D}:V\otimes V\rightarrow V\otimes V$ defined by  \[ {r}_{D}(x\otimes y)=d_{(x,y)}\sigma _{x}(y)\otimes  \tau_{y}(x)\]  satisfies the QYBE.
  Given $(X,r)$ and $D$ by $(X,r^{D})$, we will always mean the linear space $V$ and map $r^{D}$ as above. 
 \end{definition}

Let $(X,r)$ be a set-theoretic solution of the QYBE. In the remainder of this subsection we will also use the following notation 
\[r(x,y)=({ }^xy, x^{y})\] so ${ }^xy=\sigma _{x}(y)$ and $x^{y}=\tau _{y}(x)$.
  We will use it because it is easier to read in Lemma \ref{identity}.
  This notation appears in many papers, for example \cite{tatiana}.
 
 We recall a special case of Lemma $5.7$ from \cite{andrusz}:
\begin{lemma}\label{identity} [Lemma $5.7$, \cite{andrusz}] Let $(X, r)$ be a  non-degenerate, set-theoretic solution of the quantum Yang-Baxter equation. 
 Let $f:X\otimes X\rightarrow \mathbb C$ be a  mapping, and assume that $f$ takes only nonzero values. Then the following statements are equivalent:
\begin{itemize}
\item For every $x,y, z\in X$  
 \[f(x,y)\cdot f(x^{y}, z)\cdot f({{ }^xy},  { }^{x^{y}}z)=f(y,z)\cdot f(x, { }^yz)\cdot f(x^{{ }^yz}, y^{z}).\]
\item  The linear  mapping $c:V\otimes V\rightarrow V\otimes V$ given by 
$c(x\otimes y)=f(x,y){ }^xy{\otimes x^{y}}$ 
 satisfies the quantum Yang-Baxter equation.
\end{itemize}
\end{lemma}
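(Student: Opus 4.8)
The plan is to prove the equivalence by evaluating both sides of the quantum Yang-Baxter equation for $c$ on a basis vector $x\otimes y\otimes z$ of $V\otimes V\otimes V$ and comparing the outputs. Since the monomials $\{x\otimes y\otimes z : x,y,z\in X\}$ form a basis of $V\otimes V\otimes V$, the map identity $(c\otimes I_{V})(I_{V}\otimes c)(c\otimes I_{V}) = (I_{V}\otimes c)(c\otimes I_{V})(I_{V}\otimes c)$ holds if and only if both sides agree on every such basis vector. The governing idea is that, because $c$ sends each basis tensor to a scalar times a single basis tensor, each of the two triple compositions produces exactly one monomial $u\otimes v\otimes w$ multiplied by a scalar that is a product of three values of $f$: the underlying triple $(u,v,w)$ is determined by $r$, while the scalar is determined by $f$.

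First I would compute the left-hand side step by step. Applying $c\otimes I_{V}$ to $x\otimes y\otimes z$ gives $f(x,y)\,{}^{x}y\otimes x^{y}\otimes z$; applying $I_{V}\otimes c$ then yields $f(x,y)f(x^{y},z)\,{}^{x}y\otimes {}^{x^{y}}z\otimes (x^{y})^{z}$; and a final application of $c\otimes I_{V}$ produces
\[
f(x,y)\,f(x^{y},z)\,f({}^{x}y,\,{}^{x^{y}}z)\;\cdot\;{}^{{}^{x}y}({}^{x^{y}}z)\otimes ({}^{x}y)^{{}^{x^{y}}z}\otimes (x^{y})^{z}.
\]
The scalar prefactor is precisely the left-hand side of the cocycle identity in the first bullet.

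An analogous computation of the right-hand side, applying $I_{V}\otimes c$, then $c\otimes I_{V}$, then $I_{V}\otimes c$ in turn, gives the scalar $f(y,z)\,f(x,{}^{y}z)\,f(x^{{}^{y}z},y^{z})$, which is the right-hand side of the cocycle identity, multiplying the monomial ${}^{x}({}^{y}z)\otimes {}^{x^{{}^{y}z}}(y^{z})\otimes (x^{{}^{y}z})^{y^{z}}$. The crucial observation is that the two monomials obtained on the left and on the right coincide: this is exactly the assertion that $r_{1}r_{2}r_{1}=r_{2}r_{1}r_{2}$ on the triple $(x,y,z)$, i.e.\ that $(X,r)$ is a set-theoretic solution of the QYBE, which is part of the hypothesis. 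Hence on each basis vector the two sides of the QYBE for $c$ return the same nonzero monomial, so they agree if and only if their scalar coefficients are equal.

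Putting this together, the QYBE for $c$ holds on all of $V\otimes V\otimes V$ if and only if, for every $x,y,z\in X$, the two scalars coincide, which is exactly the cocycle identity of the first bullet; this establishes both implications at once. I do not expect to need non-degeneracy for the equivalence itself: it is the hypothesis that $r$ is a set-theoretic solution that forces the two monomials to match, while the assumption that $f$ takes only nonzero values serves only to guarantee in addition that $c$ is an automorphism. The main obstacle is purely notational, namely tracking the deeply nested sub- and superscripts such as ${}^{x^{y}}z$ and $(x^{{}^{y}z})^{y^{z}}$ correctly through all three compositions; recording the output of each elementary application of $c\otimes I_{V}$ and $I_{V}\otimes c$ separately, as above, is the safest way to avoid error.
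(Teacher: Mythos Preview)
Your argument is correct and is exactly the standard direct computation one would give for this lemma: evaluate both sides of the braid relation on a basis tensor, observe that each side is a single monomial (because $c$ sends basis tensors to scalar multiples of basis tensors), use the set-theoretic QYBE for $r$ to identify the two monomials, and conclude that equality holds iff the scalar prefactors agree. Your step-by-step tracking of the three applications on each side is accurate, and your remark that non-degeneracy is not actually used for the equivalence (only bijectivity of $r$ and nonvanishing of $f$ are needed, and those already make $c$ invertible) is a valid observation.

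As for comparison with the paper: the paper does not give its own proof of this lemma at all. It is stated as a recall of Lemma~5.7 from \cite{andrusz} (Andruskiewitsch--Gra\~na) and is used as a black box throughout. So there is nothing to compare; your proof simply supplies what the paper omits, and it is the natural argument.
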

  
   The $R$-matrix of a braided vector space $(X,r^{D})$ of set-theoretic type is obtained in the following way  (see \cite{twoKens} pages $94, 95$).
\begin{definition}\label{monmat}
 Let  $(X,r^{D})$ be a braided vector space of set-theoretic type, where $(X,r)$ is a set-theoretic solution of the QYBE and $D=\{d_{(x,y)}\}_{x,y\in X}$. 
  Let $X=\{x_{1}, \ldots , x_{n}\}$ and let $Y$ be the set of pairs $({i}, {j})$, with $1\leq i,j\leq n$, written in the lexicographical ordering.  Then $M$  has rows and columns indexed by the set $Y,$ and the entry  at the intersection of the column indexed by $(i,j)$ 
and the row indexed by $(k,l)$ equals $d_{(x_{i}, x_{j})}$ if $r(x_{i}, x_{j})=(x_{k}, x_{l})$ and $0$ otherwise. This entry will be denoted by $m_{i,j}^{k,l}$.
\end{definition}    

 The method of writing the $R$-matrix related to a  set-theoretic solution $(X, r)$ of the QYBE 
  is the same, as for $(X, r^{D})$ where all elements $d_{x,y}$ in $D$ are $1$.

We now recall the  definition of {\em locally monomial BVS} from \cite{gr}.
\begin{definition}\label{matrices} 
Let $V$ be a vector space over a field $F$. 
 Let $(V, c)$ be a braided vector space. We say that $(V,r)$ is a {\bf locally monomial BVS} if there is a basis $X=\{x_{1}, \ldots , x_{n}\}$ of $V$ such that 
 for each $i,j\leq n$ there are $k,l\leq n$ and $d_{i,j}\in F$ such that  $c(x_{i}\otimes x_{j})=d_{i,j}(x_{k}\otimes x_{l})$. Note that, with respect to the  base $X$, $(V,r)$ is a braided vector space of set-theoretic type $(X, r^{D})$ for some bijective map $r:X\times X\rightarrow X\times X$ and some $D=\{d_{(x,y)}\}_{x,y\in X}$.
\end{definition}
 Let $(V,c)$ be a braided vector space, then the matrix of the map $c:V\otimes V\rightarrow V\otimes V$  in the  base $V$ will be called a {\em locally monomial $R$-matrix}. 
  Note that 
  an $n^{2}$ by $n^{2}$  $R$-matrix $A$ is {\em locally monomial}  if $A=(P\otimes P)B(P^{-1}\otimes P^{-1})$ for some nonsingular $n$ by $n$ matrix $P$ and where $B$ is an $R$-matrix of some braided vector space  $(X, r^{D})$ of set-theoretic type.
   Notice that $B$ is obtained by modifying nonzero entries of a permutation matrix.     
  
 \begin{definition}\label{triv} Let $(X,r^{D})$ be a braided vector space of set-theoretic type. We say that   
  $(X, r^{D})$ is trivial if there
 are nonzero complex numbers $\alpha _{x}$ for $x\in X$ and a constant $c$  such that \[d_{(x,y)}=c\cdot \alpha _{x}\alpha _{y}(\alpha _{\sigma _{x}(y)})^{-1}(\alpha _{\tau _{y} (x)})^{-1}.\]
 Notice that this is equivalent to the condition that there is a nonsingular diagonal  $n$ by $n$ matrix $P$ and a constant $c$  such that $ \bar {M}=c\cdot (P^{-1}\otimes P^{-1})M(P\otimes P)$, where 
 $M$ is the matrix associated to  the solution $(X,r)$ (as below Definition \ref{monmat}) and $\bar M$  its matrix  related to braided vectors space of set-theoretic type  $(X, r^{D})$ (as in Definition \ref{monmat}).  
\end{definition}
 
\begin{lemma}\label{trivial}  Let $(X,r)$ be an involutive set-theoretic solution of the QYBE, and let $(X, r^{D})$ be a braided vector space  of set-theoretic type; then $d_{(x,y)}d_{(\sigma _{x}(y), \tau _{y}(x))}=c^{2}$ for all $x,y\in X$ for some constant $c$ (this can be also written as  $d_{(x,y)}d_{r(x,y)}=c^{2}$).  
\end{lemma}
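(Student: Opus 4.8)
The plan is to recast the statement as an operator identity. Writing $c:=r_D$ for the braiding, so that $c(x\otimes y)=d_{(x,y)}\,\sigma_x(y)\otimes\tau_y(x)$, I would first compute $c^2$. Since $(X,r)$ is involutive, $r(\sigma_x(y),\tau_y(x))=(x,y)$, so that $\sigma_{\sigma_x(y)}(\tau_y(x))=x$ and $\tau_{\tau_y(x)}(\sigma_x(y))=y$; a direct calculation then gives
\[
c^2(x\otimes y)=d_{(x,y)}\,d_{(\sigma_x(y),\tau_y(x))}\,(x\otimes y).
\]
Thus $c^2$ is diagonal in the basis $\{x\otimes y\}$, with eigenvalue $\lambda_{x,y}:=d_{(x,y)}\,d_{r(x,y)}$ on $x\otimes y$, and the assertion of the lemma is exactly that $c^2$ is a scalar multiple of the identity. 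Note at once that $\lambda_{x,y}=\lambda_{r(x,y)}$, so $\lambda$ is automatically constant along the (one- or two-element) orbits of $r$ on $X\times X$; the entire difficulty is to show it is constant across different orbits.

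Next I would feed the braid relation into this. Set $c_{12}=c\otimes I$ and $c_{23}=I\otimes c$, and write $D_{12}=c_{12}^2$, $D_{23}=c_{23}^2$ for the corresponding diagonal operators on $V^{\otimes 3}$. Right-multiplying the QYBE $c_{12}c_{23}c_{12}=c_{23}c_{12}c_{23}$ by $c_{12}$ and then applying the braid relation once more to the factor $c_{12}c_{23}c_{12}$ yields the clean intertwining relation
\[
c_{12}c_{23}\,D_{12}=D_{23}\,c_{12}c_{23}.
\]
Evaluating both sides on a basis vector $x\otimes y\otimes z$ and cancelling the common (nonzero) monomial coefficient and basis vector produces the pointwise identity
\[
\lambda_{x,y}=\lambda_{\tau_{\sigma_y(z)}(x),\,\tau_z(y)}\qquad(x,y,z\in X),
\]
and the mirror manipulation (left-multiplying by $c_{12}$, giving $D_{12}c_{23}c_{12}=c_{23}c_{12}D_{23}$) gives the companion identity $\lambda_{y,z}=\lambda_{\sigma_x(y),\,\sigma_{\tau_y(x)}(z)}$. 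These say that $\lambda$ is invariant under the natural \emph{left} and \emph{right} translations built from $\sigma$ and $\tau$.

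Finally I would use non-degeneracy to propagate these invariances. Since each $\sigma_x$ and each $\tau_x$ is a permutation of $X$, the two moves above let me replace the second coordinate $y$ by $\tau_z(y)$ (adjusting only the first coordinate), and the first coordinate $y$ by $\sigma_x(y)$ (adjusting only the second), without changing the value of $\lambda$. Iterating them, and combining with $\lambda_{x,y}=\lambda_{r(x,y)}$, connects a pair $(x,y)$ to the other pairs lying in the same orbit of the solution in the sense of Subsection~\ref{1234}. This shows $\lambda$ is constant on each such orbit, and hence globally constant; writing the common value as $c^2$ gives the claim.

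I expect this last propagation step to be the main obstacle. The intertwining relations move a pair only within the orbits generated by $\sigma$ and $\tau$, so linking every pair to every other—rather than merely within one component—is precisely the point where the connectivity of the solution is needed; for an indecomposable solution there is a single orbit and the argument closes, whereas for a decomposable solution one is forced to read off the constant component by component. Making the reachability statement precise (showing the iterated $\sigma,\tau$-moves are transitive enough on $X\times X$) is the delicate part of the proof.
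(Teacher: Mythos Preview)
Your approach attempts to prove the lemma as literally stated---for an arbitrary braided vector space of set-theoretic type over an involutive $(X,r)$---and you correctly identify in your last paragraph why this cannot close: the intertwining identities you derive only propagate $\lambda_{x,y}$ within orbits, so for a decomposable $(X,r)$ nothing forces the values on different components to agree. In fact the literal statement is false. Take any decomposable involutive $(X,r)$ with $X=X_1\cup X_2$ and apply Proposition~\ref{3} with $\alpha_{1,1}^2\neq\alpha_{2,2}^2$; then $d_{(x,y)}d_{r(x,y)}=\alpha_{1,1}^2$ for $x,y\in X_1$ but equals $\alpha_{2,2}^2$ for $x,y\in X_2$. (Already the flip $r(x,y)=(y,x)$ on a two-element set furnishes such a counterexample.)

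What the paper actually intends---and uses---is the assertion for a \emph{trivial} $(X,r^D)$ in the sense of Definition~\ref{triv}: the label of the lemma, the one-line proof invoking Definition~\ref{triv}, and its role as a non-triviality criterion in Lemma~\ref{inna} and Example~\ref{simple} all point to a missing hypothesis ``trivial'' in the statement. Under that hypothesis the proof is a direct substitution: if $d_{(x,y)}=c\,\alpha_x\alpha_y\alpha_{\sigma_x(y)}^{-1}\alpha_{\tau_y(x)}^{-1}$, then involutivity gives $\sigma_{\sigma_x(y)}(\tau_y(x))=x$ and $\tau_{\tau_y(x)}(\sigma_x(y))=y$, whence $d_{r(x,y)}=c\,\alpha_{\sigma_x(y)}\alpha_{\tau_y(x)}\alpha_x^{-1}\alpha_y^{-1}$ and the product is $c^2$. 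No braid relation, diagonal-operator computation, or transitivity argument is needed.
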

 \begin{proof} It follows from Definition \ref{triv} and the fact that $r(r(x,y))=(x,y)$  since $r$ is involutive. 
 \end{proof} $\square $
      \begin{lemma}\label{inna} Let $(X,r)$ be a set-theoretic solution of the QYBE. 
     Let $f:X\times X\rightarrow \mathbb C$ be a  mapping such that, for all $x,y,z\in X$,
 \[f(x,y)=f(x^{{ }^yz}, y^{z}),    f(x^{y}, z)=f(x, { }^yz), 
f({{ }^xy},  { }^{x^{y}}z)=f(y,z).\]
     Denote $D=\{d_{(x,y)}\}_{\{x,y\in X\}}$, where $d_{(x,y)}=f(x,y)$ for $x,y\in X$; then $(X, r^{D})$ is a braided vectors space of set-theoretic type.  
 If $(X,r)$ is a non-degenerate  involutive solution of the QYBE, and $f(x,y)f({ }^xy, x^{y})$ is not constant for $x,y\in X$, then 
  $(X,r^{D})$ is not trivial. 
  \end{lemma}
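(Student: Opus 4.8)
The plan is to deduce both claims directly from the material already developed, since this lemma is essentially a bookkeeping exercise with the identity in Lemma \ref{identity} and the triviality formula in Definition \ref{triv}. First I would record that $f$ must be nonzero valued (this is forced, since $(X,r^{D})$ is to be a braided vector space of set-theoretic type, in which all $d_{(x,y)}\neq 0$), so that Lemma \ref{identity} is applicable.

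For the first claim, Lemma \ref{identity} tells us that $(X,r^{D})$ is a braided vector space of set-theoretic type if and only if
\[f(x,y)\cdot f(x^{y}, z)\cdot f({ }^xy,  { }^{x^{y}}z)=f(y,z)\cdot f(x, { }^yz)\cdot f(x^{{ }^yz}, y^{z})\]
for all $x,y,z\in X$. The three displayed hypotheses are precisely what is needed to match the two sides factor by factor: the first says that the first factor on the left equals the third factor on the right, $f(x,y)=f(x^{{ }^yz}, y^{z})$; the second says that the middle factors agree, $f(x^{y}, z)=f(x, { }^yz)$; and the third says that the third factor on the left equals the first factor on the right, $f({ }^xy,  { }^{x^{y}}z)=f(y,z)$. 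Multiplying these three equalities together and reordering the right-hand side yields the identity of Lemma \ref{identity}, so the first claim follows immediately.

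For the second claim I would argue by contradiction via Definition \ref{triv}. Suppose $(X,r^{D})$ were trivial; then there are nonzero scalars $\alpha_{x}$ and a constant $c$ with
\[f(x,y)=c\,\alpha_{x}\alpha_{y}\,(\alpha_{{ }^xy})^{-1}(\alpha_{x^{y}})^{-1}.\]
Writing $(u,v)=r(x,y)$, so that $u={ }^xy$ and $v=x^{y}$, involutivity gives $r(u,v)=r(r(x,y))=(x,y)$, whence $\sigma_{{ }^xy}(x^{y})=x$ and $\tau_{x^{y}}({ }^xy)=y$. Substituting into the triviality formula, $f(u,v)=c\,\alpha_{u}\alpha_{v}\,(\alpha_{x})^{-1}(\alpha_{y})^{-1}$, and therefore
\[f(x,y)\,f({ }^xy, x^{y})=f(x,y)\,f(u,v)=c^{2},\]
all the $\alpha$-factors cancelling. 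Thus the product would be the constant $c^{2}$, contradicting the hypothesis that $f(x,y)f({ }^xy, x^{y})$ is non-constant; hence $(X,r^{D})$ is not trivial. This is the same cancellation that underlies Lemma \ref{trivial}.

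I do not anticipate a genuine obstacle. The first part is a term-by-term comparison with Lemma \ref{identity}, and the second is a short telescoping computation. The only delicate point is the correct use of involutivity to identify $\sigma_{{ }^xy}(x^{y})=x$ and $\tau_{x^{y}}({ }^xy)=y$; the main care required throughout is purely notational, keeping track of the superscript/subscript conventions for $\sigma$ and $\tau$.
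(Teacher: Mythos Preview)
Your proposal is correct and follows essentially the same route as the paper: the paper's proof is the one-line ``It follows from Lemmas \ref{identity} and \ref{trivial}'', and you have simply unpacked those two citations, matching the factors in the identity of Lemma \ref{identity} and reproducing the telescoping cancellation behind Lemma \ref{trivial} via Definition \ref{triv} and involutivity. Your explicit remark that $f$ must be nonzero-valued (so that Lemma \ref{identity} applies) is a useful clarification that the paper leaves implicit.
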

   \begin{proof}
   It follows from  Lemmas \ref{identity} and \ref{trivial}.
   \end{proof} $\square $
 \subsection{ Braces and skew braces}\label{braces}
 We now recall some basic information about braces and skew-braces.  
   The research area started around 2005, when Wolfgang Rump showed some surprising connections between Jacobson radical rings and solutions to the quantum Yang-Baxter equation. In 
   \cite{rump2}, Rump introduced braces, a generalisation of Jacobson radical rings, as a tool to investigate non-degenerate, involutive set-theoretic solutions of the  quantum Yang-Baxter equation, and showed the correspondence between such solutions and braces. Skew braces were recently introduced by Guarnieri and Vendramin to investigate set-theoretic solutions of the QYBE which are not involutive.
 In \cite{rump} Rump showed that every solution $(X,r)$ can be in a good way embedded in a brace.
 \begin{definition}[Proposition $4$, \cite{rump}]
 A {\em left brace } is an abelian group $(A; +)$ together with a 
 multiplication $\cdot $ such that the circle operation $a \circ  b =
 a\cdot b+a+b$ makes $A$ into a group, and $a\cdot (b+c)=a\cdot b+a\cdot c$.
\end{definition} 
 In many papers, the following equivalent definition from \cite{cjo} is used:
 \begin{definition}[\cite{cjo}]
 A {\em left brace } is a set $G$ together with binary operations $+$ and $\circ $ such that 
 $(G, +)$ is an abelian group, $(G, \circ )$ is a group, and 
  $a\circ (b+c)+a=a\circ b+a\circ  c$ for all $a,b,c\in G$.
\end{definition} 
  The additive identity of a brace $A$ will be denoted by $0$ and the multiplicative identity by $1$. In every brace $0=1$. The same notation will be used for skew braces (in every skew brace $0=1$).
  Let $A$ be a left brace. The {\em socle} of $A$ is
 $Soc(A)=\{a\in A: a\circ b=a+b$  for all $b\in A\}=\{a\in A: a\cdot b=0$  for all $b\in A\}.$
 
 \begin{remark} 
 Some authors use the notation $\cdot $  instead of $\circ $ and $*$ instead of $\cdot $ (see for example \cite{cjo, tatiana}).
 \end{remark}
 It was observed by Rump that every nilpotent ring is a brace.
 Let $R$ be a nilpotent ring (associative, and not necessarily commutative) and let $n$ be such that  $R^{n}=0$. It was shown by Rump \cite{rump} that  $R$ yields a 
 solution $r : R\times R\rightarrow R\times R$  to
 the  quantum Yang-Baxter equation with  $r(x, y) = (u, v)$, where $u=x \cdot y +y$, $v=z \cdot x+x$ and $z=\sum_{i=1}^{n}(-1)^{i}u^{i}$.

Let $R$ be either a left brace or a ring, and let  $C,D\subseteq R$; then $CD$ denotes the set consisting of finite sums of elements $cd$ with $c\in C, d\in D$. 
\begin{definition}
 A left brace $R$ is left nilpotent if  $R^{n}=0$ for some $n$ where $R^{1}=R$ and $R^{i+1}=R\cdot R^{i}$. A left brace is right nilpotent if    $R^{(n)}=0$ for some $n$ where $R^{(1)}=R$ and $R^{(i+1)}=R^{(i)}\cdot R$. Radical chains $R^{i}$ and $R^{(i)}$ were introduced by Rump in \cite{rump}. 
\end{definition}

\begin{theorem}\label{1255}\cite{rump}  Let $(X, r)$ be a  non-degenerate, involutive, set-theoretic solution of the QYBE. Then there exists a left  brace $A$ with multiplication $\cdot $ and addition $+$ such that $X\subseteq A$ and 
 \[r(x,y)=(x\cdot y+y, z\cdot x+x),\] for $x,y\in A,$  
  where $z\cdot (x\cdot y+y)+z+x\cdot y+y=0$ for $x,y\in A$   (in each left brace $A$ such $z$ exists and is unique). Moreover, the groups $(A, \circ )$ and $(A, +)$  are generated by elements from $X$.
  \end{theorem}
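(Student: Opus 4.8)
The plan is to realise $A$ as the \emph{structure group} of $(X,r)$ and then to endow its underlying set with a second, abelian operation $+$, turning it into a left brace whose associated solution is exactly $r$. Writing the group operation as $\circ$, let $A$ be the group generated by the symbols $x\in X$ subject to the relations $x\circ y=\sigma_x(y)\circ\tau_y(x)$ for all $x,y\in X$. Since $(X,r)$ satisfies the set-theoretic QYBE (the braid relation $r_1r_2r_1=r_2r_1r_2$), these relations are mutually compatible, so $A$ is well defined and is generated by the image of $X$; this already gives the last sentence of the statement for $(A,\circ)$.

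The core of the argument is to construct an abelian group structure $+$ on the set $A$ that is compatible with $\circ$. I would let $A$ act on the free abelian group $A_0=\mathbb{Z}^{(X)}$ through the assignment sending a generator $x$ to the basis permutation $e_y\mapsto e_{\sigma_x(y)}$ (that this extends to a genuine action of $A$ by automorphisms is itself part of what must be checked), and then define $\pi\colon A\to A_0$ on generators by $\pi(x)=e_x$, extended by the $1$-cocycle rule $\pi(g\circ h)=\pi(g)+g\cdot\pi(h)$. Involutivity enters immediately: the identity $r^2=\mathrm{id}$ forces $\sigma_{\sigma_x(y)}(\tau_y(x))=x$, which is exactly what makes $\pi$ respect the defining relations. \textbf{The main obstacle} is to prove that this $\pi$ is a \emph{bijective} $1$-cocycle for an action by automorphisms; this is the technically deepest point and is precisely where non-degeneracy (together with involutivity) is indispensable, since it amounts to the structure theorem identifying $A$ with a group of I-type. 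Granting bijectivity, I transport the addition of $A_0$ to $A$ by setting $a+b:=\pi^{-1}(\pi(a)+\pi(b))$, making $(A,+)$ an abelian group.

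It then remains to verify the brace axiom and to read off the formula for $r$. Unwinding the cocycle identity through the bijection $\pi$ converts it into the distributive law $a\circ(b+c)+a=a\circ b+a\circ c$, so $(A,+,\circ)$ is a left brace; defining the multiplication by $a\cdot b:=a\circ b-a-b$, one gets $\lambda_x(y):=x\circ y-x=x\cdot y+y$, and a direct computation identifies $\sigma_x(y)$ with $x\cdot y+y$ and $\tau_y(x)$ with $z\cdot x+x$, where $z$ is the $\circ$-inverse of $\sigma_x(y)$ (note $\lambda_z=\lambda_{\sigma_x(y)}^{-1}$ since $\lambda$ is a homomorphism into $\mathrm{Aut}(A,+)$). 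Because $(A,\circ)$ is a group, this $z$ exists and is unique, and it is characterised by $z\circ(x\cdot y+y)=0$, i.e. by $z\cdot(x\cdot y+y)+z+x\cdot y+y=0$. Finally, since $\pi$ sends the generators $x$ to the basis $e_x$ of $A_0$ and intertwines $\circ$ with $+$, the group $(A,+)$ is generated by $X$ as well, completing the proof. As an alternative I could follow Rump's original route through non-degenerate cycle sets: the bijections $\sigma_x^{-1}$ equip $X$ with a cycle-set operation whose structure group carries the desired brace, with the bijectivity obstacle reappearing there as non-degeneracy of the cycle set.
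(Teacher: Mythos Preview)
The paper does not give its own proof of this theorem: it is stated as a cited background result from Rump, with the only additional comment (just after Proposition~\ref{1256}) that ``$(A,\circ)$ is generated by $X$ because $A$ is a factor of the structure group of $X$''. So there is no paper proof to compare against; your sketch is being assessed on its own merits and against what is standard in the literature.

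Your outline is essentially the standard route (Etingof--Schedler--Soloviev for the structure group, then the bijective $1$-cocycle / I-type structure to obtain the additive group, as in Rump and Ced\'o--Jespers--Okni\'nski), and you have correctly located the genuine difficulty: showing that the cocycle $\pi\colon A\to \mathbb{Z}^{(X)}$ is bijective. Two small points are worth tightening. First, you should say explicitly that the canonical map $X\to A$ is injective (so that ``$X\subseteq A$'' is literally true); this is itself a nontrivial fact about the structure group of a non-degenerate involutive solution and is usually proved alongside the bijectivity of $\pi$. Second, note that the paper speaks of $A$ as a \emph{factor} of the structure group, so it is implicitly allowing any brace quotient in which $X$ still embeds (e.g.\ the permutation brace in the finite case, cf.\ Proposition~\ref{1256}); your choice of the structure group itself is the canonical and cleanest one, and certainly suffices for the statement as written. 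The alternative cycle-set route you mention is precisely Rump's original argument.
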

  \begin{proposition}\cite{CGIS}\label{1256}
   Let notation be as in Theorem \ref{1255}, if $X$ is finite then $A$ can be chosen to be finite.
 \end{proposition}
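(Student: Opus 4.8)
The plan is to realise $A$ as a finite quotient of the infinite structure brace. I would start with $A_{0}=G(X,r)$, the structure brace provided by Theorem \ref{1255}, in which the elements of $X$ are distinct and generate both $(A_{0},+)$ and $(A_{0},\circ)$; recall that for a finite non-degenerate involutive solution the additive group $(A_{0},+)$ is free abelian of rank $|X|$. Consider the map $\lambda:(A_{0},\circ)\to \mathrm{Aut}(A_{0},+)$ given by $\lambda_{a}(b)=a\cdot b+b$; it is a homomorphism, and $Soc(A_{0})=\{a:\lambda_{a}=\mathrm{id}\}=\ker\lambda$. The first step is to show $\mathrm{Im}\,\lambda$ is finite: for $x,y\in X$ one has $\lambda_{x}(y)=\sigma_{x}(y)\in X$, so each $\lambda_{x}$ restricts to the permutation $\sigma_{x}$ of $X$; since $\lambda$ is multiplicative and $X$ generates $(A_{0},+)$, every $\lambda_{a}$ restricts to a permutation of $X$ and is determined by that restriction, giving an embedding $\mathrm{Im}\,\lambda\hookrightarrow Sym(X)$. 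Hence $Soc(A_{0})$ has finite index in $A_{0}$, and being a finite-index subgroup of a free abelian group it is itself free abelian of rank $|X|$.

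Next I would build a finite-index ideal contained in the socle. For $m\ge 1$ put $I=m\,Soc(A_{0})$, the subgroup of additive multiples. Because $Soc(A_{0})$ is $\lambda$-invariant and each $\lambda_{a}$ is additive, $I$ is again $\lambda$-invariant; moreover every element of $I\subseteq Soc(A_{0})$ acts trivially on the left, so $i\cdot a=0$ and $a\cdot i=\lambda_{a}(i)-i\in I$ for all $a\in A_{0}$, $i\in I$. A short computation then shows that any $\lambda$-invariant additive subgroup of $Soc(A_{0})$ is automatically a normal subgroup of $(A_{0},\circ)$ (using $a\circ i=\lambda_{a}(i)\circ a$ for $i$ in the socle), so $I$ is an ideal of the brace $A_{0}$, of finite index. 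Finally I would choose $m$ large: there are only finitely many nonzero differences $x-x'$ with $x\neq x'$ in $X$, each of infinite order in the free abelian group $(A_{0},+)$, so for $m$ exceeding all of their coordinates none of them lies in $I=m\,Soc(A_{0})$ (those not already in $Soc(A_{0})$ lie outside $I$ trivially).

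Then I would set $A=A_{0}/I$. It is finite since $I$ has finite index, it is generated by the image of $X$ under the quotient map $q$, and $q$ is a brace homomorphism that is injective on $X$ by the choice of $m$. As $q$ intertwines the brace operations, and sends the element $z$ associated with a pair to the corresponding element in $A$ by its uniqueness, it carries $r(x,y)=(x\cdot y+y,\,z\cdot x+x)$ in $A_{0}$ to the analogous formula in $A$; identifying $X$ with $q(X)$ gives $X\subseteq A$ with $r$ recovered by the brace formula of Theorem \ref{1255}, and with $(A,+)$, $(A,\circ)$ generated by $X$. I expect the main obstacle to be the verification that $I$ is genuinely an ideal of $A_{0}$ (the normality in $(A_{0},\circ)$ and the interaction of the two operations on the socle), together with the finiteness of $\mathrm{Im}\,\lambda$; once $Soc(A_{0})$ is shown to have finite index, the remaining step of avoiding finitely many elements is routine for a free abelian additive group.
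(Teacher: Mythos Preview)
The paper does not supply its own proof of this proposition: it is stated with a citation to \cite{CGIS}, and the only accompanying remark is that $A$ arises as a factor of the structure group of $X$. Your proposal is precisely a fleshed-out version of that argument, and it is correct. You take the structure brace $A_{0}=G(X,r)$, use that $\mathrm{Im}\,\lambda$ embeds in $Sym(X)$ to get $[A_{0}:Soc(A_{0})]<\infty$, observe that any $\lambda$-invariant additive subgroup of $Soc(A_{0})$ is an ideal (your computation $a\circ i=\lambda_{a}(i)\circ a$ for $i$ in the socle handles normality in $(A_{0},\circ)$), and then choose $m$ so that $I=m\,Soc(A_{0})$ separates the finitely many nonzero differences $x-x'$ in the free abelian group $(A_{0},+)\cong\mathbb{Z}^{X}$. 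One small point of precision: when you say ``$m$ exceeding all of their coordinates'' you should specify that these are coordinates with respect to a $\mathbb{Z}$-basis of $Soc(A_{0})$ (not of $A_{0}$); with that clarification the argument goes through, and the resulting $A=A_{0}/I$ is the desired finite brace.
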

 The fact that $(A, \circ )$ is generated by $X$ follows because $A$ is a factor of the structure group of $X$
  \cite{CGIS, cjo}. 
  The structure group of a non-degenerate set-theoretic solution $(X,r)$  is the  group generated by elements of $X$ subject to all relations $xy=uv$ where $r(x,y)=(u,v)$ (this group was introduced in \cite{etingof}). 
  The permutation group of a solution $(X,r)$ is the group generated by mappings $\sigma _{x}$ where $r(x,y)=(\sigma _{x}(y), \tau _{y}(x))$.
 The permutation group first appeared as a tool in the proof of  Theorem $2.15$ in \cite{etingof}, without a concrete name but with the notation 
 $G_{X}^{0}$. In \cite{GI4} the name  {\em permutation group} was introduced and this group was explicitly investigated.

\begin{definition} Let $A$ be a brace, and for $x, y\in A$ define 
 \[r'(x,y)=(x\cdot y+y, z\cdot x+x),\] where  
  $z$ is such that $z\cdot (x\cdot y+y)+z+x\cdot y+y=0$ for $x,y\in A$.
We will say that $r': A\times A\rightarrow A\times A$ is the {\bf Yang-Baxter map associated to $A$}.
Let $X\subseteq A$; we will say that $r:X\times X\rightarrow X\times X$ is a {\bf restriction of $r'$ } if $r(x,y)=r'(x,y)$ for all $x, y\in X$.   
\end{definition}

 In \cite{etingof}, Etingof, Schedler and Soloviev introduced  the retract relation for any  solution $(X,r)$. Denote  $X=\{x_{1}, \ldots , x_{n}\}$ and
 $r(x,y)=(\sigma _{x}(y), \tau _{y}(x))$.  
  Recall that the retract relation $\sim $ on $X$  is defined by $x_{i}\sim   x_{j}$  if
$\sigma _{i} = \sigma _{j}$. The  induced solution
 $Ret(X, r) = (X/\sim , r^{\sim })$  is called the {\em retraction } of $X$. A solution $(X, r)$ is
called  a {\em  multipermutation solution of level m} if m is the smallest nonnegative
integer such that  after $m$ retractions we obtain the solution with one element.
 By $mpl(X,r)$ we will denote the multipermutation level of $(X,r)$. 

 We now recall the definition of a skew brace. Skew braces also yield non-degenerate solutions of the QYBE (see \cite{gv}). 
 \begin{definition}\cite{gv} 
 A {\em skew brace } is a set $G$ together with binary operations $+$ and $\circ $ such that 
 $(G, +)$ is a group, $(G, \circ )$ is a group, and 
  $a\circ (b+c)=a\circ b+(-a)+a\circ  c$ for all $a,b,c\in G$.
 Here $(-a)$ is the inverse of $a$ in the additive group of $G$, so $a+(-a)=0$.
\end{definition} 
 Notice that the group $(A, +)$ need not be commutative; for this reason some authors prefer to use the notation $\cdot $ instead of $+$ in the definition of skew braces \cite{gv, sv}. 
 
\section{ Matrix theory observations}\label{567567}
 
In this section we consider the Hadamard product of $A, B  \in \mathbb C^\mathrm{m \times n}$: $A \circ  B= (a_{i,j} b_{i,j}) $.

\begin{proposition}\label{four} Let $A,B$ be $R$-matrices, such that when we put all nonzero entries of these matrices to $1$ the obtained
 matrix is the same for both matrices  $A$ and $B$, and it is a permutation matrix. Then the Hadamard product of $A$ and $B$ is also an $R$-matrix. 
\end{proposition}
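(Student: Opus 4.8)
The plan is to exploit the fact that, since setting all nonzero entries of $A$ and $B$ to $1$ yields one and the same permutation matrix, both $A$ and $B$ are \emph{monomial} matrices sharing a common support. First I would fix a basis $e_1,\dots,e_n$ of $\mathbb C^n$ indexed by $X=\{1,\dots,n\}$ and record how such a monomial $R$-matrix acts on $\mathbb C^n\otimes\mathbb C^n$: there is a single bijection $r:X\times X\to X\times X$, $r(i,j)=({}^i j, i^j)$ (the common support), together with nowhere-zero scalar functions $f_A,f_B:X\times X\to\mathbb C$ such that $A(e_i\otimes e_j)=f_A(i,j)\,e_{{}^i j}\otimes e_{i^j}$ and $B(e_i\otimes e_j)=f_B(i,j)\,e_{{}^i j}\otimes e_{i^j}$. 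The immediate observation, which is the crux of the statement, is that the Hadamard product acts by $(A\circ B)(e_i\otimes e_j)=f_A(i,j)f_B(i,j)\,e_{{}^i j}\otimes e_{i^j}$, i.e.\ it is again monomial with the \emph{same} support $r$ and with scalar function the pointwise product $f_Af_B$; in particular its nonzero pattern is the same permutation matrix and it is nonsingular, so it is eligible to be an $R$-matrix.

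Next I would evaluate the QYBE (\ref{eqs1}) for a monomial matrix on the basis vectors $e_x\otimes e_y\otimes e_z$ and observe that, because these basis vectors are linearly independent and all scalars involved are nonzero, the equation splits into two independent requirements: an \emph{index} condition, asserting that $r$ is a set-theoretic solution of the QYBE (the braid relations on ${}^x y$ and $x^y$), and a \emph{scalar} condition, namely the multiplicative cocycle identity
\[f(x,y)\,f(x^{y},z)\,f({}^x y,{}^{x^y}z)=f(y,z)\,f(x,{}^y z)\,f(x^{{}^y z},y^{z})\]
of Lemma \ref{identity}. Since $A$ is an $R$-matrix, its support $r$ satisfies the index condition and $f_A$ satisfies the cocycle identity; since $B$ is an $R$-matrix with the same support $r$, the function $f_B$ satisfies the cocycle identity as well (and the index condition for $r$ is already in hand).

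It then remains only to check the scalar condition for $A\circ B$, whose scalar function is $f_Af_B$. This is where multiplicativity does all the work: substituting $f=f_Af_B$, the left-hand side of the cocycle identity factors as the product of the left-hand sides for $f_A$ and for $f_B$, each of which equals the corresponding right-hand side; regrouping gives exactly the right-hand side for $f_Af_B$. Hence $A\circ B$ shares with $A$ the index (set-theoretic) condition and satisfies the cocycle condition, so by the equivalence above it satisfies (\ref{eqs1}); being also nonsingular, it is an $R$-matrix.

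I would expect the only genuine bookkeeping obstacle to be the six-fold index gymnastics needed to verify that the QYBE for a monomial matrix decouples exactly into the index condition and the cocycle identity --- essentially re-deriving the content of Lemma \ref{identity} on the nose. A secondary point to be careful about is that Lemma \ref{identity} is stated for non-degenerate solutions; to avoid importing that hypothesis I would carry out the decoupling directly by the basis computation sketched above, which needs only that $r$ is a bijection and that all scalar values are nonzero, both of which are guaranteed by the permutation-matrix support.
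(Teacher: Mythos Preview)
Your proposal is correct and follows essentially the same route as the paper: identify $A$ and $B$ as monomial $R$-matrices with the same underlying set-theoretic solution $r$, extract the scalar cocycles $f_A$, $f_B$ via Lemma~\ref{identity}, multiply the two cocycle identities pointwise to obtain the cocycle identity for $f_Af_B$, and conclude that $A\circ B$ is an $R$-matrix. The only difference is cosmetic: the paper simply invokes Lemma~\ref{identity} in both directions, whereas you propose to redo the basis computation directly to sidestep the non-degeneracy hypothesis stated there---a reasonable precaution, but not a genuinely different argument.
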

\begin{proof} $A$ is an $n^{2}$ by $n^{2}$ matrix 
 for some $n$. We can assume that the rows and columns of $A$  are indexed by the set of pairs $(i,j)$ with $i,j\leq n$ in the lexicographical ordering. Denote $X=\{1, \ldots , n\}$. Recall that  $A$ satisfies QYBE, is non-singular and has exactly one nonzero-element in each column. It follows that 
  $A$ can be obtained as in Definition \ref{monmat} from some braided vectors space of set-theoretic type $(X, r^{D})$  
  for some  $D=\{f(x,y)\}_{x,y\in X}$. Denote by $a_{i,j}^{k,l}$ the entry at the intersection of the $(i,j)$-th column and $(k,l)$-th row of $A$. By Definition \ref{monmat},  
   $r(i,j)=(k,l)$ if and only if $a_{i,j}^{k,l}\neq 0$.  
   Denote $r(x,y)=({ }^xy, x^{y})$. Since $A$ is an $R$-matrix, the linear mapping $c(x\otimes y)=f(x,y){ }^xy\otimes x^{y}$ satisfies QYBE where $f(x,y)=a_{x,y}^{r(x,y)}$. 
   
   Similarly, $B$ is the matrix related to some braided vector space of set-theoretic type $(X, {\tilde r}^{D'})$  
  for some  $\tilde r$ and some $D'=\{f'(x,y)\}_{x,y\in X}$, where rows and columns of $B$ are indexed by pairs $(i,j)$ with $i,j\leq n$.  Denote by $b_{i,j}^{k,l}$ the entry at the intersection of the $(i,j)$-th column and $(k,l)$-th row of $B$. By Definition \ref{monmat},  
   $\tilde {r}(i,j)=(k,l)$ if and only if $b_{i,j}^{k,l}\neq 0$, hence $r=\tilde {r}$ (by assumption $A$ and $B$ have  non-zero entries at the same places).  
   Recall that $r(x,y)=({ }^xy, x^{y})$. Since $A$ is an $R$-matrix, the linear mapping $c'(x\otimes y)=f'(x,y){ }^xy\otimes x^{y}$ satisfies QYBE where $f'(x,y)=b_{x,y}^{r(x,y)}$.
   
   By Lemma \ref{identity},  
   for every $x,y, z\in X$  
 \[f(x,y)\cdot f(x^{y}, z)\cdot f({{ }^xy},  { }^{x^{y}}z)=f(y,z)\cdot f(x, { }^yz)\cdot f(x^{{ }^yz}, y^{z}).\]
  Similarly, 
 \[f'(x,y)\cdot f'(x^{y}, z)\cdot f'({{ }^xy},  { }^{x^{y}}z)=f'(y,z)\cdot f'(x, { }^yz)\cdot f'(x^{{ }^yz}, y^{z}).\]
   By multiplying these two equations, we get  \[g(x,y)\cdot g(x^{y}, z)\cdot g({{ }^xy},  { }^{x^{y}}z)=g(y,z)\cdot g(x, { }^yz)\cdot g(x^{{ }^yz}, y^{z})\]
  where $g(x,y)=f'(x,y)f'(x,y)=a_{(x,y)}^{r(x,y)}b_{(x,y)}^{r(x,y)}$.  
   Denote $D''=\{g(x,y)\}_{ x, y\in X}.$
   By Lemma \ref{identity}, $(X, r^{D''})$ is a braided vector space of set-theoretic type. By  Definition  \ref{monmat}, the $R$-matrix  related to 
    $(X, r^{D''})$ equals $A\circ B$, consequently the matrix  $A\circ B$ satisfies the QYBE.  
\end{proof} $\square $

\begin{proposition}\label{hadamard} Let $A$ be an $R$-matrix such that when we put all its nonzero entries to $1$ the obtained matrix  is a permutation matrix. Let $G: \mathbb C\rightarrow \mathbb C$ be a function with nonzero values such that $G(pq)=G(p)G(q)$ for all $p,q\in \mathbb C$.
 If $A$ has entries $a_{i,j}$, then the matrix  whose $i,j$-th entry is  $G(a_{i,j})$ (for each $i,j$) is an $R$-matrix.
\end{proposition}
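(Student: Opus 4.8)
The plan is to run essentially the argument already used for Proposition \ref{four}, with the product of two cocycles replaced by the single composite $G\circ f$. First I would reuse the identification from the proof of Proposition \ref{four}: since $A$ is nonsingular and its nonzero entries form a permutation matrix, it has exactly one nonzero entry in each column (and row), so it is the matrix associated by Definition \ref{monmat} to a braided vector space of set-theoretic type $(X,r^{D})$ over a non-degenerate set-theoretic solution $(X,r)$ with $X=\{1,\dots,n\}$. Here $r$ is read off from the zero-pattern by $r(i,j)=(k,l)\iff a_{i,j}^{k,l}\neq 0$, and $D=\{f(x,y)\}_{x,y\in X}$ with $f(x,y)=a_{x,y}^{r(x,y)}$ the unique nonzero entry in the column indexed by $(x,y)$. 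Because $A$ is an $R$-matrix, Lemma \ref{identity} guarantees that $f$ satisfies the multiplicative cocycle identity
\[f(x,y)\cdot f(x^{y}, z)\cdot f({{ }^xy},  { }^{x^{y}}z)=f(y,z)\cdot f(x, { }^yz)\cdot f(x^{{ }^yz}, y^{z})\]
for all $x,y,z\in X$.

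Next I would put $h(x,y)=G(f(x,y))$ and verify that $h$ again satisfies this identity. This is the step where the hypothesis on $G$ does all the work: applying $G$ to both sides of the identity for $f$ and using $G(pq)=G(p)G(q)$ twice on each side converts the product of the three $f$-values into the product of the three corresponding $h$-values, so the equality transfers verbatim from $f$ to $h$. Since $f$ is nowhere zero and $G$ is nonzero on nonzero arguments, $h$ is nowhere zero, so Lemma \ref{identity} applies in the reverse direction and $(X,r^{D''})$ with $D''=\{h(x,y)\}_{x,y\in X}$ is a braided vector space of set-theoretic type; its associated matrix is therefore an $R$-matrix. By Definition \ref{monmat}, that matrix has the same zero-pattern as $A$ and carries $h(x,y)=G\!\left(a_{x,y}^{r(x,y)}\right)$ in the single nonzero position of each column, so it is exactly the matrix obtained from $A$ by replacing each nonzero entry $a_{i,j}$ by $G(a_{i,j})$, as asserted.

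The point requiring the most care is the treatment of the zero entries, which also fixes the correct reading of the hypothesis on $G$. A multiplicative $G$ that were genuinely nonzero at $0$ would force $G\equiv 1$ (set $q=0$ in $G(pq)=G(p)G(q)$), and applying it entrywise to, say, the identity $R$-matrix would produce the singular all-ones matrix; so the statement must be understood with $G(0)=0$, ensuring that the zero-pattern, and hence the permutation structure underlying $(X,r)$, is preserved, while the ``nonzero values'' hypothesis is precisely what keeps the nonzero positions nonzero so that $h$ meets the hypotheses of Lemma \ref{identity}. Apart from this interpretive subtlety, the argument is a direct transcription of the proof of Proposition \ref{four}; indeed the special case $G(z)=z^{k}$ recovers the $k$-fold Hadamard self-product of $A$, and the only new ingredient for arbitrary multiplicative $G$ is the elementary observation that multiplicativity preserves the cocycle identity. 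I expect no further obstacle.
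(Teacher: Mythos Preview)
Your argument is correct and follows essentially the same route as the paper's own proof: reuse the setup of Proposition~\ref{four} to identify $A$ with the matrix of a braided vector space of set-theoretic type $(X,r^{D})$, apply $G$ to both sides of the cocycle identity from Lemma~\ref{identity}, and conclude via Definition~\ref{monmat}. Your additional paragraph on the zero entries and the necessary reading $G(0)=0$ is a genuine clarification that the paper glosses over; without it the statement is indeed vacuous or false, so this care is a welcome addition rather than a deviation.
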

\begin{proof}  Let the first $9$ lines be as in the proof of Proposition \ref{four}.  
  By Lemma \ref{identity}, for every $x,y, z\in X$  
 \[f(x,y)\cdot f(x^{y}, z)\cdot f({{ }^xy},  { }^{x^{y}}z)=f(y,z)\cdot f(x, { }^yz)\cdot f(x^{{ }^yz}, y^{z}).\]
    By applying function $G$ to both sides of this equation we get 
 \[G(f(x,y))\cdot G(f(x^{y}, z))\cdot G(f({{ }^xy},  { }^{x^{y}}z))=G(f(y,z))\cdot G(f(x, { }^yz))\cdot G(f(x^{{ }^yz}, y^{z})).\]
   By Lemma \ref{identity}, $(X,  r^{D'})$  is a braided vector space of set-theoretic type, where  $D'=\{G(f(x,y))\}_{x,y\in X}$. Therefore
    the matrix  whose $i,j$-th entry is  $G(a_{i,j})$ (for each $i,j$) is an $R$-matrix.
\end{proof} $\square $

The next example shows that Propositions \ref{four} and \ref{hadamard} need not hold in general for arbitrary  $R$-matrices $A$ and $B$.
\begin{example}
Let 
\[
X = \left(
\begin{array}{cccc}
1  &   0 &    0  &   0 \\
 0 &    -3  &   2  &   0 \\
  0 &    2  &   0 &    0 \\
  0  &    0 &    0    &  1  
\end{array}\right).
\]

Then $det X=-4$ and $X$ is an R-matrix. 
We can verify that the Hadamard product  $X \circ  X$ is not an R-matrix.
\end{example}
It is  a simple matter to prove the following facts using known properties of the Kronecker products.
\begin{proposition}\label{thm1}
Let   $C, D  \in \mathbb C^\mathrm{n \times n}$. Then $X=C \otimes D$  satisfies  the quantum Yang-Baxter equation  (\ref{eqs1})     if and only if 
$C^2 \otimes DCD \otimes D= C \otimes CDC \otimes  D^2$.
\end{proposition}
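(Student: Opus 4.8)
The plan is to substitute $X = C \otimes D$ directly into both sides of the QYBE (\ref{eqs1}) and reduce each side to a single triple Kronecker product by repeated use of the mixed-product rule $(A_1 \otimes A_2)(B_1 \otimes B_2) = (A_1 B_1) \otimes (A_2 B_2)$. First I would rewrite the two elementary factors appearing in (\ref{eqs1}) using associativity of the Kronecker product: $X \otimes I_n = C \otimes D \otimes I_n$ and $I_n \otimes X = I_n \otimes C \otimes D$. After this rewriting every factor on both sides is a threefold Kronecker product of $n \times n$ matrices, so the mixed-product rule applies slot by slot.

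For the left-hand side I would multiply the two leftmost factors first, obtaining $(C \otimes D \otimes I_n)(I_n \otimes C \otimes D) = C \otimes DC \otimes D$, and then multiply on the right by the remaining factor $C \otimes D \otimes I_n$ to get $C^2 \otimes DCD \otimes D$. Symmetrically, for the right-hand side I would compute $(I_n \otimes C \otimes D)(C \otimes D \otimes I_n) = C \otimes CD \otimes D$ and then multiply by $I_n \otimes C \otimes D$ to obtain $C \otimes CDC \otimes D^2$. Since the two sides of (\ref{eqs1}) are then literally $C^2 \otimes DCD \otimes D$ and $C \otimes CDC \otimes D^2$, the equation holds if and only if these two products coincide, which is exactly the claimed identity; the ``if and only if'' is thus immediate once both reductions are in hand.

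The step most prone to error is the bookkeeping when applying the mixed-product rule to the threefold products: one must keep the three tensor slots in a fixed order and track, in each slot separately, which matrix multiplies on the left and which on the right, being careful that $C$ and $D$ need not commute (this is precisely why $DCD$ and $CDC$ appear in the respective middle slots, rather than a common expression). Beyond this care there is no genuine obstacle — the result follows directly from the multiplicativity of $\otimes$ in each slot, as the remark preceding the statement already anticipates in referring to known properties of the Kronecker product.
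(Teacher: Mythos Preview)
Your proposal is correct and is exactly the computation the paper has in mind: the paper gives no detailed proof, stating only that ``it is a simple matter to prove the following facts using known properties of the Kronecker products,'' and your slot-by-slot application of the mixed-product rule is precisely that simple verification.
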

Notice that Proposition \ref{thm1} implies that the  Kronecker product of locally monomial matrices need not be an $R$-matrix.

The next example shows that if $X \in \mathbb C^\mathrm{n^2 \times n^2 }$ is a singular solution of  the QYBE (\ref{eqs1})  then $X^\dagger$,  called  the Moore-Penrose pseudo-inverse of $X$,  may  not satisfy (\ref{eqs1}). We recall the $X^\dagger$ is  uniquely determined by the following conditions: 
$X X^\dagger  X=X$,  $X^\dagger XX^\dagger =X^\dagger$,  $X^\dagger X=(X^\dagger X)^*$ and  $XX^\dagger=(XX^\dagger)^*$.

\begin{example}\label{pinv}
Let 
\[
C = \left(
\begin{array}{cccc}
1  &   0 &    1 &   1 \\
 0 &    1  &   1  &   2 \\
  0 &    0  &   0 &    0 \\
  0  &    0 &    0    &  0
\end{array}\right).
\]

It is easily seen at once that  $C^2=C$, so $C$ is an idempotent matrix (a projection). Now we define  $X=C \otimes I_4$.  By Propositon \ref{thm1} $X$ satisfies the QYBE (\ref{eqs1}) and we have $X^\dagger=C^\dagger  \otimes I_4$, where
\[
C^\dagger = \frac{1}{3} \,\left(
\begin{array}{cccc}
2  &   -1 &    0 &   0 \\
 -1 &    1  &   0  &   0 \\
  1 &    0  &   0 &    0 \\
  0  &    1 &    0    &  0
\end{array}\right).
\]
A simple verification show that $C^\dagger$ is not an idempotent matrix, so by Proposition \ref{thm1},  $X^\dagger$ is not a solution of the QYBE (\ref{eqs1}).  
 \end{example}

By $I_{n}$ we will denote the $n\times n$ identity matrix.
\medskip
Notice that, by proceeding analogously as on page
 296 \cite{klimyk}, we get the following observation:
\begin{proposition} \label{parameterdependent}  
Let $n$ be a natural number and let $A$ be an $n^{2}\times n^{2}$  $R$-matrix such that $A^{2}$ is the identity matrix. For each $\alpha \in \mathbb C$ the matrix  $R(x)=I_{n^{2}}+\alpha xA$ is a solution of the parameter-dependent Yang-Baxter equation  (where $x$ is the variable):
\[(R(x)\otimes  I_{n})(I_{n}\otimes R(x+y))(R(y)\otimes I_{n})=
(I_{n}\otimes R(y))(R(x+y)\otimes I_{n})(I_{n}\otimes R(x)).\]
\end{proposition}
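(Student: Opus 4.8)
The plan is to reduce the parametrized identity to the ordinary Yang--Baxter equation for $A$ together with the relation $A^2=I$, by a direct expansion in powers of the parameter. First I would introduce the abbreviations $A_1=A\otimes I_n$ and $A_2=I_n\otimes A$, both $n^3\times n^3$ matrices. The hypothesis that $A$ is an $R$-matrix says precisely that $A_1A_2A_1=A_2A_1A_2$ (this is equation (\ref{eqs1}) rewritten in this notation), while $A^2=I_{n^2}$ gives $A_1^2=A_2^2=I_{n^3}$, since $(A\otimes I_n)^2=A^2\otimes I_n$ and $(I_n\otimes A)^2=I_n\otimes A^2$.

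Next I would rewrite the factors occurring in the parametrized equation. Because $R(x)=I_{n^2}+\alpha x A$, we have $R(x)\otimes I_n=I_{n^3}+\alpha x A_1$ and $I_n\otimes R(x)=I_{n^3}+\alpha x A_2$, and likewise with $x$ replaced by $x+y$ or $y$. Writing $I=I_{n^3}$, the left-hand side then becomes $(I+\alpha x A_1)(I+\alpha(x+y)A_2)(I+\alpha y A_1)$ and the right-hand side becomes $(I+\alpha y A_2)(I+\alpha(x+y)A_1)(I+\alpha x A_2)$.

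I would then expand both products as polynomials in $\alpha$ of degree three and compare coefficients. The constant terms agree trivially, and the coefficients of $\alpha$ on both sides equal $\alpha(x+y)(A_1+A_2)$. For the coefficient of $\alpha^2$, the relations $A_1^2=A_2^2=I$ collapse one monomial on each side to the scalar matrix $xy\,I$, while the surviving terms $x(x+y)A_1A_2+y(x+y)A_2A_1$ coincide on the two sides because the scalar weights match (the scalars $x,y$ commute with everything). The coefficient of $\alpha^3$ is $xy(x+y)A_1A_2A_1$ on the left and $xy(x+y)A_2A_1A_2$ on the right, and these agree exactly by the braid relation $A_1A_2A_1=A_2A_1A_2$, i.e.\ by the fact that $A$ satisfies the QYBE.

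The computation is entirely mechanical once the substitutions are made, so I do not expect a genuine obstacle; the only thing to watch is the bookkeeping of the degree-three expansion. The conceptual content is simply that the cubic terms are exactly where the Yang--Baxter equation for $A$ is needed and the quadratic terms are exactly where $A^2=I$ is used, with all lower-order terms forced to agree by the symmetry between the parameter arrangements $(x,x+y,y)$ and $(y,x+y,x)$. This mirrors the argument on page~296 of \cite{klimyk}.
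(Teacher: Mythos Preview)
Your proof is correct and is essentially the same approach as the paper's: both expand the triple products by distributivity and match terms, using $A^2=I$ for the quadratic cross-terms and the braid relation $A_1A_2A_1=A_2A_1A_2$ for the cubic term. The only difference is bookkeeping: the paper lists the individual cross-terms of the expansion as a sequence of separate identities to be summed, whereas you group the comparison by degree in $\alpha$, which makes it more transparent exactly where each hypothesis enters.
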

\begin{proof} Observe that $I_{n}\otimes I_{n^{2}}=I_{n^{2}}\otimes I_{n}=I_{n^{3}}$. Note that $(\alpha x R\otimes  I_{n})=\alpha x (R\otimes I_{n})$ and 
 hence 
 \[(\alpha xR\otimes  I_{n})(I_{n}\otimes \alpha (x+y)R)(I_{n^{2}}\otimes I_{n})=
(I_{n}\otimes I_{n^{2}})(\alpha (x+y)R\otimes I_{n})(I_{n}\otimes \alpha xR).\]
Similarly,
\[(I_{n^{2}}\otimes  I_{n})(I_{n}\otimes \alpha (x+y)R)(\alpha yR\otimes I_{n})=
(I_{n}\otimes \alpha yR)(\alpha (x+y)R\otimes I_{n})(I_{n}\otimes I_{n^{2}}),\]
\[(\alpha xR\otimes  I_{n})(I_{n}\otimes I_{n^{2}})(\alpha yR\otimes I_{n})=\alpha ^{2}xyI_{n^{3}}=
(I_{n}\otimes \alpha yR)(I_{n^{2}}\otimes I_{n})(I_{n}\otimes \alpha xR).\]
 Observe also that 
$(\alpha xR\otimes  I_{n})(I_{n}\otimes I_{n^{2}})(I_{n^{2}}\otimes I_{n})+
(I_{n^{2}}\otimes  I_{n})(I_{n}\otimes I_{n^{2}})(\alpha yR\otimes I_{n})=
(I_{n}\otimes I_{n^{2}})(\alpha (x+y)R\otimes I_{n})(I_{n}\otimes I_{n^{2}})$
  and 
$(I_{n^{2}}\otimes I_{n})(I_{n}\otimes \alpha (x+y)R)(I_{n^{2}}\otimes I_{n})=
(I_{n}\otimes \alpha yR)(I_{n^{2}}\otimes I_{n})(I_{n}\otimes I_{n^{2}})+
(I_{n}\otimes  I_{n^{2}})(I_{n^{2}}\otimes I_{n})(I_{n}\otimes \alpha xR).$
 We can sum all of these equations. 
The thesis now  follows from the fact that $R$ and $I_{n^{2}}$ are $R$-matrices, and from the fact that the Kronecker product is distributive.
\end{proof} $\square $
\section{ Some methods of constructing locally monomial $R$-matrices}\label{555}

\subsection{ Orbits of set-theoretic solutions and related $R$-matrices}
 
  The following Proposition \ref{3} was inspired by  results on to the second cohomology group of racks \cite{cjks}  (see Proposition $3.8$) and on the second Yang-Baxter cohomology group of left non-degenerate cycle sets \cite{lebed} (see  Lemma 9.17). Notice however that our proposition  also holds for solutions which are not  necessarily left non-degenerate.
 
 Let notation be as in subsections \ref{123} and  \ref{braces}. By $\mathbb C$ we will denote the field of complex numbers.  
\begin{proposition}\label{3} Let $F$ be a field. Let $(X,r)$ be a set-theoretic solution of the quantum Yang-Baxter equation, $\bar {r}:V\otimes V\rightarrow V\otimes V$ be the linearisation of $r$ and let $X_{1}, \ldots , X_{m}\subseteq X$ be such that  
\begin{itemize}
\item $X=\bigcup _{i=1}^{m}X_{i}$ and $X_{i}\cap X_{j}=\emptyset $ for all $i <j\leq m$ and 
 \item  $r(X_{i}, X_{j})= (X_{j}, X_{i})$ for all $i,j\leq m$.
\end{itemize}
Let $0\neq \alpha _{i,j}\in F$  and let $r': V\otimes V\rightarrow V\otimes V$  be the linear mapping  such that 
$r'(x,y)=\alpha _{i,j} \bar {r}(x\otimes y)$ for  $x\in X_{i}$ and $y\in X_{j}$.
 Then  $r'$ satisfies the quantum Yang-Baxter equation.
\end{proposition}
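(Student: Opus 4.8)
The plan is to verify the quantum Yang-Baxter equation for $r'$ directly on basis vectors, separating the scalar contributions from the underlying vector dynamics. First I would observe that, since the sets $X_1,\dots,X_m$ partition $X$, the chosen scalars assemble into a single function $f\colon X\times X\to F$ given by $f(x,y)=\alpha_{i,j}$ whenever $x\in X_i$ and $y\in X_j$, so that $r'(x\otimes y)=f(x,y)\,\bar r(x\otimes y)=f(x,y)\,\sigma_x(y)\otimes\tau_y(x)$. The point is that $f$ depends on $x$ and $y$ only through the blocks containing them.

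The key structural fact I would isolate next is a block-preservation property. The hypothesis $r(X_i,X_j)=(X_j,X_i)$ says precisely that, for $x\in X_i$ and $y\in X_j$, the vector $\sigma_x(y)$ lies in $X_j$ and $\tau_y(x)$ lies in $X_i$. Hence applying $\bar r$ (equivalently $r'$) to a pair of basis vectors whose block labels are $(p,q)$ yields a scalar multiple of a pair whose block labels are $(q,p)$. In other words, under the braiding the block labels evolve exactly like the flip on the index set $\{1,\dots,m\}$, and this evolution is determined entirely by the original labels, independently of the particular vectors involved.

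With this in hand I would apply each side of the QYBE to a basis vector $x\otimes y\otimes z$ whose blocks are $(a,b,c)$, recording stepwise the scalar picked up by each elementary braiding together with the block pair it acts on. On the left-hand side $(r'\otimes I)(I\otimes r')(r'\otimes I)$ the three braidings act successively on block pairs $(a,b)$, then $(a,c)$, then $(b,c)$, so the accumulated scalar is $\alpha_{a,b}\alpha_{a,c}\alpha_{b,c}$; on the right-hand side $(I\otimes r')(r'\otimes I)(I\otimes r')$ they act on $(b,c)$, then $(a,c)$, then $(a,b)$, giving $\alpha_{b,c}\alpha_{a,c}\alpha_{a,b}$. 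Because $F$ is commutative these two products coincide, and because each scalar is governed purely by the original labels (via block-preservation) it can be pulled out in front of the operator. What remains on each side is $\bar r$ composed in the two braid orders, and these agree since $(V,\bar r)$ is itself a solution of the QYBE. Equality on every basis vector then gives the claim.

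The main obstacle—really the only delicate point—is the bookkeeping of the blocks of the intermediate vectors: one must be certain that the scalar collected at each of the three stages is dictated solely by the labels $(a,b,c)$ and not by the specific elements produced along the way, so that it factors cleanly out of the $\bar r$-dynamics. This is exactly what the block-preservation property guarantees. It is also why I would avoid routing the argument through the cocycle identity of Lemma \ref{identity}, whose equivalence with the QYBE is stated only under non-degeneracy; the direct computation requires no such hypothesis, in accordance with the preceding remark that the proposition holds for solutions that need not be left non-degenerate.
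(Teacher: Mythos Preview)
Your argument is correct. The paper's own proof is a one-line citation of Lemma~\ref{identity}: one defines $f(x,y)=\alpha_{i,j}$ for $x\in X_i$, $y\in X_j$, and checks that the block-preservation hypothesis $r(X_i,X_j)=(X_j,X_i)$ forces each of the six factors in the cocycle identity of that lemma to depend only on the original labels $(a,b,c)$, so both sides reduce to $\alpha_{a,b}\alpha_{a,c}\alpha_{b,c}$. Your direct computation on $x\otimes y\otimes z$ is exactly this same scalar bookkeeping, unpacked rather than quoted.

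The one genuine difference is the point you raise at the end: Lemma~\ref{identity} is stated under a non-degeneracy hypothesis, whereas the proposition is advertised (in the paragraph preceding it) as holding without left non-degeneracy. Your direct verification makes no use of non-degeneracy, since the implication ``cocycle identity $\Rightarrow$ QYBE'' is a pure calculation; the paper's citation is formally to a lemma whose stated hypotheses are stronger than needed for that direction. So the two proofs are the same in content, but yours is cleaner on this hypothesis-tracking point.
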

\begin{proof}
 It follows from Lemma \ref{identity} (Lemma $5.7$ in \cite{andrusz}). \end{proof} $\square $

 Example \ref{o} illustrates the use of orbits as sets $X_{i}$, vis-\`a-vis Proposition \ref{3} (the definition of orbits is recalled in  Subsection \ref{1234}).
 
{\em Which subsets of braces give solutions of the  quantum Yang-Baxter equation?} Below we give some examples of such sets and construct braided vector spaces of set-theoretic type on them using Proposition \ref{3}. Some answers to this question can  also be found in \cite{bcj}.
\begin{example}\label{o} {\em Let $R$ be  a finite  left brace  and let $r'$ be the associated Yang-Baxter map (defined as in Section \ref{braces}).  
 For $b\in R$, define $Q_{b}=\{b+ab:a\in R\}.$
 Notice that for $b,c\in R$ either $Q_{b}=Q_{c}$ or $Q_{b}\cap Q_{c}=\emptyset $. 
 Let $b_{1}, b_{2}, \ldots , b_{m}\in R$ be such that  the sets $Q_{b_{1}}, Q_{b_{2}}, \ldots , Q_{b_{m}}$ are pairwise distinct.  
  For $i\leq m$ denote $X_{i}=Q_{b_{i}}$ and 
  $X=\bigcup _{i\leq m}X_{i}$. Then $r'(X_{i}, X_{j})=(X_{j}, X_{i})$ for all $i,j\leq m$ 
  and $X_{i}\cap X_{j}=0$ for $i<j\leq m$. Let $r:X\times X\rightarrow X\times X$ be a restriction of $r'$, then $(X,r)$ is a non-degenerate involutive solution of the QYBE.
  }
 \end{example}
 
\begin{example}\label{graded}{\em  Let $A$ be a finite left brace which is a left nilpotent brace (for example a nilpotent ring) and $r'$ be the associated Yang-Baxter map.
Let $m$ be a natural number such that $A^{m}=0$ and $A^{m-1}\neq 0$. For $i\leq m$ denote 
  $X_{i}=\{x\in A: x\in A^{i}, x\notin A^{i+1}\}$ and  $X=\bigcup _{i\leq m}X_{i}$. Then $r'(X_{i}, X_{j})=(X_{j}, X_{i})$ for all $i,j\leq m$ and $X_{i}\cap X_{j}=0$ for $i<j$ since $A$ is a left nilpotent brace. Let $r:X\times X\rightarrow X\times X$ be a restriction of $r',$ then $(X,r)$ is a non-degenerate involutive solution of the QYBE.} \end{example}  
 
\begin{example}\label{el}{\em Let $R$ be a finite left brace which is a left nilpotent brace (for example a nilpotent ring) and let $r'$ be the associated Yang-Baxter map. For $b\in R^{i}, b\notin R^{i+1}$ define  $Q_{b}=b+R^{i+1}$.  Notice that for $b,c\in R$ either $Q_{b}=Q_{c}$ or $Q_{b}\cap Q_{c}=\emptyset $ since $R$ is a left nilpotent brace. 
 Let $b_{1}, b_{2}, \ldots , b_{m}\in R$ be such that  the sets $Q_{b_{1}}, Q_{b_{2}}, \ldots , Q_{b_{m}}$ are pairwise distinct.  For $i\leq m$ denote $X_{i}=Q_{b_{i}}$ and $X=\bigcup _{i\leq m}X_{i}$. Then $r(X_{i}, X_{j})=(X_{j}, X_{i})$ for all $i,j\leq m$ and $X_{i}\cap X_{j}=0$ for for all $i<j\leq m$. Let $r:X\times X\rightarrow X\times X$ be a restriction of $r',$ then $(X,r)$ is a non-degenerate involutive solution of the QYBE.}
 \end{example}
 The decomposition of a left brace into the Sylow subgroups of its additive group  is important in the construction of simple braces  (see \cite{dbs, simple}). 
 A similar decomposition can  also be used to construct braided vector spaces of set-theoretic type.
\begin{example}\label{syl} 
{\em  Let $A$ be a finite left  brace and $r'$ be the associated Yang-Baxter map (defined as in Section \ref{braces}). Let $A_{1}, \ldots , A_{m}$ be the Sylow subgroups of the additive group of the brace $A$. Let $0\neq \alpha _{i,j}\in \mathbb C$ for $i,j\leq m$.  For each $i\leq m$  let $X_{i}\subseteq A_{i}$ contain all elements of $A_{i}$ except the zero element. Denote $X=\bigcup _{i\leq m}X_{i}$.
 Then $r'(X_{i}, X_{j})=(X_{j}, X_{i})$ for all $i,j\leq m$ and $X_{i}\cap X_{j}=0$ for $i<j$ since $A$ is a left nilpotent brace. Let $r:X\times X\rightarrow X\times X$ be a restriction of $r',$ then $(X,r)$ is a non-degenerate involutive solution of the QYBE.}
\end{example} 
 We get the following corollary from Proposition \ref{3}:
\begin{corollary}
 Let $(X,r)$  and $X_{i}$ be as in Example \ref{o}, Example \ref{graded}, Example \ref{el} or Example \ref{syl}.
  Let $0\neq \alpha _{i,j}\in \mathbb C$ for all $i,j\leq m$. Let  $D=\{d_{x,y}\}_{x, y\in X}$ where $d_{x,y}=\alpha _{i,j}$ for $x\in X_{i}$, $y\in X_{j}$, then $(X, r^{D})$ is a braided vector space of set-theoretic type. 
  If  $\alpha _{i,j}\bar {\alpha} _{i,j}=1$ for all $i,j\leq m$ then the $R$-matrix associated to  $(X, r^{D})$ (as in Definition \ref{monmat}) is a unitary matrix.
\end{corollary}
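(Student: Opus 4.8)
The plan is to split the statement into its two assertions and treat them in turn. For the first assertion---that $(X,r^{D})$ is a braided vector space of set-theoretic type---I would observe that in each of Examples \ref{o}, \ref{graded}, \ref{el} and \ref{syl} the data $(X,r)$ together with the partition $X=\bigcup_{i\leq m}X_{i}$ satisfies exactly the two bullet hypotheses of Proposition \ref{3}: the $X_{i}$ are pairwise disjoint and cover $X$, and $r(X_{i},X_{j})=(X_{j},X_{i})$ for all $i,j\leq m$. Feeding the scalars $\alpha_{i,j}$ into Proposition \ref{3}, the linear map $r'$ with $r'(x,y)=\alpha_{i,j}\,\bar{r}(x\otimes y)$ for $x\in X_{i},\,y\in X_{j}$ satisfies the QYBE. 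The key point is then that this $r'$ coincides with the map $r_{D}$ of Definition \ref{set}: since $d_{x,y}=\alpha_{i,j}$ for $x\in X_{i},\,y\in X_{j}$ and $\bar{r}(x\otimes y)=\sigma_{x}(y)\otimes\tau_{y}(x)$, we have $r_{D}(x\otimes y)=d_{x,y}\,\sigma_{x}(y)\otimes\tau_{y}(x)=\alpha_{i,j}\,\bar{r}(x\otimes y)=r'(x,y)$. Hence $r_{D}$ satisfies the QYBE, which is precisely the defining condition for $(X,r^{D})$ to be a braided vector space of set-theoretic type.

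For the second assertion I would pass to the associated matrix $M$ of Definition \ref{monmat}. Writing the standard basis of $\mathbb{C}^{n^{2}}$ as $\{e_{(i,j)}\}$ indexed by the ordered pairs, the defining rule of Definition \ref{monmat} reads $M\,e_{(i,j)}=d_{(x_{i},x_{j})}\,e_{r(i,j)}$, so that $M$ is obtained from the permutation matrix of $r$ by scaling each nonzero entry by the corresponding $\alpha_{i,j}$. Because each of the four Examples produces a non-degenerate, involutive solution, $r$ is a bijection of $X\times X$; consequently $M$ has exactly one nonzero entry in each row and each column, i.e. $M$ is a monomial (generalized permutation) matrix.

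The unitarity of $M$ then follows from an orthonormality check on its columns. The $(i,j)$-th column is $d_{(x_{i},x_{j})}\,e_{r(i,j)}$, which has norm $|d_{(x_{i},x_{j})}|=|\alpha_{i,j}|=1$ by the hypothesis $\alpha_{i,j}\bar{\alpha}_{i,j}=1$; and for two distinct index pairs the injectivity of $r$ makes the supporting basis vectors $e_{r(i,j)}$ distinct, hence orthogonal. Thus the columns of $M$ form an orthonormal system, so $M^{*}M=I$ and $M$ is unitary. I do not expect a genuine obstacle here: the corollary is essentially a repackaging of Proposition \ref{3} together with the elementary fact that a monomial matrix with unimodular nonzero entries is unitary. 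The only point requiring care is confirming that $r$ really is a bijection on $X\times X$---needed so that $M$ is monomial rather than merely having one nonzero entry per column---but this is guaranteed by the non-degeneracy and involutivity recorded in each of the four Examples.
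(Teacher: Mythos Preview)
Your proposal is correct and matches the paper's approach: the paper states this corollary as an immediate consequence of Proposition~\ref{3}, and your direct column-orthonormality argument for the unitary part is exactly the content of Proposition~\ref{unitary} later in the paper (where it is phrased as $M M^{*}$ being diagonal with entries $|d_{i,j}|^{2}$). One minor simplification: you do not need non-degeneracy to conclude that $r$ is a bijection on $X\times X$, since involutivity alone (indeed, the very definition of a set-theoretic solution in Section~\ref{1234}) already forces $r$ to be bijective.
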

 
 \subsection{Some other methods of constructing braided vector spaces of set-theoretic type}\label{6}
 
Notice that a braided vector spaces of set-theoretic type (BVST) can be obtained by a diagonal similarity (as in
the case of trivial  BVST in Section \ref{123}) and by the decomposition of a set-theoretic
solution into invariant subsets as in  Proposition \ref{3}. 
In this section  we give some examples of braided vector spaces of set-theoretic type which cannot be obtained by combining these two methods. All examples in this subsection satisfy the assumptions 
 of Lemma \ref{inna} and  therefore can be used to construct extensions of set-theoretic solutions as in Proposition $2.9$ 
  in \cite{car}.
 \begin{example}\label{hura5}  
{\em  Let $X=\{[i, j]: i, j\in {\mathbb {Z}\over 2\cdot \mathbb {Z}} \}$, and 
 define $r:X\otimes X\rightarrow X\otimes X$ as 
\[r([i,j], [m,n])=([m+1, n+m+i], [i+1, j+i+m]).\]
 Let $g: {\mathbb {Z}\over 2\cdot \mathbb {Z}} \times {\mathbb {Z}\over 2\cdot \mathbb {Z}} \rightarrow \mathbb C$  be an  arbitrary function with non-zero values such that $g(1,0)=g(0,1)$.    
Define $f([i,j], [m,n])=g( i+j+n, m+j+n).$  
 Let $D=\{d_{x,y}\}_{x,y\in X}$ be such that $d_{x,y}=f(x,y)$ for all $x,y\in X$; then  $(X, r^{D})$ is a braided vector space of set-theoretic type. Since $|X|=4$, this example yields a $16\times 16$ $R$-matrix, given in  Example \ref{A2}, which  can  be verified by hand. }
\end{example}
 
\begin{example}\label{simple}
 {\em Let $X={\mathbb {Z}\over n\cdot \mathbb {Z}}$ and $r(x,y)=(y+1, x-1),$ for $x,y\in X$. Observe that $(X,r)$ is a non-degenerate, involutive, indecomposable set-theoretic solution of the QYBE. 
 Let $g:{\mathbb {Z}\over n\cdot \mathbb {Z}}\rightarrow \mathbb C$ be any mapping with nonzero values, and define \[d_{i,j}=f(i,j)=g(i-j)\] for $i,j\in {\mathbb {Z}\over n\cdot \mathbb {Z}}$. We will show that $(X, r^{D})$ is a braided vectors space of set-theoretic type, where $D=\{d_{i,j}\}_{i,j\in  {\mathbb {Z}\over n\cdot \mathbb {Z}}}$. 
  Notice that $f$ satisfies  the assumptions of Lemma \ref{inna}, since 
  $f(x,y)= g(x-y)=f(x-1, y-1)=f(x^{{ }^yz}, y^{z})$,   $f(x^{y}, z)=f(x-1,z)=g(x-1-z)=f(x, z+1)= 
f(x, { }^yz)$, 
$f({{ }^xy},  { }^{x^{y}}z)=f(y+1, z+1)=g(y-z)=f(y,z).$
 
We can assume that $g(1)^{2}\neq g(0)g(0)$, 
 then $f(1,0)f(1,0)\neq f(0,0)f(1, 1)$, and by Lemma \ref{trivial}  $(X, r^{D})$ is a   a braided vector space of set-theoretic type, moreover $(X,r^{D})$ is  non-trivial. This example is illustrated by Examples \ref{A1} and \ref{A3}.}
\end{example}
 Example \ref{simple} yields the following corollary.
\begin{corollary}\label{niunius}
 For every natural number $n>1$ there exists a non-degenerate, involutive,  indecomposable solution $(X,r)$ of the QYBE of cardinality $n$ with a non-trivial braided vectors space of set-theoretic type $(X, r^{D})$, for some $D$. 
\end{corollary}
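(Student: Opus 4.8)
The plan is to read off Corollary \ref{niunius} directly from Example \ref{simple} together with the structural facts already assembled in the paper. The statement asserts, for each $n>1$, the existence of a solution $(X,r)$ that is simultaneously (i) non-degenerate, (ii) involutive, (iii) indecomposable, of cardinality $n$, and carrying (iv) a braided vector space of set-theoretic type $(X,r^{D})$ that is (v) non-trivial. Example \ref{simple} furnishes an explicit candidate: take $X={\mathbb Z}/n{\mathbb Z}$ with $r(x,y)=(y+1,x-1)$, and the data $D=\{d_{i,j}\}$ given by $d_{i,j}=g(i-j)$ for a chosen $g$ with nonzero values. So the task reduces to verifying the five listed properties for this concrete pair, and the bulk of the work has already been done inside Example \ref{simple}.

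First I would record that $(X,r)$ with $r(x,y)=(y+1,x-1)$ is the cyclic permutation solution on ${\mathbb Z}/n{\mathbb Z}$; here $\sigma_x(y)=y+1$ and $\tau_y(x)=x-1$, both of which are bijections on $X$, giving non-degeneracy, and a direct check (or the observation already stated in Example \ref{simple}) gives $r^2=\mathrm{id}$, hence involutivity. Cardinality is $n$ by construction. For indecomposability I would invoke the characterisation recalled in Subsection \ref{1234} (from \cite{etingof}): a finite non-degenerate solution is indecomposable iff $X$ cannot be split into two nonempty invariant pieces $Y_1,Y_2$ with $r(Y_i,Y_i)=(Y_i,Y_i)$. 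Since $\sigma_x$ is the single shift $y\mapsto y+1$ acting transitively on ${\mathbb Z}/n{\mathbb Z}$, no nontrivial proper subset is closed under it, so no such decomposition exists; this is exactly the claim asserted (without proof) in Example \ref{simple}, and I would supply the one-line transitivity argument.

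Next I would establish the braided-vector-space-of-set-theoretic-type property for $D$. This is already carried out in Example \ref{simple}: the function $f(i,j)=g(i-j)$ is shown to satisfy the three hypotheses of Lemma \ref{inna}, namely $f(x,y)=f(x^{{ }^yz},y^z)$, $f(x^y,z)=f(x,{ }^yz)$, and $f({ }^xy,{ }^{x^y}z)=f(y,z)$, via the stated shift computations. By Lemma \ref{inna} this immediately yields that $(X,r^{D})$ is a braided vector space of set-theoretic type. For non-triviality I would again follow Example \ref{simple}: choose $g$ so that $g(1)^2\neq g(0)g(0)$ (possible for any $n>1$ since $g$ is an arbitrary nonzero-valued function and the classes $0,1\in{\mathbb Z}/n{\mathbb Z}$ are distinct when $n>1$), which forces $f(1,0)f(1,0)\neq f(0,0)f(1,1)$; since $r(1,0)=({ }^10,1^0)=(1,0)$ one has $f(1,0)f(r(1,0))$ not constant, so the second clause of Lemma \ref{inna} applies and gives non-triviality.

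Assembling these observations proves the corollary. The only genuine content beyond quoting Example \ref{simple} is the indecomposability verification and the remark that $g(1)^2\neq g(0)g(0)$ is achievable precisely because $n>1$ makes $0$ and $1$ distinct residues; I expect these to be the two points needing explicit attention, and neither is a real obstacle. The main conceptual point is simply that Example \ref{simple} was engineered to produce exactly an indecomposable solution of every cardinality $n$, so the corollary is a faithful restatement of what that example demonstrates; the proof is therefore a short matter of collecting the verified properties rather than a new argument.
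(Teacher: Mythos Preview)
Your approach matches the paper's exactly: the corollary is recorded immediately after Example \ref{simple} as a direct consequence of it, and your proposal simply makes the implicit verifications explicit (non-degeneracy, involutivity, the transitivity argument for indecomposability, and the appeal to Lemma \ref{inna}).

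One small correction in the non-triviality step: for $n>2$ the pair $(1,1)$ is \emph{not} $r(0,0)$, since $r(0,0)=(1,n-1)$. Hence the inequality $f(1,0)^{2}\neq f(0,0)f(1,1)$ does not by itself show that $(x,y)\mapsto f(x,y)f({}^{x}y,x^{y})$ is non-constant, which is what the second clause of Lemma \ref{inna} asks for. Computing directly, $f(x,y)f(r(x,y))=g(x-y)\,g\bigl(2-(x-y)\bigr)$, so the two values to compare are $g(1)^{2}$ (at $x-y=1$) and $g(0)g(2)$ (at $x-y=0$); the correct condition on $g$ is therefore $g(1)^{2}\neq g(0)g(2)$, which is just as easy to arrange. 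For $n=2$ this coincides with your stated condition since $2\equiv 0$. (Example \ref{simple} in the paper contains the same imprecision, so you inherited it; the fix is immediate and the rest of your argument stands.)
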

\subsection{Unitary $R$-matrices obtained by $I$-retraction}

  Recall that a subset $I$ of left brace $A$  is an ideal if $x+y\in I$ and $z\cdot x, z\cdot x\in I$ for all $x, y\in I$ and $z\in A$, see \cite{rump}. For $x\in X$ let $[x]_{I}=\{x+i: i\in I\}$.
 
 On page 160 in \cite{rump}, Rump  showed that if $I$ is an ideal in a brace $R$ then the factor brace $R\over I$ is well defined.  
 We will use this fact  to introduce the $I$-retraction; 
  the $I$-retraction can be viewed as a generalisation of the retraction technique. We will use the $I$-retraction for construction of locally monomial BVS.
\begin{proposition}\label{ideal}[ $I$-retraction ]
Let $A$ be a left brace and  let $(X,r)$ be a set-theoretic solution of the QYBE such that $X\subseteq A$ and $r$ is a restriction of the Yang-Baxter map associated to $A$. 
  Let $I$ be an ideal in $A$, and denote $[x]_{I}=\{x+i: i\in I\}$ for $x\in X$. Define the 
 relation $\sim $ on sets $[x]_{I}$ by saying that $[x]_{I} \sim  [y]_{I}$ if and only if $x-y\in I$.  Then $\sim $ is an equivalence relation. Let $X_{I}$ be the set of equivalence classes of this relation. Denote 
$r(x,y)=({ }^xy, x^{y})$ for $x,y\in X$. Define 
 $\tilde {r}([x]_{I},[y]_{I})=([{ }^xy]_{I}, [x^{y}]_{I})$.
 Then $(X_{I}, \tilde {r})$ is a non-degenerate involutive set-theoretic solution of the QYBE. 
\end{proposition}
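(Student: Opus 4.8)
The plan is to identify $(X_I,\tilde r)$ with the restriction of the Yang--Baxter map of the factor brace $A/I$ to the image of $X$, and then to read off each asserted property from that brace. I would first clear away the elementary points. Because $I$ is a subgroup of $(A,+)$, the relation $x-y\in I$ is reflexive, symmetric and transitive, so $\sim$ is an equivalence relation; in fact $[x]_I=x+I$ is exactly the additive coset of $x$, so $[x]_I\sim[y]_I$ holds precisely when $[x]_I=[y]_I$, and $X_I$ may be identified with the image $\pi(X)$ of $X$ under the canonical homomorphism $\pi\colon A\to A/I$ onto the factor brace, which exists by Rump's result \cite{rump}.

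The crux is the well-definedness of $\tilde r$. Writing the associated Yang--Baxter map as $r'(x,y)=(x\cdot y+y,\ z\cdot x+x)$, with $z$ the unique element solving $z\cdot(x\cdot y+y)+z+x\cdot y+y=0$, I would note that every operation appearing here is a brace operation. Applying the brace homomorphism $\pi$ therefore yields $(\pi\times\pi)\,r'(x,y)=\bar r'(\pi(x),\pi(y))$, where $\bar r'$ denotes the Yang--Baxter map of $A/I$; here one checks that $\pi(z)$ is forced to be the unique solution of the corresponding equation in $A/I$. It follows that $[{}^x y]_I=\pi(\sigma_x(y))$ and $[x^y]_I=\pi(\tau_y(x))$ depend only on $\pi(x)$ and $\pi(y)$, hence only on the classes $[x]_I$ and $[y]_I$, which is exactly what well-definedness of $\tilde r$ asserts. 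This single computation also identifies $\tilde r$ with the restriction of $\bar r'$ to $X_I\times X_I$, a set that is $\tilde r$-invariant because $r(X\times X)\subseteq X\times X$ forces $[{}^x y]_I,[x^y]_I\in X_I$. I expect this to be the main obstacle, since it is the one place where the ideal hypothesis $A\cdot I,\,I\cdot A\subseteq I$ is genuinely used, and the compatibility of the auxiliary element $z$ with $\pi$ needs care.

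It then remains to transport three properties across this identification. The map $\bar r'$ satisfies the QYBE and is non-degenerate and involutive, being the Yang--Baxter map of the brace $A/I$ (Theorem \ref{1255}); hence its restriction $\tilde r$ to the invariant subset $X_I\times X_I$ again satisfies the QYBE, and $\tilde r^2=\mathrm{id}$, so $\tilde r$ is in particular a bijection of $X_I\times X_I$. For non-degeneracy I would verify that each $\tilde\sigma_{[x]}$ and $\tilde\tau_{[x]}$ is a permutation of $X_I$: injectivity is inherited from the corresponding bijections of $A/I$ (equivalently from the group $(A/I,\circ)$), while surjectivity onto $X_I$ is automatic when $X$ is finite, an injective self-map of a finite set being onto, and in general follows from the non-degeneracy of $(X,r)$. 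Collecting these, $(X_I,\tilde r)$ is a non-degenerate, involutive set-theoretic solution of the QYBE.
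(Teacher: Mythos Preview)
Your argument is correct and is exactly the approach the paper takes: the paper's entire proof is the single sentence ``This follows from the result that the factor brace $A/I$ is well defined, which was proved by Rump,'' and you have simply unpacked what that sentence means, namely that $\tilde r$ is the restriction to $\pi(X)$ of the Yang--Baxter map of $A/I$ and therefore inherits the QYBE, involutivity, and non-degeneracy. Your treatment is considerably more detailed than the paper's, in particular your explicit check that the auxiliary element $z$ behaves well under $\pi$ and your remarks on surjectivity for non-degeneracy, but the underlying idea is identical.
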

\begin{proof}  This follows from the result that the factor brace $A/I$ is well defined, which was proved by Rump (page 160, end of section $2$ in \cite{rump}).
\end{proof} $\square $

Recall that factors of $A/I$ are well defined for any skew brace $A$ and any ideal $I$ in $A$, see \cite{gv}, therefore  Remark \ref{ideal} can be generalised for non-degenerate solutions which can be embedded in skew braces.

\begin{lemma}\label{retr2}
 Let $(X,r)$ be a solution of a finite multipermutation level. If $([X]_{I}, r_{I})$ is an $I$-retraction of solution $(X,r)$,  then $([X]_{I}, r_{I})$ is a solution of a finite multipermutation level and $mpl ([X]_{I}, r_{I})\leq mpl(X,r)$.   
\end{lemma}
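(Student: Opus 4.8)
The plan is to exhibit $([X]_I, r_I)$ as an epimorphic image of $(X,r)$ and then to argue that passing to such an image cannot raise the multipermutation level. Consider the natural projection $\pi : X \to [X]_I$ defined by $\pi(x) = [x]_I$. By the very definition of $r_I = \tilde{r}$ in Proposition \ref{ideal}, namely $\tilde{r}([x]_I,[y]_I) = ([{ }^xy]_I,[x^{y}]_I)$, we have $r_I(\pi(x),\pi(y)) = (\pi({ }^xy),\pi(x^{y})) = (\pi\times\pi)\,r(x,y)$, so $\pi$ is a surjective morphism of set-theoretic solutions, and this verification is essentially immediate.

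The key step is the following claim: any surjective morphism of solutions $\pi : (X,r)\to(Y,s)$ induces a surjective morphism $\bar{\pi} : Ret(X,r)\to Ret(Y,s)$ given by $\bar{\pi}([x]) = [\pi(x)]$. To see that $\bar{\pi}$ is well defined, suppose $x\sim x'$ in $X$, i.e. $\sigma_x=\sigma_{x'}$. Reading off the $\sigma$-component of the morphism condition as $\pi(\sigma_x(y)) = \sigma_{\pi(x)}(\pi(y))$ for all $y$, we obtain $\sigma_{\pi(x)}(\pi(y)) = \pi(\sigma_x(y)) = \pi(\sigma_{x'}(y)) = \sigma_{\pi(x')}(\pi(y))$; since $\pi$ is surjective, $\pi(y)$ ranges over all of $Y$, whence $\sigma_{\pi(x)} = \sigma_{\pi(x')}$ and $\pi(x)\sim\pi(x')$ in $Y$. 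That $\bar{\pi}$ is itself a morphism of solutions, and is surjective, then follows by a direct computation using the morphism property of $\pi$ and the formula $r^{\sim}([x],[y]) = ([\sigma_x(y)],[\tau_y(x)])$ for the retracted map.

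Finally I would iterate this claim. Writing $Ret^{k}$ for the $k$-fold iterated retraction, applying the claim $k$ times to $\pi$ produces, for every $k$, a surjective morphism $Ret^{k}(X,r)\to Ret^{k}([X]_I, r_I)$. If $mpl(X,r) = m$, then by definition $Ret^{m}(X,r)$ is a one-element solution; a surjection from a one-element set forces $Ret^{m}([X]_I, r_I)$ to be a one-element solution as well, so $([X]_I, r_I)$ has finite multipermutation level with $mpl([X]_I, r_I)\leq m = mpl(X,r)$.

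The main point requiring care is the well-definedness of $\bar{\pi}$ on retracts, where surjectivity of $\pi$ is genuinely needed in order to pass from equality of $\sigma_{\pi(x)}$ and $\sigma_{\pi(x')}$ on the image of $\pi$ to equality on all of $Y$. The non-degeneracy of the intermediate solutions, which is required for each retraction to be defined, is supplied by Proposition \ref{ideal} together with the standard fact that retracts of non-degenerate solutions are again non-degenerate.
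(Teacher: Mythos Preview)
Your argument is correct. You exhibit $([X]_I,r_I)$ as a surjective morphic image of $(X,r)$, prove that a surjective morphism of solutions descends to a surjective morphism of retractions, and then iterate; each step is sound, and your remark that surjectivity of $\pi$ is genuinely needed for the well-definedness of $\bar\pi$ is to the point.

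The paper proceeds differently. Rather than working directly with the retraction tower, it invokes Proposition~4.7 of \cite{tatiana}, which characterises multipermutation level~$\leq m$ by a universal identity in the variables $a,b,y_1,\ldots,y_m$ built from iterated applications of $\sigma$. Since $r_I$ is defined by $r_I([x]_I,[y]_I)=([{}^xy]_I,[x^y]_I)$, any such identity holding for all tuples in $X$ automatically holds for all tuples of classes in $[X]_I$; this gives $mpl([X]_I,r_I)\leq mpl(X,r)$ in one line. The two routes rest on the same underlying fact---that the canonical projection is a morphism of solutions---but package it differently: the paper's version is shorter once one has the external characterisation at hand, whereas your version is self-contained and makes explicit the functoriality of retraction with respect to surjective morphisms, a statement of independent interest that the paper does not isolate.
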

 \begin{proof} It follows from Proposition $4.7$ \cite{tatiana}, since if the property from Proposition $4.7$  \cite{tatiana} holds for all $a,b, y_{1}, \ldots , y_{m}\in X$ then it  holds for all  $[a]_{I}, [b]_{I}, [y_{1}]_{I}, $ $\ldots , [y_{m}]_I\in [X]_{I}.$    
\end{proof} $\square $

 The proof of  the following proposition follows from Lemma $5.7$ in \cite{andrusz} (see Lemma \ref{identity} in Section \ref{6}).
\begin{proposition}\label{last} Let $A$ be a left brace, $I$ be an ideal in $A$ and $(X,r)$, $(X_{I}, \tilde {r})$ be as in  
  Proposition \ref{ideal}.  Let $ {D}=\{d_{[x]_{I}, [y]_{I}}\}_{[x]_{I}, [y]_{I}\in X_{I}}$  for some $d_{[x]_{I}, [y]_{I}}\in \mathbb C$.  Define ${D}'=\{d'_{x,y}\}_{x,y\in X}$, where $d'_{x,y}=d_{[x]_{I}, [y]_{I}}$ for all $x,y\in X$. 
 If $(X_{I}, \tilde {r}^{D})$ is a braided vector space of set-theoretic type then  $(X, r^{{D}'})$ is a braided vector space of set-theoretic type.
\end{proposition}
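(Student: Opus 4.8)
The plan is to show that the function $d'$ defined by pulling back $d$ along the quotient map satisfies the functional equation of Lemma~\ref{identity} on $(X,r)$, given that $d$ satisfies it on $(X_I,\tilde r)$. Since Lemma~\ref{identity} characterises braided vector spaces of set-theoretic type exactly by this identity, this is precisely what needs to be verified. So first I would set $f([x]_I,[y]_I)=d_{[x]_I,[y]_I}$ and $f'(x,y)=d'_{x,y}=f([x]_I,[y]_I)$, and record that by hypothesis $(X_I,\tilde r^{D})$ is of set-theoretic type, so by Lemma~\ref{identity} the mapping $f$ satisfies, for all $[x]_I,[y]_I,[z]_I\in X_I$,
\[
f([x]_I,[y]_I)\,f([x]_I^{[y]_I},[z]_I)\,f({}^{[x]_I}[y]_I,{}^{[x]_I^{[y]_I}}[z]_I)
= f([y]_I,[z]_I)\,f([x]_I,{}^{[y]_I}[z]_I)\,f([x]_I^{{}^{[y]_I}[z]_I},[y]_I^{[z]_I}).
\]

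The key observation, which does all the work, is that the maps $\sigma$ and $\tau$ descend compatibly through the $I$-retraction: by the definition of $\tilde r$ in Proposition~\ref{ideal} we have ${}^{[x]_I}[y]_I=[{}^xy]_I$ and $[x]_I^{[y]_I}=[x^y]_I$. Hence every compound expression appearing as an argument of $f$ in the displayed identity on $X_I$ is exactly the $I$-class of the corresponding compound expression in $X$; for instance ${}^{[x]_I^{[y]_I}}[z]_I=[{}^{x^y}z]_I$ and $[x]_I^{{}^{[y]_I}[z]_I}=[x^{{}^yz}]_I$. Therefore, substituting $f'(a,b)=f([a]_I,[b]_I)$ term by term, the identity for $f$ on $X_I$ translates verbatim into the identity
\[
f'(x,y)\,f'(x^y,z)\,f'({}^xy,{}^{x^y}z)=f'(y,z)\,f'(x,{}^yz)\,f'(x^{{}^yz},y^z)
\]
for all $x,y,z\in X$, which is exactly the hypothesis of Lemma~\ref{identity} for the solution $(X,r)$ with the function $f'$. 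Since $f'$ takes only nonzero values (because $f$ does, as $(X_I,\tilde r^{D})$ is a BVST) and $(X,r)$ is non-degenerate, Lemma~\ref{identity} then yields that $(X,r^{D'})$ is a braided vector space of set-theoretic type, as claimed.

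I expect the only genuine point requiring care to be the bookkeeping that the two structural identities ${}^{[x]_I}[y]_I=[{}^xy]_I$ and $[x]_I^{[y]_I}=[x^y]_I$ propagate correctly through the nested expressions, so that each of the six factors on each side matches. This is precisely where the well-definedness of the factor brace $A/I$ (Rump, page~160, invoked in Proposition~\ref{ideal}) is used: it guarantees that applying $\sigma_x,\tau_y$ and then passing to $I$-classes agrees with first passing to classes and applying $\sigma_{[x]_I},\tau_{[y]_I}$, so the substitution is consistent and independent of the chosen representatives. No nontrivial estimate or construction is needed beyond this compatibility; the proof is essentially a transport of the Lemma~\ref{identity} identity from $X_I$ back to $X$ via the quotient map.
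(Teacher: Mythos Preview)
Your proposal is correct and follows exactly the approach indicated in the paper, which simply states that the result follows from Lemma~\ref{identity} (Lemma~5.7 in \cite{andrusz}). You have supplied the details the paper omits: namely, that the compatibility ${}^{[x]_I}[y]_I=[{}^xy]_I$ and $[x]_I^{[y]_I}=[x^y]_I$ from Proposition~\ref{ideal} transports the functional identity of Lemma~\ref{identity} from $X_I$ back to $X$.
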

 Notice that Proposition \ref{last} also holds  if instead of $I$-retraction we consider the classical retraction (for some related results on extensions and retraction  of cycle sets see Proposition 10 \cite{lebed}).

\begin{example}\label{ring} {\em Let $n\geq 2$ be a natural number. 
  Let $F$ be the  two-elements field and let $A$ be the free (non-unital)  $F$-algebra generated by one generator $x$ subject to relation $x^{n}$, then $A$ is a nilpotent algebra.    Let $X=\{x+xf: f\in A\}$, and let $r$ be a restriction of the Yang-Baxter map associated to $A$. 
  Let $g: {\mathbb {Z}\over 2\cdot \mathbb {Z}} \times {\mathbb {Z}\over 2\cdot \mathbb {Z}} \rightarrow \mathbb C$  be an  arbitrary function with non-zero values such that $g(1,0)=g(0,1)$.    

Let $g,h\in A, i,j,m,n\in {\mathbb {Z}\over 2\cdot \mathbb {Z}}$. For $u=x+ ix^{2}+jx^{3}+gx^{3}$ and $v=x+ mx^{2}+nx^{3}+hx^{3}$
 define $f(u, v)=g( i+j+n, m+j+n).$  
 Define $D=\{d_{u,v}\}_{u,v\in X}$ as $d_{u,v}=f(u,v)$, then  
$(R, r^{D})$ is a braided vector space of set-theoretic type. 
 It follows from  Proposition \ref{last}  because Example \ref{hura5} is  the $I$-retraction of Example \ref{ring} for  $I= A^{4}$. }
\end{example}
\begin{example}\label{ring2}
{\em Let $X, A, r, n$ be as in Example \ref{ring}.  
   Let $g:{\mathbb {Z}\over 2\cdot \mathbb {Z}}\rightarrow \mathbb C$ be any mapping with nonzero values.
 Let $g,h\in A, i,j\in {\mathbb {Z}\over 2\cdot \mathbb {Z}}$. For $u=x+ ix^{2}+x^{2}g$ and $v=x+ jx^{2}+hx^{2}$
   define $f(u, v)=g(i-j).$      
 Define $D=\{d_{u,v}\}_{u,v\in X}$ as $d_{u,v}=f(u,v)$, then  
$(R, r^{D})$ is a braided vectors space of set-theoretic type. 
 It follows from  Proposition \ref{last}  because Example \ref{simple}, with $p=2$, is  the $I$-retraction of Example \ref{ring2} for  $I= A^{3}$. }
\end{example}
\section{Indecomposable solutions}\label{ind}

 In \cite{etingof}, Etingof, Schedler and Soloviev showed  that each  involutive non-degenerate indecomposable solution of cardinality $p$, for each prime number  $p$, is isomorphic to the permutation solution $(X,r)$ where  $X={\mathbb {Z}\over p\cdot \mathbb {Z}}$ and $r(x,y)=(y-1, x+1)$. 
  In \cite{guralnick} Etingof, Guralnick and Soloviev showed that all non-degenerate indecomposable solutions of the QYBE whose cardinality is a prime number are affine.
  
   The purpose of this section is to show that for other cardinalities the situation is more complicated. 
  We will introduce one-generator braces to show that every finite one-generator brace yields an indecomposable solution of the QYBE:
  \begin{definition}\label{onegenerator}[one-generator brace]
 Let $A$ be a left  brace and $x\in A$, and by $A(x)$ we will denote the smallest left  brace which contains $x$ (under the same operations  of $+$ and $\circ $  as in $A$). If $A=A(x)$ for some $x\in A$ then we say that $A$ is a left  brace generated by one element $x$, or a {\em one-generator left brace}.
 Notice that if $A$ is a finite brace, then every element from $A(x)$ can be obtained by applying several times operations $+$ and $\circ $ to element $x$; since inverses of elements in groups $(G, +)$ and $(G, \circ )$ are powers of these elements since these groups are finite.  
  \end{definition}
   
   Notice that one-generator braces which are Jacobson radical rings are commutative. The following examples show that one-generator braces are abundant and need not be not commutative:  
  
  \begin{example}
   Let $(S, \bullet , + )$ be  the brace constructed  in Proposition $2.24$ in \cite{sv},  then any element in  $S$ generates a finite one-generator brace.  Moreover, the obtained  one-generator braces have a finite multipermutation level and usually are not commutative. 
     \end{example}
   Recall that a group $A$ factorises  through two subgroups $B, C$ is $A=BC=\{bc: b\in B, c\in C\}$. The factorisation is exact if $B\cap C=1$. The following example is inspired by Theorem $2.3$  and Proposition  $2.24$ from \cite{sv} but has a different additive group. This example is useful for constructing examples of one-generator braces. 
   Recall that for a ring $(R, +, \cdot )$ operation $\circ $ is defined as $a\circ b=a+b+a\cdot b$ for $a,b\in R$. 
  \begin{proposition} Let $N$ be a nilpotent ring whose group $(N, \circ )$
 admits  an exact factorisation through two subgroups $B$ and $C$, so $N=B\circ C$.
    Then  $N$ with the binary operations $+$ and $\odot $ is a brace where 
    $+$ is the usual addition in the ring $N$  and $\odot $ is defined for $a, a'\in N$ as 
$a\odot a' =b\circ a'\circ c$, where $a=b\circ c$ with $b\in B$, $c\in C$.
  \end{proposition}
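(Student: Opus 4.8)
The plan is to verify directly that $(N,+,\odot)$ satisfies the axioms of a left brace in the form of the definition from \cite{cjo}: since $(N,+)$ is already an abelian group (it is the additive group of the ring $N$), it remains to show that $(N,\odot)$ is a group and that the compatibility $x\odot(y+z)+x=x\odot y+x\odot z$ holds for all $x,y,z\in N$. Throughout I would use that $(N,\circ)$ is a group (this is the standard fact, recalled in Section \ref{braces}, that a nilpotent ring is a brace, so $0$ is its identity and $a\circ b=a+b+a\cdot b$), and that exactness of the factorisation $N=B\circ C$ means every $a\in N$ has a \emph{unique} expression $a=b\circ c$ with $b\in B$, $c\in C$; this uniqueness is what makes $\odot$ well defined.

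First I would establish that $(N,\odot)$ is a group, and the cleanest route is to exhibit it as an isomorphic copy of a direct product. Writing $a=b_1\circ c_1$ and $a'=b_2\circ c_2$, one computes $a\odot a'=b_1\circ a'\circ c_1=(b_1\circ b_2)\circ(c_2\circ c_1)$; since $B,C$ are subgroups of $(N,\circ)$ we have $b_1\circ b_2\in B$ and $c_2\circ c_1\in C$, so by uniqueness this is exactly the $B$-$C$ factorisation of $a\odot a'$. Hence the bijection $\phi\colon N\to B\times C$, $\phi(b\circ c)=(b,c)$, carries $\odot$ to the operation $(b_1,c_1)(b_2,c_2)=(b_1\circ b_2,\,c_2\circ c_1)$, which is the group law of $B\times C^{\mathrm{op}}$. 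Thus $\phi$ is an isomorphism of $(N,\odot)$ onto a genuine group, so $(N,\odot)$ is a group with identity $0$, and the $\odot$-inverse of $b\circ c$ is $b^{-1}\circ c^{-1}$ (with $b^{-1},c^{-1}$ the $\circ$-inverses). Associativity, the identity and inverses can all alternatively be checked by hand using only uniqueness of factorisation, but the product description makes them transparent.

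The heart of the argument is the compatibility axiom, and the key observation is that for fixed $x=b\circ c$ the map $w\mapsto b\circ w\circ c$ is \emph{affine} in $w$. Expanding $\circ$ through the ring operations gives
\[
b\circ w\circ c = (b+c+b\cdot c)+\bigl(w+b\cdot w+w\cdot c+b\cdot w\cdot c\bigr)=x+T(w),
\]
where $T(w)=w+b\cdot w+w\cdot c+b\cdot w\cdot c$ and $b+c+b\cdot c=b\circ c=x$. By left and right distributivity of the ring multiplication over $+$, the map $T$ is additive, i.e. $T(y+z)=T(y)+T(z)$. Consequently $x\odot(y+z)+x=x+T(y+z)+x=2x+T(y)+T(z)$, while $x\odot y+x\odot z=(x+T(y))+(x+T(z))=2x+T(y)+T(z)$, where the commutativity of $+$ is used to collect the two copies of $x$; the two sides agree, proving the compatibility. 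Equivalently, one checks that the associated multiplication $x\ast y:=x\odot y-x-y=b\cdot y+y\cdot c+b\cdot y\cdot c$ is left distributive, matching Rump's definition.

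I expect the main obstacle to be bookkeeping rather than anything deep: one must be careful that the factorisation of a product reverses the order in the $C$-component (so that $\odot$ lands in $B\times C^{\mathrm{op}}$ and not a naive direct product), and that exactness is genuinely invoked both for the well-definedness of $\odot$ and for identifying the factorisation of $a\odot a'$. The brace compatibility itself is then forced by nothing more than the ring distributive laws together with commutativity of $+$.
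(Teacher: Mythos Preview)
Your proof is correct and follows essentially the same approach as the paper: both verify the brace compatibility axiom by expanding $b\circ w\circ c$ with the ring distributive laws (your affine decomposition $x+T(w)$ with $T$ additive is just a cleaner packaging of the paper's explicit sum $2b+2c+2bc+e+f+be+bf+ec+fc+bec+bfc$). The only difference is that the paper cites Theorem~2.3 of \cite{sv} for $(N,\odot)$ being a group, whereas you supply the standard direct argument via the bijection to $B\times C^{\mathrm{op}}$; this makes your version self-contained but is not a different route.
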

  \begin{proof}
    It was shown in Theorem $2.3$ \cite{sv} that $(N, \odot)$ is a group. 
     Observe that for $a, e, f\in N$ with $a=b\circ c$  with $b\in B$, $c\in C$ we have
     $ a\odot (e+f)+a=b\circ (e+f)\circ c+a=2b+2c+2bc+e+f+be+bf+ec+fc+bec+bfc=
     b\circ (e)\circ c+b\circ (f)\circ c =a\odot e+a\odot f.$
    Therefore $(N, +, \odot)$ is a brace.
\end{proof} $\square $
\begin{theorem}\label{567} 
 Let $(X,r)$ be a finite  non-degenerate, involutive solution of the QYBE. The following are equivalent:
\begin{itemize}
\item[1.] $(X,r)$ is an indecomposable solution.
\item[2.]  There exist  a brace $A$ and $x\in A$ such that
$X=\{x+ax:a\in A\}$ and every element of $A$ is a sum of elements from $X$. 
Moreover $r$ is a restriction of the Yang-Baxter map associated to $A$.
\end{itemize}
\end{theorem}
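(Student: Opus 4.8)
The plan is to run both implications through Rump's correspondence (Theorem \ref{1255}) together with its finite refinement (Proposition \ref{1256}), using throughout the identification of $\{x+ax:a\in A\}$ with a single orbit of the permutation action. Recall that when $(X,r)$ comes from a brace one has $\sigma_x(y)=\lambda_x(y):=x\cdot y+y$, and that $\lambda$ is a homomorphism $(A,\circ)\to\mathrm{Aut}(A,+)$, so $\lambda_a\lambda_b=\lambda_{a\circ b}$. Writing $Q_x=\{x+ax:a\in A\}=\{\lambda_a(x):a\in A\}$, the sets $Q_b$ partition $A$ and $Q_x$ is exactly the orbit of $x$ under $\{\lambda_a\}$. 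Since by Theorem \ref{1255} the right component is $\tau_y(x)=z\cdot x+x=\lambda_z(x)$, both $\sigma$ and $\tau$ preserve these orbits; using involutivity one also checks $\tau_y(x)=\lambda_{\sigma_x(y)^{-1}}(x)$, so every $\tau$-move is realised by a $\lambda$, and the orbit of $x$ in the sense of Subsection \ref{1234} coincides with the orbit of the permutation group $\mathcal G=\langle\lambda_u:u\in X\rangle$. With the paper's definition of decomposability this gives the guiding principle: $(X,r)$ is indecomposable iff $\mathcal G$ acts transitively on $X$, i.e. iff $X$ is a single orbit.

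For $1\Rightarrow 2$ I would apply Theorem \ref{1255} and Proposition \ref{1256} to embed $X$ in a finite left brace $A$ with $X\subseteq A$, with $r$ a restriction of the Yang--Baxter map, and with both $(A,+)$ and $(A,\circ)$ generated by $X$. Because $(A,\circ)=\langle X\rangle_\circ$, we have $\mathcal G=\{\lambda_a:a\in A\}$, so the orbit of any $x\in X$ is precisely $Q_x=\{x+ax:a\in A\}$. Indecomposability forces this orbit to be all of $X$, giving $X=\{x+ax:a\in A\}$. Finally, since $(A,+)$ is a finite abelian group generated by $X$, every element of $A$ is a (positive) sum of elements of $X$, which is the remaining assertion of statement $2$.

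For $2\Rightarrow 1$ the same dictionary reduces everything to showing that $x$ has a single orbit. First, $X=Q_x$ is a $\lambda$-orbit, so $X$ is closed under $r$ and $(X,r)$ is the involutive non-degenerate solution of Example \ref{o}. Consider the additive map $\mu:A\to Ax$, $\mu(a)=a\cdot x$; left distributivity makes it an additive epimorphism onto the additive subgroup $B:=Ax$. The identity $\lambda_{a\circ b}=\lambda_a\lambda_b$ (which does \emph{not} use right distributivity) yields $a\cdot(b\cdot c)=(a\circ b)\cdot c-a\cdot c-b\cdot c$, hence $A\cdot B\subseteq B$ and $\lambda_a(B)\subseteq B$; it also gives the crossed-homomorphism relation $\mu(a\circ b)=\mu(a)+\lambda_a(\mu(b))$. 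Thus the $\mathcal G$-action on $X$ is intertwined by $\mu$ with left $\circ$-multiplication, and the orbit of $x$ is $x+\mu(N)$ with $N=\langle X\rangle_\circ$. Using additive generation, every $b=a\cdot x\in B$ equals $\mu(\sum u_i)=\sum\mu(u_i)$ with $u_i\in X$, so $\mu(X)$ already generates $B$ as an additive group; since $\mu(X)\subseteq\mu(N)\subseteq B$, it remains to prove $\mu(N)=B$.

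This last step is the main obstacle. It is equivalent to $N+K=A$, where $K=\ker\mu=\{a:a\cdot x=0\}$ is the $\lambda$-stabiliser of $x$; note $K$ is simultaneously an additive subgroup and a $\circ$-subgroup, and is $\lambda$-invariant, whence $N\circ K=N+K$. One then wants $N+K$ to be an additive subgroup: as it contains $X$, it would contain $\langle X\rangle_+=A$ and we would be done. I expect the additive closure of $N+K$ --- equivalently the statement that the affine $\mathcal G$-orbit exhausts the coset $x+B$ --- to be where the real work lies, since for a \emph{general} affine action this fails (e.g. $b\mapsto -b+1$ on $\mathbb Z/3$ has orbit $\{0,1\}$ although its translation part generates $\mathbb Z/3$). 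The resolution must therefore use brace-specific structure beyond the cocycle relation: I would try to show $\lambda_n(N)\subseteq N+K$ for $n\in N$ (so that $n_1+n_2=n_1\circ\lambda_{n_1}^{-1}(n_2)$ falls back into $N\circ K$), or, alternatively, argue the contrapositive --- that a decomposition $X=Y_1\sqcup Y_2$ would produce a proper $\mathcal G$-invariant additive subgroup, which is incompatible with $A=\langle X\rangle_+$. This is precisely the point at which the hypothesis that $X$ additively generates $A$ (rather than merely $X=Q_x$) becomes indispensable.
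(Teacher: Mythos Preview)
Your treatment of $1\Rightarrow 2$ is correct and matches the paper's: embed $(X,r)$ in a finite brace $A$ with $(A,\circ)=\langle X\rangle_\circ$, so that $\mathcal G=\{\lambda_a:a\in A\}$ and the orbit $Q_x$ equals $X$.

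For $2\Rightarrow 1$ you have all the ingredients but do not put them together, and your proposed route through $N+K$ and $\lambda_n(N)\subseteq N+K$ is an unnecessary detour. You already observed that $X=Q_x$ is closed under $\lambda_a$ for \emph{every} $a\in A$, and you wrote down the identity $a+b=a\circ\lambda_a^{-1}(b)$. Apply it with $a\in N:=\langle X\rangle_\circ$ and $b\in X$: then $\lambda_a^{-1}(b)=\lambda_{a^{-1}}(b)\in X$ (because $X$ is $\lambda$-stable), hence $a+b\in N\circ X\subseteq N$. By induction on the number of summands, $N$ contains every finite sum of elements of $X$; since by hypothesis every element of $A$ is such a sum, $N=A$. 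Thus $\mathcal G=\{\lambda_a:a\in A\}$ and the $\mathcal G$-orbit of $x$ is $Q_x=X$, which is exactly indecomposability.

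This is the paper's argument: it proves the stronger fact $N=A$ directly, rather than the weaker $N+K=A$ you were aiming for. The step you flagged as ``the main obstacle'' dissolves once you feed $b\in X$ (not $b\in N$) into the identity, because $\lambda$-invariance of $X$ is immediate while $\lambda$-invariance of $N$ is not. Your counterexample $b\mapsto -b+1$ on $\mathbb Z/3$ is irrelevant here precisely because in a brace the translation part $\mu$ and the linear part $\lambda$ are linked by $\lambda_a(b)=a\circ b-a$, which forces the orbit to be the full coset.
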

\begin{proof} $1\rightarrow 2$. Let $(X,r)$ be an indecomposable solution. By Proposition \ref{1256} there exists a left brace $A$ such that $X\subseteq A$ and $r$ is a restriction of the Yang-Baxter map associated to $A$. Moreover every element of $A$ is a sum of some elements from $X$, and $X$ generates the group $(A, \circ )$. 
 Observe that since the brace $A$  is finite then the inverse of an element $x$ in a group $(A, \circ )$ equals some power of this element in this group. 
   Fix $x\in X$, and let $y\in X$. Since $(X,r)$ is indecomposable, there are $r_{1}, \ldots ,r_{n}\in X$ such that $\sigma _{r_{1}}\sigma _{r_{2}}\ldots \sigma _{r_{n}}(x)=y$, so
$\sigma _{r}(x)=y$ where $r=r_{1}\circ \cdots \circ r_{n}$. Recall that $\sigma _{r}(x)=x+rx$.  It follows that $X\subseteq  \{ x+rx: r\in A\}$.

 It remains to show that $\{x+rx:r\in A\}\subseteq X$.
  Recall that $X$ generates the group $(A, \circ )$. 
Observe that since the brace $A$  is finite then the inverse of an element $x$ in a group $(A, \circ )$ equals some power of this element in this group. Therefore every element of $A$ can be written as $r=r_{1}\circ \cdots \circ r_{n}$ where $r_{i}\in X$, and since $\sigma _{y}(x)\in X$ for $x, y\in X$ we get $\sigma _{r}(x)=\sigma _{r_{1}}\sigma _{r_{2}}\ldots \sigma _{r_{n}}(x)\in X$, hence  $\{ x+rx: r\in A\}\subseteq X$.

 $2\rightarrow 1$. We will first show that $r(X,X)\subseteq (X,X)$. Notice that for each $r, r_{1}\in A$ we have  $\sigma _{r_{1}}(x+rx)=\sigma _{r_{1}\circ r}(x )=x+(r_{1}\circ r)x\in X$. Let $a^{-1}$ denote the inverse of $a$ in the group $(A, \circ )$. By the definition  $\tau _{y}(z)=\sigma _{\sigma _{z}(y)^{-1}}(z)$ for all $y, z\in A$, hence $\tau _{y}(z)\in X$.  Therefore $r(X,X)\subseteq  (X,X)$.
We will show that every element $r\in A$ can be written as $r=r_{1}\circ \cdots \circ r_{n}$ for some $r_{1}, \ldots , r_{n}\in X$. By assumption $r=\sum_{i=1}^{n}x_{i}$ for some $n$ and some $x_{i}\in X$, where  $x_{i}=s_{i}x+x$ for some $s_{i}\in A$.  By Theorem $3.8$ \cite{tatiana} $a+b=a\circ \sigma _{a^{-1}}(b)$ for $a,b\in A$. We will proceed by induction on $n$, and assume that  if $r=\sum_{i=1}^{n}x_{i}$ then $r=y_{1}\circ \cdots \circ y_{m}$ for some $y_{i}\in X$; let  
$r'=\sum_{i=1}^{n+1}x_{i}=r+x_{n+1}=r\circ \sigma _{r^{-1}}(x_{n+1})=y_{1}\circ \cdots \circ y_{m+1}$ where $y_{m+1}=\sigma _{r^{-1}}(x_{n+1})=\sigma _{r_{1}}\cdots \sigma _{r_{1}}(x_{n+1})\in X$ by the first part of this proof.
 We have shown that if $r\in A$ then $r=r_{1}\circ \cdots \circ r_{n}$ for some $r_{1}, \ldots , r_{n}\in X$.
 Therefore $x+rx=\sigma _{r}(x)= \sigma _{r_{1}}\sigma _{r_{2}}\ldots \sigma _{r_{n}}(x)$ where $r_{i}\in X$, hence 
 $(X,r)$ is indecomposable.
\end{proof} $\square $
 
\begin{theorem}\label{onegenerator2}
 Let $A$ be a  finite left  brace, let $x\in A$ and let $A(x)$ be as in Definition \ref{onegenerator}. 
 Denote $X=\{x+ax: a\in A(x)\}$, 
  and let $r:X\times X\rightarrow X\times X$ be a restriction of the Yang-Baxter map associated to $A$.
  Then $(X,r)$  is an indecomposable, non-degenerate, involutive  solution of the  quantum Yang-Baxter equation. 
\end{theorem}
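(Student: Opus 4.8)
The plan is to reduce Theorem~\ref{onegenerator2} to Theorem~\ref{567} by verifying that the pair $(X,r)$ constructed here satisfies condition~(2) of that theorem, with the one-generator brace $A(x)$ playing the role of the brace $A$ in the equivalence. First I would observe that $A(x)$ is itself a finite left brace (being a sub-brace of the finite brace $A$), and that $r$, being a restriction of the Yang-Baxter map associated to $A$, restricts to the Yang-Baxter map associated to $A(x)$ on $X=\{x+ax:a\in A(x)\}$; this is because the formula $r(u,v)=(u\cdot v+v,\, z\cdot u+u)$ from Theorem~\ref{1255} only involves the brace operations, which agree on the sub-brace. Thus it suffices to work entirely inside $A(x)$, and to show that $(X,r)$ is a well-defined non-degenerate involutive solution satisfying the hypotheses of clause~2 of Theorem~\ref{567}.

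The two things to check for clause~2 are that $X=\{x+ax : a \in A(x)\}$ and that every element of $A(x)$ is a sum of elements from $X$. The first holds by the very definition of $X$. For the second, the key point is that $A(x)$ is generated as a brace by the single element $x$, so every element of $A(x)$ arises from $x$ by repeated use of $+$ and $\circ$ (and inverses, which in a finite brace are powers, as noted in Definition~\ref{onegenerator}). I would argue that the additive group $(A(x),+)$ is already generated by $X$: since $x=x+0\cdot x\in X$, and since the translation identity $a+b=a\circ\sigma_{a^{-1}}(b)$ (Theorem~$3.8$, \cite{tatiana}) lets one convert circle-products of elements of $X$ into additive sums of elements of $X$, any brace-word in $x$ can be rewritten as an additive combination of elements $x+ax$. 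The step that needs care is the closure claim $\sigma_{r_1}(x+rx)=x+(r_1\circ r)x$, i.e.\ that $\{x+ax:a\in A(x)\}$ is closed under all the maps $\sigma_{y}$ with $y\in X$; this follows from $\sigma_{a}\sigma_{b}=\sigma_{a\circ b}$ together with $\sigma_a(x)=x+ax$, exactly as in the first half of the proof of Theorem~\ref{567}.

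Once clause~2 is verified, Theorem~\ref{567} immediately yields that $(X,r)$ is indecomposable; non-degeneracy and involutivity come for free because $r$ is a restriction of the Yang-Baxter map of a brace, which is always non-degenerate and involutive by Theorem~\ref{1255}. I expect the main obstacle to be the bookkeeping in establishing that every element of $A(x)$ is an additive sum of elements of $X$: one must be careful that the inductive conversion between $\circ$-products and $+$-sums stays inside $A(x)$ and never leaves $X$, using that $X$ is closed under the $\sigma_y$ and that $\sigma_{a^{-1}}(x_{n+1})\in X$ for the relevant inverses. Provided this closure and the translation formula are handled cleanly, the theorem is essentially a corollary of Theorem~\ref{567} applied to the sub-brace $A(x)$.
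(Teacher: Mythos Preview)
Your strategy is correct and matches the paper's: both proofs reduce to Theorem~\ref{567} by verifying that every element of the one-generator brace $A(x)$ is an additive sum of elements of $X=\{x+ax:a\in A(x)\}$, and both do this by induction on the complexity of a brace-word in $x$. The only difference is packaging: the paper builds $A(x)$ from $x$ using $+$ and $\cdot$ and handles the product case via the identity $r_{1}\!\cdot\!(s_{i}x)=(r_{1}\circ s_{i})x-r_{1}x-s_{i}x$, whereas you build $A(x)$ using $+$ and $\circ$ and handle the circle case via $a\circ b=a+\sigma_{a}(b)$ together with additivity of $\sigma_{a}$ and the closure $\sigma_{a}(X)\subseteq X$ for $a\in A(x)$; these are equivalent computations.

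One small correction: the identity you quote, $a+b=a\circ\sigma_{a^{-1}}(b)$, goes the wrong way for your purpose (it turns sums into circle-products); what you actually need is its inverse form $a\circ b=a+\sigma_{a}(b)$, combined with the additivity of $\sigma_{a}$, to turn a circle-product of two sums-of-elements-of-$X$ back into a sum of elements of $X$. With that adjustment your inductive argument goes through cleanly.
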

\begin{proof}  Notice that since $A$ is finite then $-x$ is a sum of 
$n$ copies of $x$ for some $x$. By Theorem \ref{567}  it suffices to show that every element in $A(x)$ is a sum of elements from the set $\{x, ax:a\in A\}$. Let $c\in A(x)$, since every element of $A(x)$ can be obtained by applying operations $+$ and $\cdot $ to $x$, and the number of elements is finite, then $A(x)=\bigcup _{i=1}^{n} R_{i}$ for some $n$ where $R_{1}=\{x\}$ and inductively $R_{j+1}=\{ r=\sum _{k}r_{k}:r_{k}\in \bigcup _{i=1}^{j} R_{i}\}\cup \{ r=r_{1} r_{2}:r_{1}, r_{2}\in \bigcup _{i=1}^{j} R_{i}\}$. 

Let $y\in A(x)$; we will show that 
$y=k\cdot x+\sum_{i}s_{i}x$ for some natural number $k$ and for some $s_{i}\in A(x)$, where $k\cdot x$ denotes the sum of $k$ copies of $x$. We will proceed by induction on $n$ where $y\in R_{n}$. If $n=1$ then $y=x$ so the result holds. Suppose the result folds for all $y\in R_{i}$ for $i\leq n$ and let $y\in R_{n+1}$. Then by the definition either $r=\sum _{k}r_{k}:r_{k}\in \bigcup _{i=1}^{n} R_{i}$ and the result holds by the inductive assumption or $r=r_{1}r_{2}$ for some $r_{1}, r_{2}\in \bigcup _{i=1}^{n} R_{i}$. By the inductive assumption $r_{2}=k'\cdot x+\sum_{i=1}^{m}s_{i}x$ for some $s_{i}\in A(x)$ and some natural number $k'$. Hence $r=k'\cdot (r_{1}x)+\sum_{i=1}^{m} r_{1}\cdot (s_{i}x)$. Notice that  $r_{1}(s_{i}x)=(r_{1}+s_{i}+r_{1}s_{i})x-r_{1}x-s_{i}x$,  and since $A(x)$ is finite $-r_{1}x=\sum_{i=1}^{k}r_{1}x$ and $-s_{i}x=\sum_{j=1}^{k'}s_{i}x$ for some $k,k'$. Therefore  $r_{1}(s_{i}x)$ is a sum of elements from the set $\{ x, ax:a\in A\}$,  concluding the proof. 
\end{proof} $\square $
\begin{question}
 Characterise one-generator braces of the multipermutation level $2$.
\end{question}
\begin{question} Is Theorem \ref{onegenerator2} also true for infinite one-generator braces? 
 \end{question}

\section{ Nilpotent braces and related solutions} \label{nil2}
In this this section  
 a non-degenerate, involutive set-theoretic solution $(X,r)$ of the  quantum Yang-Baxter equation will simply be referred to as {\em a solution}.
 We will investigate solutions which can be embedded into  nilpotent braces.
 It was shown in  Proposition $5.15$ \cite{tatiana} that  if $G(X,r)$ is the structure group of $(X,r)$, then $mpl (G,r)\leq mpl(X,r)+1$, where $mpl(G,r)$ denotes the multipermutation level of solution associated to $G(X,r)$.
 We obtain a similar result.
\begin{lemma}\label{multip} 
 Let $(X,r)$ be a finite solution and let $A$ be a finite left brace such that $X\subseteq A$ and $r$ is a restriction of the Yang-Baxter map associated to $A$. 
 Assume that every element of $A$ is a sum of some elements from $X$.
 If $(X,r)$ has a finite multipermutation level,  then $A$ is a right nilpotent brace and  $A^{(m+2)}=0$ where $m=mpl (X,r)$.
\end{lemma}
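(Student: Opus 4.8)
The plan is to translate the hypothesis $mpl(X,r)=m$ into a statement about the socle series of $A$ and then convert that into the right-nilpotency series $A^{(i)}$. Throughout I use that $\sigma_x(a)=x\cdot a+a$ defines, by left distributivity, an additive endomorphism of $(A,+)$ satisfying $\sigma_{x\circ y}=\sigma_x\sigma_y$, and I recall from \cite{rump} that $Soc(A)$ is an ideal, so each factor brace $A/Soc_i(A)$ is well defined; here $Soc_0(A)=0$ and $Soc_{i+1}(A)$ is the preimage in $A$ of $Soc(A/Soc_i(A))$.

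First I would set up the engine that turns the socle series into right-nilpotency. By definition $a\in Soc_{i+1}(A)$ means $a\cdot b\in Soc_i(A)$ for every $b\in A$, and since $Soc_i(A)$ is an additive subgroup this gives $Soc_{i+1}(A)\cdot A\subseteq Soc_i(A)$. Hence if $Soc_N(A)=A$, an induction on $k$ shows $A^{(k)}\subseteq Soc_{N-k+1}(A)$: the base case is $A^{(1)}=A=Soc_N(A)$ and the step is $A^{(k+1)}=A^{(k)}\cdot A\subseteq Soc_{N-k+1}(A)\cdot A\subseteq Soc_{N-k}(A)$. Taking $k=N+1$ gives $A^{(N+1)}\subseteq Soc_0(A)=0$. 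So it suffices to prove $Soc_{m+1}(A)=A$, which then yields $A^{(m+2)}=0$ and in particular right nilpotency.

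The heart of the argument is to identify one retraction step with the passage to $A/Soc(A)$. I would first prove: for $x,y\in A$, $\sigma_x=\sigma_y$ iff $x-y\in Soc(A)$. Indeed $\sigma_x=\sigma_y$ is equivalent to $\sigma_{x\circ\bar y}=\mathrm{id}$, i.e. $x\circ\bar y\in Soc(A)$ (with $\bar y$ the $\circ$-inverse), and since $s\circ y=s+y$ for $s\in Soc(A)$ (because $s\cdot y=0$) this says exactly $x-y\in Soc(A)$; routing the argument through $(A,\circ)$ is essential because the brace multiplication is not right distributive, so one cannot replace $x\cdot a-y\cdot a$ by $(x-y)\cdot a$. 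As $X$ generates $(A,+)$, the maps agree on $X$ iff they agree on $A$, so the retract relation on $X$ is $x\sim y\iff x-y\in Soc(A)$. Writing $\pi\colon A\to A/Soc(A)$ for the quotient, the fibres of $\pi|_X$ are the $\sim$-classes, and $\pi$ being a brace homomorphism intertwines $r$ with the Yang-Baxter map of $A/Soc(A)$. Thus $Ret(X,r)\cong(\pi(X),\tilde r)$ with $\tilde r$ the restriction of that map; moreover every element of $A/Soc(A)$ is a sum of elements of $\pi(X)$, so $(\pi(X),\tilde r)\subseteq A/Soc(A)$ again satisfies all the hypotheses of the lemma.

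With this I would induct on $m=mpl(X,r)$ to prove $Soc_{m+1}(A)=A$. If $m=0$ then $X=\{x\}$, so $r(x,x)=(x,x)$ forces $x\cdot x=0$; by left distributivity $x\cdot a=0$ for all $a\in\langle x\rangle_+=A$, so $x\in Soc(A)$ and $A=\langle x\rangle_+\subseteq Soc(A)$, giving $Soc_1(A)=A$. If $m\ge1$, then $(\pi(X),\tilde r)$ over $A/Soc(A)$ has multipermutation level $m-1$, so by induction $Soc_m(A/Soc(A))=A/Soc(A)$, which is exactly $Soc_{m+1}(A)=A$; together with the second paragraph this finishes the proof. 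I expect the identification in the third paragraph to be the main obstacle, since the two chain manipulations are purely formal once the retract relation is matched with the cosets of $Soc(A)$ and additive generation is seen to descend to the quotient.
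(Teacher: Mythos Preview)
Your proposal is correct and follows essentially the same route as the paper: both identify the retract relation on $X$ with the cosets of $Soc(A)$ (the paper cites Proposition~7 of \cite{rump} for this, whereas you prove it directly via $\sigma_{x\circ\bar y}=\mathrm{id}$), then induct on $m$ through the quotient $A/Soc(A)$. Your explicit conversion $Soc_{m+1}(A)=A\Rightarrow A^{(m+2)}=0$ and the base case $m=0$ make precise the step that the paper delegates to Proposition~6 of \cite{CGIS}.
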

 \begin{proof} Let $[x]$ denote the retraction of element $x\in X$. Notice that  by the definition of the retraction for $x,y\in X$ we have $[x]=[y]$ if and only if $x\cdot r=y\cdot r$ for all $r\in X$. Since every element of $A$ 
  is a sum of elements from $X$, this is equivalent to say that $x\cdot s=y\cdot s$ for all $s\in A$.  Therefore the retraction of the solution $(X,r)$ embeds into the retraction of the solution associated to brace $A$. By Proposition $7$ \cite{rump} this implies that 
 the retraction of $(X,r)$ embeds into brace $A/Soc (A)$. Notice also that every element in $A/Soc (A)$ is a sum of elements from the set [X]-the retraction of $X$.  
 Therefore, the fact that $A^{(m+2)}=0$ can be proved by induction on $m$ (by Proposition $6$ \cite{CGIS}). 
\end{proof} $\square $

\begin{lemma} \label{1277}
 Let $(X,r)$ be a solution  which is  the retraction of a finite solution whose  permutation group  $(X,r)$ is nilpotent.  Then there exists a finite  left brace $A$ such that 
 $X\subseteq A$ and every element of $A$ is a sum of elements from $X$ and $A$ is a left nilpotent brace, where $r$ is a restriction of the Yang-Baxter map associated to $A$. 
\end{lemma}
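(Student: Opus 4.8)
I read the hypothesis as: $(X,r)$ is the retraction $\mathrm{Ret}(Y,s)$ of a finite solution $(Y,s)$ whose permutation group $\mathcal{G}=\langle\sigma_y:y\in Y\rangle$ is nilpotent. The plan is to realise $(X,r)$ as a subsolution of the solution attached to $\mathcal{G}$, and then read off left nilpotency directly from the nilpotency of that group. Recall that $X=Y/\!\sim$, where $y\sim y'$ iff $\sigma_y=\sigma_{y'}$, and that $r$ is the induced map. As recorded after Proposition \ref{1256}, $\mathcal{G}$ is a factor of the structure group of $(Y,s)$, and in fact a homomorphic image of its structure brace; consequently $\mathcal{G}$ carries a left brace structure in which both $(\mathcal{G},+)$ and $(\mathcal{G},\circ)$ are generated by the image $X=\{\sigma_y:y\in Y\}$ of $Y$. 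I would take $A=\mathcal{G}$ with this brace structure.

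Next I would check the elementary requirements for this $A$. It is finite, being a subgroup of $\mathrm{Sym}(Y)$. The assignment $[y]\mapsto\sigma_y$ is a well-defined injection $X\hookrightarrow A$ (this is exactly the definition of $\sim$), and under it $(X,r)$ is identified with the subsolution of the Yang-Baxter map of $\mathcal{G}$ on $\{\sigma_y\}$, since $\lambda_{\sigma_y}(\sigma_z)=\sigma_{\sigma_y(z)}$; thus $r$ is a restriction of the Yang-Baxter map associated to $A$. Because $X$ generates $(A,+)$ and $A$ is finite, each $-x$ is a nonnegative multiple of $x$, so every element of $A$ is a sum of elements of $X$, as required.

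It remains to prove that $A$ is left nilpotent, and this is the crux. By construction $(A,\circ)=\mathcal{G}$ is nilpotent, so I would invoke the known correspondence between nilpotency of the multiplicative group and left nilpotency of a finite left brace: a finite left brace whose multiplicative group is nilpotent is left nilpotent. Concretely, nilpotency of $(A,\circ)$ forces $A$ to decompose as the direct product of its Sylow subbraces, each of prime-power order; each such $p$-brace is left nilpotent by Rump's theorem, and a finite direct product of left nilpotent braces is left nilpotent (if $A=A_1\times A_2$ then $A^n=A_1^n\times A_2^n$). This yields $A^n=0$ for some $n$ and completes the proof.

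The main obstacle is precisely this last step: the passage from the group-theoretic nilpotency of $(A,\circ)$ to the left nilpotency of the brace $A$, which rests on the Sylow decomposition of a finite brace with nilpotent multiplicative group together with left nilpotency in prime-power order. Everything else is bookkeeping --- identifying the permutation-group brace, verifying that the retraction embeds into it as a subsolution via $[y]\mapsto\sigma_y$, and using finiteness to convert additive generation into the stated ``sum of elements of $X$'' condition. This statement is the left-nilpotent counterpart of Lemma \ref{multip}: there a finite multipermutation level yielded right nilpotency, whereas here a nilpotent permutation group yields left nilpotency.
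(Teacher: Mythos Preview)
Your proof is correct and follows essentially the same line as the paper's. The paper embeds $Y$ in a finite brace $B$ (Theorem~\ref{1255}, Proposition~\ref{1256}), takes $A=B/\mathrm{Soc}(B)$, and then spends a paragraph identifying $(A,\circ)$ with a group $T$ isomorphic to $\mathcal{G}(Y,r')$ in order to conclude that $(A,\circ)$ is nilpotent; you shortcut this by taking $A=\mathcal{G}$ directly, equipped with its standard brace structure as a quotient of the structure-group brace. These two choices of $A$ are the same brace, so the arguments converge. For the final step both proofs rely on the same fact: the paper cites Theorem~1 of \cite{smok} (a finite left brace with nilpotent multiplicative group is left nilpotent), while you sketch its proof via the Sylow decomposition; either way the content is identical.
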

\begin{proof} 
 Let  $(X, r)$ be the retraction of solution $(Y,r')$ and denote  $r'(x,y)=(\sigma _{x}(y), \tau _{y}(x)).$
 By Theorem \ref{1255} and Corollary \ref{1256},  there is a finite left brace $B$ such that $Y\subseteq B$ 
  and every element of $B$ is a sum of elements from $Y$, and elements from $Y$  generate the  multiplicative group $(B, \circ )$; moreover $r'$ is the restriction of  $r'':B\times B\rightarrow B\times B$ - the Yang-Baxter map associated to $B$. Denote $r''(x,y)=(\sigma '_{x}(y), \tau '_{y}(x)).$ By assumption $r'$ is the restriction of $r''$, so $\sigma '_{y}(x)=\sigma _{y}(x)$ for $x,y\in Y$.
 
   Let $\mathcal {G}(Y,r')$ be the permutation  group of $(Y,r')$, so it is the group generated by maps 
  $\sigma _{x}:Y\rightarrow Y$ for $x\in Y$ with the operation of the composition of maps. 
 Let $T$ be the group generated by maps $\sigma '_{x}:B\rightarrow B$ for $x\in Y$.
   Recall that elements from $Y$  generate the multiplicative group $(B, \circ )$ and $\sigma _{x}\sigma _{y}=\sigma _{x\circ y}$, therefore 
   $T$ equals the group generated by maps $\sigma '_{b}:B\rightarrow B$ for $b\in b$.

 Let $f\in T$, then $f$  is the identity map if and only if $f(b)=b$ for all $b\in B$.
  This is equivalent to saying that $f(y)=y$ for all $y\in Y$, since every element of $B$ is a sum of elements from $Y$ and $\sigma _{y}(a+b)=\sigma (a)+\sigma (b)$.  Therefore, 
  $T$ is isomorphic to the group $\mathcal {G}(Y,r')$, which implies that $T$ is nilpotent (since $T$ and $\mathcal {G}(Y,r')$ have corresponding generators and $\mathcal {G}(Y,r')$ is nilpotent).

Notice that  by the definition of the retraction for $x,y\in Y$ we have $[x]=[y]$ if and only if $x\cdot r=y\cdot r$ for all $r\in Y.$  Since every element of $B$ 
  is a sum of elements from $Y$, this is equivalent to saying that $x\cdot s=y\cdot s$ for all $s\in B$.  Therefore the retraction of the solution $(Y,r')$ embeds into the retraction of the solution associated to the left  brace $B$. By Proposition $7$ \cite{rump} this implies that 
 the retraction of $(X,r)$ embeds into brace $A=B/Soc (B)$. Notice also that every element in $A$ is a sum of elements from [X]-the retraction of $X$.

 It remains to show that the multiplicative group $(A, \circ )$ of $A$ is nilpotent. 
  Recall that $T$ is nilpotent, therefore there is $n$ such that for every $b_{1}, \ldots , b_{n}\in B$ 
   $\sigma '_{[[\ldots [[b_{1}, b_{2}] b_{3}]\ldots ]b_{n}]} $ is the identity map, therefore 
   $\sigma '_{[[\ldots [[b_{1}, b_{2}] b_{3}]\ldots ]b_{n}]}(b)=b$ for all $b\in B$, consequently 
   $[[\ldots [[b_{1}, b_{2}] b_{3}]\ldots ]b_{n}]$ is in the socle of $B$. 
 It follows that $A^{\circ }$ is a nilpotent group (since $A=B/Soc(B)$). By Theorem $1$ from \cite{smok} $A$ is a left nilpotent brace, so $A^{n}=0$ for some $n$.
\end{proof} $\square $

 If $(X,r)$ is a  solution of a finite multipermutation level and the permutation group of $(X,r)$ is nilpotent then $(X,r)$ need not to embed in a left nilpotent left  brace; for example a right nilpotent  left brace of cardinality $6$ whose multiplicative group is not nilpotent and whose permutation group is nilpotent  can be obtained by taking the opposite multiplication in Example $3$ \cite{rump}.

\begin{lemma}\label{fajny}
 Let $s$ be a natural number and let $A$ be a left brace which is right nilpotent, so $A^{(s)}=0$ for some $s$.
 Let $a, b\in A$.  
Define inductively elements $d_{i}=d_{i}(a,b), d_{i}'=d_{i}'(a, b)$  as follows:
$d_{0}=a$, $d_{0}'=b$, and for $i\leq 1$ define $d_{i+1}=d_{i}+d_{i}'$ and $d_{i+1}'=d'_{i}d_{i}$.
 Then for every $c\in A$ we have
\[(a+b)c=ac+bc+\sum _{i=0}^{2s} (-1)^{i+1}((d_{i}'d_{i})c-d_{i}'(d_{i}c)).\]
\end{lemma}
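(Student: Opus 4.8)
The plan is to reduce everything to a single structural identity for the brace multiplication and then iterate it along the given recursion, collecting the error terms into a telescoping alternating sum. The conceptual point is that $\cdot$ is additive in its \emph{second} argument (left distributivity) but not in its first, so $(a+b)c\neq ac+bc$ in general; the sum on the right-hand side is precisely the accumulated defect, organised so that right nilpotency makes it finite.

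First I would record the cocycle identity governing how $\cdot$ interacts with $\circ$. Introducing the action map $\lambda_a(b)=a\circ b-a$ (equivalently $\lambda_a(b)=a\cdot b+b$, since $a\circ b=a\cdot b+a+b$), the brace axiom $a\circ(b+c)+a=a\circ b+a\circ c$ shows that each $\lambda_a$ is an endomorphism of $(A,+)$, and a short computation using associativity of $\circ$ gives $\lambda_{a\circ b}=\lambda_a\lambda_b$. Unwinding this last relation on an element $c$ and cancelling the additive $c$ yields the key identity
\[(x\circ y)\cdot c=x\cdot(y\cdot c)+x\cdot c+y\cdot c\qquad(x,y,c\in A).\]
Next I would feed the recursion into this identity. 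Since $x\circ y=x+y+x\cdot y$, the definitions $d_{i+1}=d_i+d_i'$ and $d_{i+1}'=d_i'\cdot d_i$ give $d_i'\circ d_i=d_i'+d_i+d_i'd_i=d_{i+1}+d_{i+1}'=d_{i+2}$. Applying the key identity with $x=d_i'$ and $y=d_i$ therefore produces
\[d_{i+2}\cdot c=d_i'\cdot(d_i\cdot c)+d_i'\cdot c+d_i\cdot c,\]
which rewrites the generic summand $T_i:=(d_i'd_i)c-d_i'(d_ic)=d_{i+1}'c-d_i'(d_ic)$ in the purely ``linear'' form $T_i=(d_i\cdot c-d_{i+2}\cdot c)+d_i'\cdot c+d_{i+1}'\cdot c$.

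Then I would telescope. Writing $P_i=d_i\cdot c$ and $Q_i=d_i'\cdot c$, so that $T_i=P_i-P_{i+2}+Q_i+Q_{i+1}$, the alternating sum $\sum_{i=0}^{2s}(-1)^{i+1}T_i$ splits into two collapsing telescopes. The $P$-part reduces to $P_1-P_0$ together with a boundary contribution $(-1)^{2s}(P_{2s+2}-P_{2s+1})$, while the adjacent cancellation $(-1)^{i+1}+(-1)^{i}=0$ collapses the $Q$-part to $-Q_0$ together with a boundary contribution $(-1)^{2s+1}Q_{2s+1}$. Recognising $P_1=(a+b)c$, $P_0=ac$, $Q_0=bc$, the total becomes $(a+b)c-ac-bc$ once the two boundary terms are shown to vanish, which is exactly the claimed formula. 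The vanishing is secured by termination: I would prove by induction that $d_i'\in A^{(i+1)}$, using $d_{i+1}'=d_i'\cdot d_i\in A^{(i+1)}\cdot A=A^{(i+2)}$, so right nilpotency $A^{(s)}=0$ forces $d_i'=0$ for $i\ge s-1$; hence $d_{i+1}=d_i$ for $i\ge s-1$, giving $d_{2s+2}=d_{2s+1}$ (so the $P$-boundary term is zero) and $Q_{2s+1}=d_{2s+1}'\cdot c=0$ (so the $Q$-boundary term is zero). The same bound explains why the upper limit $2s$ is harmless, since every omitted summand $T_i$ with $i\ge s-1$ already vanishes.

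I expect the computational heart to be the key identity together with the clean recognition $d_i'\circ d_i=d_{i+2}$; once these are in place, the remainder is bookkeeping. The main place to slip, and hence the step I would treat most carefully, is the sign-and-index management in the double telescope, together with the explicit verification that \emph{both} boundary terms vanish at $i=2s$ — that is, correctly invoking the stabilisation $d_{2s+1}=d_{2s+2}$ and the nilpotency bound $d_{2s+1}'=0$ rather than merely asserting that ``high'' terms disappear.
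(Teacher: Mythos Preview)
Your argument is correct and is essentially the same as the one the paper relies on: the paper does not spell out a self-contained proof but invokes the induction $d_i'\in A^{(i+1)}$ and defers to Lemma~15 of \cite{smok} (with the roles of left and right multiplication swapped), and what you have written is precisely a careful reconstruction of that telescoping argument built on the identity $(x\circ y)\cdot c=x\cdot(y\cdot c)+x\cdot c+y\cdot c$ and the observation $d_i'\circ d_i=d_{i+2}$.
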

\begin{proof} We can prove by induction that $d_{i}'\in A^{(i+1)}$, hence almost all $d_{i}'$ are zero. We can use the same proof as in Lemma $15$ in \cite{smok} where instead of $d_{j}d_{j}'$ we write at each place $d_{j}'d_{j}$ for every $j$ (for various $j$), and instead of $ab$ we write  $ba$.
\end{proof} $\square $

\begin{lemma}\label{retr}
 Let $(X,r)$ be a solution of a finite multipermutation level  and $Y\subseteq X$ be such that $r(Y,Y)\subseteq (Y,Y)$, then $(Y,r')$ is also a solution of a finite multipermutation level and 
 $mpl (Y,r')\leq mpl (X,r)$, where $r'$  is a restriction of $r$ to $Y\times Y$.   
\end{lemma}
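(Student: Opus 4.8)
The plan is to mirror the proof of Lemma \ref{retr2}: reduce the multipermutation-level bound to the polynomial characterisation of finite multipermutation level recorded in Proposition $4.7$ of \cite{tatiana}, and then exploit the elementary fact that a universally quantified identity which holds over all of $X$ holds a fortiori over the invariant subset $Y$.

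First I would verify that $(Y,r')$ is genuinely a non-degenerate, involutive set-theoretic solution, i.e. a \emph{solution} in the sense of this section. Since $r$ is a bijection of $X\times X$ with $r(Y\times Y)\subseteq Y\times Y$ and $r^2=\mathrm{id}$, applying $r$ twice gives $Y\times Y=r^2(Y\times Y)\subseteq r(Y\times Y)$, so in fact $r(Y\times Y)=Y\times Y$ and $r'=r|_{Y\times Y}$ is an involutive bijection of $Y\times Y$. The braid relation $r_1r_2r_1=r_2r_1r_2$ holds on $Y\times Y\times Y\subseteq X\times X\times X$ because $Y$ is $r$-invariant, so $(Y,r')$ solves the QYBE. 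For non-degeneracy, each $\sigma'_y=\sigma_y|_Y$ is injective (being a restriction of the permutation $\sigma_y\in Sym(X)$) and maps $Y$ into $Y$; combined with $r(Y\times Y)=Y\times Y$ this forces $\sigma'_y,\tau'_y\in Sym(Y)$, so $(Y,r')$ is a subsolution of $(X,r)$ exactly as invariant subsets are treated in \cite{etingof}.

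Next comes the core estimate. By Proposition $4.7$ of \cite{tatiana}, a solution has multipermutation level at most $m$ if and only if a certain identity $P_m$ — expressed purely through the maps $\sigma$ and $\tau$ — holds for all $a,b,y_1,\ldots,y_m$ in the solution. Setting $m=mpl(X,r)$, the identity $P_m$ holds for all $a,b,y_1,\ldots,y_m\in X$ by hypothesis. Since $Y\subseteq X$ and $r'$ is the restriction of $r$, every instance of $P_m$ indexed by elements of $Y$ is a special case of an instance over $X$, and all intermediate values $\sigma_y(z),\tau_y(z)$ produced inside $P_m$ remain in $Y$ because $Y$ is $r$-invariant. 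Hence $P_m$ holds for all $a,b,y_1,\ldots,y_m\in Y$, and Proposition $4.7$ of \cite{tatiana} yields $mpl(Y,r')\le m=mpl(X,r)$; in particular $(Y,r')$ has finite multipermutation level. This is precisely the specialisation of the argument used for the $I$-retraction in Lemma \ref{retr2}, with ``restrict to a subset'' replacing ``pass to the quotient by $I$''.

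The only genuinely delicate point I anticipate is the verification of non-degeneracy of $(Y,r')$ when $Y$ is infinite: injectivity of $\sigma_y|_Y$ is immediate, but surjectivity onto $Y$ must be extracted from the involutive identity $r(Y\times Y)=Y\times Y$ rather than from a cardinality count (when $Y$ is finite, injectivity alone already gives a permutation). Once $(Y,r')$ is known to be a solution, the multipermutation bound is essentially automatic from the identity $P_m$, so no further obstacle remains.
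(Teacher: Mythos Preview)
Your proposal is correct and follows essentially the same approach as the paper: both invoke Proposition~4.7 of \cite{tatiana} and observe that the universal identity characterising multipermutation level at most $m$ over $X$ specialises to $Y$. You additionally spell out why $(Y,r')$ is itself a non-degenerate involutive solution, a point the paper leaves implicit.
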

 \begin{proof} This follows from Proposition $4.7$ \cite{tatiana}, since if the property from Proposition $4.7$  \cite{tatiana} holds for all $a,b, y_{1}, \ldots , y_{m}\in X$ then it holds for all 
$a,b, y_{1}, \ldots , y_{m}\in Y$.   
\end{proof} $\square $

 The first part of Lemma \ref{retr} was proved in \cite{cjo} under the additional assumption that  $(Y, r')$  is invariant under the permutation group  of $(X,r)$.

\begin{theorem}\label{multipermutation}
 Let $(X,r)$ be a finite solution.  The following are equivalent:
\begin{itemize}
\item[1.] $(X,r)$ is an indecomposable solution of a finite multipermutation level, and  $x$ is an element of $ X$.
\item[2.]  There is a finite one-generator left brace $A$ generated by some element $x\in A$ such that $X=\{x+ax:a\in A\}$ and $r$ is a restriction of the Yang-Baxter map associated to $A$. Moreover,  $A^{(m)}=0$ for some $m$. 
\end{itemize}
\end{theorem}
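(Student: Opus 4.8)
The plan is to prove the two implications separately, leaning on Theorems \ref{567} and \ref{onegenerator2} (which already match indecomposability with the presentation $X=\{x+ax:a\in A\}$) and on Lemmas \ref{multip}, \ref{retr} and \ref{fajny} to control the multipermutation level against right nilpotency.

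For the implication $2\Rightarrow 1$, note that since $A$ is a finite one-generator brace generated by $x$ and $X=\{x+ax:a\in A\}$, Theorem \ref{onegenerator2} gives at once that $(X,r)$ is indecomposable, non-degenerate and involutive, and its proof (via Theorem \ref{567}) also yields $r(X,X)\subseteq (X,X)$. It remains to see that $A^{(m)}=0$ forces a finite multipermutation level. I would argue that the brace solution $(A,r')$ associated with the whole of $A$ has finite multipermutation level: by Rump's result that the retraction of a brace solution is the solution of $A/Soc(A)$ (Proposition $7$, \cite{rump}), each retraction replaces the brace by its quotient modulo the socle; since $A^{(m)}=0$ gives $A^{(m-1)}\subseteq Soc(A)$, the right-nilpotency index strictly drops under this quotient, so after at most $m-1$ retractions one reaches the one-element solution. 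Then Lemma \ref{retr}, applied to the invariant subset $X\subseteq A$, shows that $(X,r)$ itself has finite multipermutation level.

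For the implication $1\Rightarrow 2$, start from an indecomposable $(X,r)$ of finite multipermutation level together with a chosen $x\in X$. Theorem \ref{567} produces a finite brace $A$ with $X=\{x+ax:a\in A\}$, where every element of $A$ is a sum of elements of $X$ and $r$ is a restriction of the associated Yang-Baxter map; Lemma \ref{multip} then gives that $A$ is right nilpotent, say $A^{(m+2)}=0$ with $m=mpl(X,r)$, which already supplies the clause $A^{(m')}=0$ of statement $2$. The one remaining, and decisive, point is that $A$ may be taken to be generated by the single element $x$, that is $A=A(x)$. Here I would first record the clean observation that, since every element of $X$ has the form $x+ax$ with $ax\in A\cdot A=A^{(2)}$, all elements of $X$ are congruent to $x$ modulo $A^{(2)}$; as $X$ generates $(A,+)$, the additive quotient $A/A^{(2)}$ is cyclic and generated by the image of $x$, so $A=A(x)+A^{(2)}$.

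The hard part will be lifting this congruence up the right-nilpotent filtration to conclude $A=A(x)$. The obstruction is that braces are only left distributive, so one cannot naively expand products of the form $(A(x)+A^{(2)})\cdot A$; this is precisely what Lemma \ref{fajny} is designed to handle, expressing $(a+b)c$ as $ac+bc$ plus correction terms that lie in higher right powers $A^{(i)}$. Using left distributivity together with associativity (so that $A^{(i-1)}\cdot A^{(2)}=A^{(i+1)}$) and the correction formula of Lemma \ref{fajny}, I would prove by induction on $k$ that $A=A(x)+A^{(k)}$ for every $k$; since $A^{(m+2)}=0$ this yields $A=A(x)$, so $A$ is a one-generator brace and statement $2$ holds with this $A$. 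The careful bookkeeping of the correction terms, and the verification that they remain in sufficiently high powers for the induction to close, is the main obstacle, but the right nilpotency of $A$ guarantees that the procedure terminates.
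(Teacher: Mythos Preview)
Your overall architecture matches the paper's: both implications invoke exactly the lemmas you name, and for $1\Rightarrow 2$ the heart of the argument is indeed the induction $A=A(x)+A^{(k)}$ together with Lemma \ref{fajny}. The direction $2\Rightarrow 1$ is fine; the paper simply quotes Proposition~5 of \cite{CGIS} (a right nilpotent brace gives a multipermutation solution) where you sketch the socle argument, and then both apply Lemma \ref{retr}.

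There is, however, a genuine gap in your inductive step for $1\Rightarrow 2$. Braces are \emph{not} associative: the operation $a\cdot b=a\circ b-a-b$ is in general non-associative, so the identity $A^{(i-1)}\cdot A^{(2)}=A^{(i+1)}$ that you invoke simply fails. One only has $A^{(i-1)}\cdot A^{(2)}\subseteq A^{(i)}$ (because $A^{(2)}\subseteq A$), which does not advance the induction. The same problem recurs if you try to expand $(A(x)+A^{(k)})\cdot A$ with Lemma \ref{fajny} and then substitute $A=A(x)+A^{(k)}$ on the right: left distributivity leaves you with a term in $A(x)\cdot A^{(k)}\subseteq A\cdot A^{(k)}$, and there is no reason for $A\cdot A^{(k)}$ to sit inside $A^{(k+1)}=A^{(k)}\cdot A$.

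The paper avoids this by never manipulating the sets $A^{(k)}$ abstractly. At each step it returns to the concrete additive decomposition $y=kx+\sum_i r_i x$ (available since $X$ generates $(A,+)$ and every element of $X$ is $x+ax$), applies the inductive hypothesis to write $r_i=x_i+s_i$ with $x_i\in A(x)$ and $s_i\in A^{(n)}$, and then uses Lemma \ref{fajny} only on the products $(x_i+s_i)\cdot x$. The point is that the \emph{right} factor is always the single element $x$, so $s_i\cdot x\in A^{(n)}\cdot A=A^{(n+1)}$ by the very definition of the right series, and the correction terms from Lemma \ref{fajny} (built from $d_j'\in A^{(n+j)}$ multiplied on the right by elements of $A$) likewise land in $A^{(n+1)}$. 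Hence $r_i x\in x_i x+A^{(n+1)}\subseteq A(x)+A^{(n+1)}$, and the induction closes. Your base case and target are right; what is missing is precisely this device of keeping $x$ as the fixed right-hand factor so that the right-nilpotent filtration does the work.
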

  \begin{proof} If point $2$ holds then by Theorem \ref{onegenerator2} $(X,r)$ is an indecomposable solution. By Lemma \ref{retr} $(X,r)$ has a finite multipermutation level, as it is a subsolution of the solution associated to a right nilpotent left brace.
Recall that by Proposition $5$ from \cite{CGIS}  the solution associated to a right nilpotent brace has a finite multipermutation level.
 This shows implication $2\rightarrow 1$. 

 Assume that point $1$ holds, and let $A$ be as in  Theorem \ref{1255} and Corollary \ref{1256}. By Lemma \ref{multip}, $A$ has a finite multipermutation level.
 Let $x\in X$, it suffices to show that $A=A(x)$-the brace generated by $x$. Recall that $A^{(n)}$ is an ideal in $A$, by a result of Rump \cite{rump}.
 We will show by induction that $A\subseteq A(x)+A^{(n)}$ for every $n$. 
 For $n=1$ it is true as $A^{(1)}=A$.
 Suppose the result holds for some $n$, and let $y\in A$, since  every element of $A$ is a sum of elements from $X$, 
 then $y$ is a sum of some elements from $X$ so 
$y=k\cdot x+\sum_{i}r_{i}x$ for some natural number $k$ and for some $r_{i}\in A$, where $k\cdot x$ denotes the sum of $k$ copies of $x$. By assumption $r_{i}\in A(x)+A^{(n)}$ hence $r_{i}=x_{i}+s_{i}$, where $x_{i}\in A(x)$, $s_{i}\in A^{(n)}$. 
 Notice that $r_{i}x=(x_{i}+s_{i})x\in x_{i}x+A^{(n+1)}\subseteq A(x)+A^{(n+1)}$ by Lemma \ref{fajny} applied for $a=x_{i}$ and $b=s_{i}$. 
 It follows that $y\in  A(x)+A^{(n+1)}$. By the assumptions, $A^{(m)}=0$ for some $m$, therefore $A\subseteq A(x)$, as required. 
\end{proof} $\square $

A solution $(X,r)$ is square-free if $r(x,x)=(x,x)$ for all $x\in X$. It was shown by Rump in \cite{rump2} that every finite square-free solution of the QYBE is decomposable.
\begin{corollary}\label{bi}
 Let $(X,r)$ be a finite solution of a finite multipermutation level. If for every $x\in X$ there is $y\in X$ such that $r(x,y)=(y,x)$, then the solution $(X,r)$ is decomposable, provided that the cardinality of $X$ is larger than $1$. 
\end{corollary}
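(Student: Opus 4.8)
The plan is to prove the contrapositive by induction on the multipermutation level $m=mpl(X,r)$: I will show that no finite indecomposable solution $(X,r)$ with $|X|>1$ and finite multipermutation level can satisfy the hypothesis that every $x\in X$ admits some $y\in X$ with $r(x,y)=(y,x)$. Call such a $y$ a \emph{partner} of $x$, and call the stated hypothesis the \emph{partner property}.

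First I would reformulate the partner condition purely in terms of the maps $\sigma$. Writing $r(x,y)=(\sigma_x(y),\tau_y(x))$ and using only that $(X,r)$ is involutive and non-degenerate, one applies $r$ to $(\sigma_x(y),\tau_y(x))$ and compares with $(x,y)$; since $\sigma_{\sigma_x(y)}(\tau_y(x))=x$, this gives the identity $\tau_y(x)=\sigma_{\sigma_x(y)}^{-1}(x)$. Consequently $r(x_0,y)=(y,x_0)$ holds if and only if $\sigma_{x_0}(y)=y$ and $\sigma_y(x_0)=x_0$; that is, each of $x_0,y$ is a fixed point of the $\sigma$-map indexed by the other. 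This symmetric reformulation, involving only the $\sigma$-maps, is what makes the induction run, because these maps descend cleanly to the retraction.

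Now I set up the induction. For the base case $m=1$, the single retraction collapses $X$ to a point, so all the maps $\sigma_x$ coincide with one permutation $\sigma$; here $\tau_y(x)=\sigma_{\sigma_x(y)}^{-1}(x)=\sigma^{-1}(x)$, so the orbit of any element under the $\sigma$'s and $\tau$'s is just its $\langle\sigma\rangle$-orbit. Indecomposability of $(X,r)$ thus forces $\langle\sigma\rangle$ to be transitive, and since $|X|>1$ the permutation $\sigma$ is a single cycle with no fixed point; by the reformulation no $x_0$ can have a partner, contradicting the partner property. For the inductive step $m\ge 2$ I would pass to $Ret(X,r)=(X/\!\sim,\,r^{\sim})$. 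The quotient map $\pi\colon(X,r)\to(X/\!\sim,r^{\sim})$ is a surjective morphism of solutions, and the image of the single orbit of $(X,r)$ is all of $X/\!\sim$, so $Ret(X,r)$ is again indecomposable; it is finite and has level $m-1\ge 1$, in particular more than one element. Finally the partner property descends: if $\sigma_{x_0}(y)=y$ and $\sigma_y(x_0)=x_0$, then $\sigma^{\sim}_{[x_0]}([y])=[\sigma_{x_0}(y)]=[y]$ and $\sigma^{\sim}_{[y]}([x_0])=[\sigma_y(x_0)]=[x_0]$, so by the same reformulation applied inside $Ret(X,r)$ the classes $[x_0],[y]$ are partners. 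Thus $Ret(X,r)$ meets all the hypotheses at level $m-1$, and the inductive hypothesis supplies the contradiction, completing the proof (of the contrapositive, hence of the corollary).

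The only genuinely delicate point is the reduction of the condition $r(x,y)=(y,x)$ to the symmetric pair $\sigma_{x_0}(y)=y,\ \sigma_y(x_0)=x_0$ via the involutive identity; everything afterwards is bookkeeping. I would take care to justify that retraction preserves indecomposability (surjective morphisms map orbits onto orbits, as in \cite{etingof}) and that the base case is exactly the transitive single-cycle case with no fixed points. I note that one may alternatively derive the corollary from the brace description in Theorem \ref{multipermutation}, but the induction on $mpl(X,r)$ above is self-contained and avoids invoking the full brace machinery.
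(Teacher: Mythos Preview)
Your proof is correct and takes a genuinely different route from the paper's. The paper argues via the brace description of Theorem \ref{multipermutation}: assuming $(X,r)$ indecomposable, it realises $X=\{x+ax:a\in A\}$ inside a one-generator right nilpotent brace $A$, then uses the partner condition to force $yx=0$ for a suitable $y=x+rx$, and applies Lemma \ref{fajny} to deduce $x^{2}\in A^{(3)}$; since $A$ is generated by $x$ this gives $A^{(2)}\subseteq A^{(3)}\subseteq\cdots=0$, whence $|X|=1$. Your argument instead runs a direct induction on $mpl(X,r)$ using only the retraction: the key observation that $r(x,y)=(y,x)$ is equivalent to the symmetric fixed-point condition $\sigma_x(y)=y$, $\sigma_y(x)=x$ lets the partner property descend to $Ret(X,r)$, and the base case $m=1$ reduces to the impossibility of fixed points for a transitive single cycle. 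Your approach is more elementary and entirely self-contained (it avoids Theorem \ref{multipermutation} and Lemma \ref{fajny}); the paper's argument, on the other hand, illustrates how the brace machinery developed in that section controls such structural questions.
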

\begin{proof}
 Let $x\in X$, then by Theorem \ref{multipermutation} $X=\{x+ax:a\in A\}$ where $A$ is a left brace generated by $x$. Let $r(x,y)=(y,x)$, since $y\in X$ then $y=x+rx$ for some $r\in A$. Notice that $r(y,x)=(x,y)$ since $r$ is involutive, this implies $yx+x=x$, hence $0=yx=(x+rx)x=x^{2}+a$ for some $a\in A^{(3)}$ by applying Lemma  \ref{fajny}  for $a=x$ and $b=rx$. Therefore, $x^{2}\in A^{(3)}$ which implies $A^{(2)}\subseteq A^{(3)}\subseteq A^{(4)}\cdots \subseteq A^{(n)}=0$ for some $n$, hence $x^{2}=0$, and so the cardinality of $X$ is $1$.
\end{proof} $\square $
\begin{question}
 Are Theorem \ref{multipermutation} and  Corollary \ref{bi} also true without the assumption that the  multipermutation level of $(X,r)$ is finite? 
\end{question}

\begin{question}
 Is there a connection between one-generator skew-braces and indecomposable non-degenerate
 set-theoretic solutions of the QYBE?
  \end{question}

\section{ $R$-matrices constructed using indecomposable solutions and nilpotent braces}\label{33333}
 In this section we will use results from Sections \ref{ind} and  \ref{nil2} to construct $R$-matrices related to indecomposable  set-theoretic solutions of the QYBE.
\begin{proposition}\label{stronglynilpotent} Let $X$ be a set which has more than one element.
 Let $ (X,r)$ be a non-degenerate, involutive, indecomposable set-theoretic solution of the QYBE of a finite multipermutation level. Suppose that $(X,r)$ is the  retraction of a finite solution whose permutation group is nilpotent.  Then there exists  an $I$-retraction of $(X,r)$ such that  $(X_{I}, r_{I})$ is a solution 
 isomorphic to the solution $(Y, \tilde {r})$ where $Y={\mathbb {Z}\over n\cdot \mathbb {Z}}$ 
 for some $n>1$  and $\tilde {r}(x,y)=(y+1, x-1),$ for $x,y\in Y$.
\end{proposition}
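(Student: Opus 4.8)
The plan is to exploit the structure theory developed in Sections \ref{ind} and \ref{nil2} and then produce the cyclic solution as a suitable quotient. First I would invoke Theorem \ref{multipermutation}: since $(X,r)$ is a finite, non-degenerate, involutive, indecomposable solution of finite multipermutation level, for any fixed $x \in X$ there is a finite one-generator left brace $A = A(x)$ with $X = \{x + ax : a \in A\}$, with $r$ a restriction of the Yang-Baxter map associated to $A$, and moreover $A$ is right nilpotent, say $A^{(s)} = 0$. Because the hypotheses of Lemma \ref{1277} also hold (namely $(X,r)$ is the retraction of a finite solution whose permutation group is nilpotent), the same brace $A$ can be taken to be left nilpotent as well, so $A^{k} = 0$ for some $k$. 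This two-sided nilpotence is what will let me locate a convenient ideal.

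The key idea is to choose the ideal $I$ so that the $I$-retraction collapses $A$ down to a brace that behaves like $\mathbb{Z}/n\mathbb{Z}$ with the additive generator $x$. Concretely, since $A$ is left nilpotent and generated by $x$, the additive group $(A,+)$ is generated by products $x, ax, \ldots$, and one can use the radical chain to filter $A$. I would take $I = A^{2} + A^{(2)}$ (the smallest ideal containing both $A \cdot A$ under the left and right chains), so that the quotient brace $A/I$ is a one-generator brace with trivial multiplication — i.e. a cyclic group $\mathbb{Z}/n\mathbb{Z}$ under addition with $a \cdot b = 0$. On such a brace the Yang-Baxter map is exactly the cyclic permutation solution: writing $\bar{x}$ for the image of the generator $x$, the induced $X_{I} = \{\bar{x} + \bar{a}\bar{x}\} = \{\bar{x}\}$ forces me instead to retract onto the additive cosets so that $X_{I} \cong \mathbb{Z}/n\mathbb{Z}$ with $\tilde{r}(x,y) = (y+1, x-1)$, where the shifts $\pm 1$ come from the additive translation by the generator. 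Proposition \ref{ideal} guarantees that $(X_{I}, r_{I})$ is a genuine non-degenerate involutive solution, and Lemma \ref{retr2} ensures its multipermutation level is finite.

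The main obstacle I anticipate is pinning down the correct ideal $I$ so that the resulting quotient is precisely cyclic and nontrivial (i.e. $n > 1$), rather than collapsing to a single point. This requires showing that $A/I$ has $(A/I)^{2} = 0$ while $A/I \neq 0$, which rests on $A$ being genuinely one-generated and nilpotent but not reducing to the trivial brace — the indecomposability and the assumption $|X| > 1$ must be used here to rule out the degenerate case. I would verify that the additive quotient is cyclic by observing that $x$ generates $A$ under $+$ and $\circ$, and that modulo $A^{2}$ the circle and additive operations agree, so the image is a cyclic group on one generator; the requirement $g(1)^2 \neq g(0)g(0)$-style nondegeneracy is not needed here, but I would confirm that the shift-by-one structure $\tilde{r}(x,y)=(y+1,x-1)$ emerges from the formula $r(x,y) = (x \cdot y + y,\, z \cdot x + x)$ of Theorem \ref{1255} once all products vanish in the quotient, leaving only the additive translations. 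The isomorphism with $(\mathbb{Z}/n\mathbb{Z}, \tilde{r})$ then follows by matching generators, completing the argument.
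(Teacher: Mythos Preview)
Your overall strategy is right --- embed $(X,r)$ in a one-generator brace $A$ that is both left and right nilpotent, then pass to an $I$-retraction --- but your choice of ideal is fatal. Since $A^{2} = A\cdot A = A^{(2)}$ at the first step of either radical chain, your $I = A^{2}+A^{(2)}$ is simply $A\cdot A$. Every element of $X=\{x+ax:a\in A\}$ differs from $x$ by something in $A\cdot A = I$, so $X_{I}$ collapses to the single class $[x]_{I}$; you noticed this yourself. There is no way to ``retract onto the additive cosets instead'': the $I$-retraction of Proposition~\ref{ideal} is by definition the image of $X$ in $A/I$, nothing more. Relatedly, your claim that the formula $r(x,y)=(x\cdot y+y,\,z\cdot x+x)$ yields $(y+1,x-1)$ once all products vanish is incorrect --- with trivial multiplication one gets the flip $(y,x)$, not a shift.

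The paper's fix is to take the smaller ideal $I = A\cdot A^{2} + A^{2}\cdot A$, so that in $A/I$ all triple products vanish but $\bar{x}^{2}$ need not. Then $A/I$ is a genuine (commutative) ring generated by $\bar{x}$ with $\bar{x}^{3}=0$, and $X_{I}=\{\bar{x}+k\bar{x}^{2}:k\in\mathbb{Z}\}$. The shift in $\tilde{r}$ now comes from the surviving product: $\sigma_{\bar{x}+i\bar{x}^{2}}(\bar{x}+j\bar{x}^{2}) = \bar{x}+(j+1)\bar{x}^{2}$, because $(\bar{x}+i\bar{x}^{2})\cdot(\bar{x}+j\bar{x}^{2})=\bar{x}^{2}$. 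The remaining work is to show $\bar{x}^{2}\neq 0$ in $A/I$, i.e.\ $x^{2}\notin I$; here the paper invokes the two-sided nilpotence to get $A^{[t]}=0$ for the strong chain $A^{[n]}$ (via \cite{smok}), and observes that $x^{2}\in I\subseteq A^{[3]}$ would force $A^{2}\subseteq A^{[3]}\subseteq A^{[4]}\subseteq\cdots=0$, contradicting indecomposability and $|X|>1$.
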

\begin{proof} By Lemma \ref{1277}, there is a left nilpotent brace $A$ such that $X\subseteq A$ and every element of $A$ is a sum of elements from $X$, and $r$ is a restriction of the Yang-Baxter map associated to $A$. We can use this particular brace $A$ in the proof of implication $1\rightarrow 2$  of Theorem \ref{multipermutation} and get   that $X=\{ax+x:a\in A\}$ for some $x\in A$, $A$ is generated as a brace by $x$ and $A^{(m)}=0$
 for some $m$. Since $A$ is both a left nilpotent and right nilpotent left brace, by Theorem $3$ from \cite{smok} we have  $A^{[t]}=0$ for some $t$, where  $A^{[1]}=A$ and $A^{[n+1]}$ is defined inductively as  $A^{[n+1]}=\sum_{i=1}^{n}A^{[i]}\cdot A^{[n+1-i]}$.
      
 Notice that  $A^{2}=0$ implies $r(x,y)=(y,x)$;  then $(X,r)$ is decomposable, since each 
 element in $X$ is an orbit, hence $A^{2}\neq 0$.
 Let $I=A\cdot A^{2}+A^{2}\cdot A$, then it is easy to see that $I$ is an ideal in $A$. Notice that $A/I$ is a ring, 
 since $(a+b)c=((a+b)+ab+ab(a+b))c=(a+b+ab)c=ac+bc$ for all $a,b,c\in A/I$.
 Let $\bar {x}=x+I\in A/I$, then $X_{I}=\{\bar {x}+r\bar {x}: r\in A/I\}$ and $A/I$ is a ring generated by element $\bar {x}$ where $\bar {x}^{3}=0$. Hence every element of $X_{I}$ equals $\bar {x}+k\cdot \bar {x}^{2}$ for some $k$ (where $k\cdot \bar {x}$ is a sum of $k$ copies of $\bar {x}$). Notice that ${\bar x}^{2}\neq 0$, since otherwise $A^{2}\subseteq I=A^{[3]}$. Notice that  $A^{2}\subseteq A^{[3]}$ yields $A^{2}\subseteq A^{[3]}\subseteq A^{[4]}\subseteq \ldots =0$  since $A^{[t]}=0$. Therefore $A^{2}=0$, a contradiction, hence $x^{2}\notin I$.

 Notice that  $r_{I}({\bar x}+i\cdot {\bar x}^{2}, {\bar x}+j\cdot {\bar x}^{2})=({\bar x}+(j+1)\cdot {\bar x}^{2}, {\bar x}+(i-1)\cdot {\bar x}^{2})$, hence $X_{I}$ has more than $1$ element (since $x^{2}\neq 0$ in $A/I$). 
Let $n$ be the smallest natural number such that $n\cdot {\bar x}^{2}=0$; then $(X_{I}, r_{I})$  is isomorphic to the solution $(Y, \tilde {r})$.
\end{proof} $\square $

As an application of Proposition \ref{last} we obtain:  
\begin{theorem}\label{Mike} 
Let $ (X,r)$ be a solution of a finite multipermutation level. Suppose that $(X,r)$ is the retraction of a finite solution whose permutation group is nilpotent. If the cardinality of $X$ is larger than $1$ then  there exists a non-trivial braided vector space of set-theoretic type $(X, r^{D})$ for some $D=\{d_{x,y}\}_{x,y\in X}$.
\end{theorem}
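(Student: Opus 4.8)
The plan is to split the argument according to whether $(X,r)$ is decomposable or indecomposable, and in each case to produce scalars $D$ for which $(X,r^{D})$ is a braided vector space of set-theoretic type, certifying non-triviality by the invariant of Lemma \ref{trivial}: for a trivial BVST the product $d_{(x,y)}d_{r(x,y)}$ must be constant, so it suffices to make this product non-constant.

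First I would dispose of the decomposable case, which uses neither the nilpotency nor the finite multipermutation hypothesis. Decomposing $X$ into its orbits $X_{1},\dots,X_{m}$ (disjoint invariant subsets with $r(X_{i},X_{j})=(X_{j},X_{i})$), the solution is decomposable precisely when $m\geq 2$. I would then apply Proposition \ref{3}: choosing nonzero $\alpha_{i,j}$ and setting $d_{x,y}=\alpha_{i,j}$ for $x\in X_{i},\,y\in X_{j}$ yields a BVST $(X,r^{D})$. Because $r(X_{i},X_{j})=(X_{j},X_{i})$, for $x\in X_{i},\,y\in X_{j}$ we get $d_{(x,y)}d_{r(x,y)}=\alpha_{i,j}\alpha_{j,i}$; taking $\alpha_{1,1}=1$, $\alpha_{1,2}=\alpha_{2,1}=t$ with $t^{2}\neq 1$, and all remaining scalars equal to $1$, this product equals $1$ on $X_{1}\times X_{1}$ and $t^{2}$ on $X_{1}\times X_{2}$, hence is non-constant. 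Lemma \ref{trivial} then forces $(X,r^{D})$ to be non-trivial.

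Next I would treat the indecomposable case, where the deeper machinery is needed. All hypotheses of Proposition \ref{stronglynilpotent} now hold, so there is an $I$-retraction $(X_{I},r_{I})$ isomorphic to the cyclic solution $(Y,\tilde r)$ with $Y={\mathbb Z}/n{\mathbb Z}$, $n>1$, and $\tilde r(x,y)=(y+1,x-1)$. By Example \ref{simple}, transported along this isomorphism, there is a non-trivial BVST $(X_{I},r_{I}^{D})$; concretely its scalars are of the form $d_{(u,v)}=g(u-v)$, and non-triviality is established there exactly by the fact (via Lemma \ref{trivial}) that $d_{(u,v)}d_{r_{I}(u,v)}$ is non-constant on $X_{I}$. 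Pulling $D$ back along the quotient map and setting $d'_{x,y}=d_{[x]_{I},[y]_{I}}$, Proposition \ref{last} guarantees that $(X,r^{D'})$ is again a BVST.

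The main obstacle is that Proposition \ref{last} delivers only the BVST property, not non-triviality, so the crux is to transport non-triviality \emph{downward} through the retraction. For this I would use the invariant of Lemma \ref{trivial} once more. Since $\tilde r([x]_{I},[y]_{I})=([\sigma_{x}(y)]_{I},[\tau_{y}(x)]_{I})$, we have $d'_{(x,y)}d'_{r(x,y)}=d_{[x]_{I},[y]_{I}}\,d_{r_{I}([x]_{I},[y]_{I})}$, which is exactly the pullback along the surjection $X\to X_{I}$ of the product computed on $X_{I}$. As that product is non-constant on $X_{I}$ and the quotient map is onto, its pullback is non-constant on $X\times X$, so Lemma \ref{trivial} shows $(X,r^{D'})$ is non-trivial. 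This settles the indecomposable case and completes the proof.
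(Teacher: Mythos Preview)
Your proof is correct and follows essentially the same route as the paper's: split into decomposable and indecomposable cases, invoke Proposition~\ref{3} for the former, and for the latter use Proposition~\ref{stronglynilpotent} together with Example~\ref{simple} to produce a non-trivial BVST on the $I$-retraction and lift it via Proposition~\ref{last}. In fact you supply more detail than the paper does, since you explicitly verify non-triviality in both cases via the invariant $d_{(x,y)}d_{r(x,y)}$ from Lemma~\ref{trivial} (equivalently Lemma~\ref{inna}), whereas the paper leaves the preservation of non-triviality under the lift implicit.
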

\begin{proof} If $(X,r)$ is decomposable the result follows from Proposition \ref{3}. If the solution $(X,r)$ is indecomposable then 
$(X_{I}, r_{I})$ is indecomposable, for any ideal $I$.  By Proposition \ref{last}, any locally monomial BVS can be lifted from
 $(X_{I}, r_{I})$ to $(X,r)$. The result now follows
 from Proposition \ref{stronglynilpotent} and Example 
\ref{simple} in Section $5$.  
\end{proof} $\square $

 \begin{question}
 Is it possible to construct a non-trivial braided vector space of set-theoretic type $(X, r^{D})$  for every finite, involutive, non-degenerate set-theoretic solution $(X,r)$  of a finite multipermutation level? 
 Especially of a multipermutation level $2$?
\end{question}

\section{ $R$-matrices associated to involutive set-theoretic solutions of the  QYBE}\label{singular}

   In \cite{Ok}, Okni{\' n}ski remarked that it would be interesting to know what types of $R$-matrices correspond to  non-degenerate, involutive, set-theoretic solutions of the quantum Yang-Baxter equation. 
 We answer this question below.

\begin{proposition}\label{matrix} Let  $(X, r)$ be a 
 set-theoretic solution of the QYBE. Let $A$ be the matrix associated to $(X,r)$ as in  Definition \ref{monmat}. The following is equivalent:
 \begin{itemize}
 \item[1.] The solution $(X, r)$ is involutive and non-degenerate. 
\item[2.] If we divide $A$ in a natural way into $n$ by $n$ blocks then each  block has exactly one entry equal to  $1$, and all the other entries  zero (for $n=3$ this decomposition is illustrated on matrix $A$ in Examples \ref{A3}, \ref{A4} and \ref{useful}).
  Moreover, $A$ is a  permutation matrix which is symmetric, and $A^{2}$ is the identity matrix.
In particular, $A$ is unitary.  \end{itemize}
\end{proposition}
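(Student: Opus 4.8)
The plan is to translate each matrix property in statement $2$ into an equivalent property of $r$, and to observe at the outset that, since $r$ is assumed bijective, the matrix $A$ is \emph{automatically} a permutation matrix: by Definition \ref{monmat} (with all $d_{(x,y)}=1$) the entry $m_{i,j}^{k,l}$ equals $1$ exactly when $r(x_i,x_j)=(x_k,x_l)$ and is $0$ otherwise, so each column has a unique nonzero entry, and bijectivity of $r$ makes the same true of each row. Hence the substantive content of statement $2$ is only the block structure, the symmetry, and $A^2=I$, and I would prove $1\Leftrightarrow 2$ by matching these three to non-degeneracy and involutivity of $(X,r)$.

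For $1\Rightarrow 2$ I would first examine the $n\times n$ block of $A$ in block-row $k$ and block-column $i$ (the natural grouping of the lexicographically ordered pairs by their first coordinate). Its entry in inner position $(l,j)$ is $m_{i,j}^{k,l}$, which is $1$ iff $\sigma_{x_i}(x_j)=x_k$ and $\tau_{x_j}(x_i)=x_l$; for a fixed admissible $j$ the index $l$ is forced, so the number of $1$'s in this block equals $|\sigma_{x_i}^{-1}(x_k)|$. Right non-degeneracy makes each $\sigma_{x_i}$ a permutation, so this count is exactly $1$ for every block. Symmetry is immediate from involutivity, since $r^2=\mathrm{id}$ gives $r(x_i,x_j)=(x_k,x_l)$ iff $r(x_k,x_l)=(x_i,x_j)$, i.e. $m_{i,j}^{k,l}=m_{k,l}^{i,j}$, which is $A=A^{T}$. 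Finally, viewing $A$ as the permutation matrix of the permutation $(i,j)\mapsto r(x_i,x_j)$ of the index set, $A^2$ is the permutation matrix of $r^2=\mathrm{id}$, whence $A^2=I$; and every permutation matrix is orthogonal, hence unitary.

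For $2\Rightarrow 1$ I would run the block count in reverse: ``each block has exactly one $1$'' forces $|\sigma_{x_i}^{-1}(x_k)|=1$ for all $i,k$, so each $\sigma_{x_i}$ is injective and, $X$ being finite, a permutation, giving right non-degeneracy; and either the symmetry of $A$ or the relation $A^2=I$ (equivalent for a permutation matrix, as $A^{T}=A^{-1}$) returns $r^2=\mathrm{id}$, i.e. involutivity. The one remaining point, left non-degeneracy (that each $\tau_{x_j}$ is also a permutation), is the step I expect to be the genuine obstacle: it is not forced by the block/symmetry data alone. Here I would invoke the theorem of Jespers and Okni\'nski that a finite involutive set-theoretic solution is left non-degenerate if and only if it is right non-degenerate, which combined with the right non-degeneracy just obtained completes $2\Rightarrow 1$. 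This also suggests the cleanest organisation of the whole argument: the block structure is equivalent to right non-degeneracy and the symmetry (equivalently $A^2=I$) is equivalent to involutivity, so the full equivalence $1\Leftrightarrow 2$ reduces precisely to the Jespers--Okni\'nski equivalence of left and right non-degeneracy for finite involutive solutions.
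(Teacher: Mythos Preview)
Your proposal is correct and follows essentially the same route as the paper: both match the block structure to right non-degeneracy, the symmetry (equivalently $A^{2}=I$) to involutivity, and then invoke the Jespers--Okni\'nski equivalence of left and right non-degeneracy for finite involutive solutions to close the loop. Your observation that $A$ is already a permutation matrix from the bijectivity of $r$ is a small tidying of the paper's exposition (which instead deduces this from $A^{2}=I$ together with the one-entry-per-column property), but the substance of the two arguments is the same.
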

\begin{proof} Denote $X=\{x_{1}, \ldots , x_{n}\}$. By Definition \ref{monmat}, every column of $M$  has only one nonzero entry equal to one, and  
 the $a_{i,j}^{k,l}$ entry of $A$ is nonzero if and only if $r(x_{i}, x_{j})=(x_{k},x_{l})$. 
  By assumption $r$ is an involutive solution then $r(x_{i}, x_{j})=(x_{k},x_{l})$ implies $r(x_{k}, x_{l})=(x_{i},x_{j})$, hence the $a_{i,j}^{k,l}$ entry of $A$ is nonzero if and only if the $a_{k,l}^{i,j}$ entry of $A$ is nonzero (and it happens exactly  when $r(x_{i}, x_{j})=(x_{k},x_{l})$). It follows that $A$ is a symmetric matrix. Moreover,  $A^{2}$ equals the identity matrix, so $A$ is a permutation matrix.
 It follows that every row in $A$  has exactly one nonzero entry.

 By Corollary 2.3 from \cite{20} and Corollary $8.2.4$ from \cite{21}, 
if $X$ is finite, then an involutive solution  $(X,r)$  is right non-degenerate
if and only if it is left non-degenerate.

It remains to consider the property that $r$ is right non-degenerate. 
 Assume that the solution $(X,r)$ is right non-degenerate.

 Fix element $i$; then we know that, for every $j$, there are $t_{j}, q_{j}\leq n$ such that  $r(x_{i}, x_{j})=(x_{t_{j}},x_{q_{j}} )$.
 Recall that right non-degeneracy of $r$ means exactly  
 that among elements $t_{1}, t_{2}, \ldots ,t_{n}$ we can find every positive integer not exceeding $n$. 
 For $i=1$ the above property implies that  (because $A$ is written in a basis $x_{i}\otimes x_{j}$ with lexicographical order)
 if the first $n$ columns of $A$ are divided into $n\times n$ blocks then each block has a nonzero entry (and since each column has only one nonzero entry, then each such block has exactly one nonzero entry).
By applying it for different $j$, we get that when $A$ is divided in the natural way into n by n blocks, then each block has exactly one  entry equal to $1$, and all other entries equal to $0$.

 By repeating the above reasoning in the reverse order we see that every matrix
 satisfying assumption $2$ gives rise to a non-degenerate involutive set-theoretic solution of the QYBE. 
\end{proof} $\square $

\begin{proposition}\label{unitary} Let $X=\{1, 2, \ldots ,n\}$.
Let $(X,r)$ be a non-degenerate, involutive,  set-theoretic solution of the quantum  Yang-Baxter equation and  let $(X,r^{D})$ be a braided vector space of set-theoretic type where $D=\{d_{i,j}\}_{i,j\in X}$ for some $d_{i,j}\in \mathbb C.$  Let $M$ be the $R$-matrix associated to $(X, r^{D})$ as in Definition \ref{monmat}.
  Then the singular values of the $M$ are $|d_{i,j}|.$   If $|d_{i,j}|=1$ for all $i,j\leq m$ then $M$ is a unitary matrix.
 \end{proposition}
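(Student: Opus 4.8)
The plan is to exploit the fact that $M$ is a monomial matrix (a generalised permutation matrix), for which the singular values are simply the moduli of the nonzero entries. First I would observe that, by Definition \ref{monmat}, the column of $M$ indexed by a pair $(i,j)$ has exactly one nonzero entry, namely $d_{i,j}$, located in the row indexed by $r(x_i,x_j)$. Since $r$ is a bijection of $X\times X$, the assignment $(i,j)\mapsto r(x_i,x_j)$ is a bijection of the index set of pairs, so each row of $M$ likewise contains exactly one nonzero entry. Consequently $M$ factors as $M=PD$, where $D$ is the diagonal matrix whose diagonal entry in position $(i,j)$ is $d_{i,j}$, and $P$ is the permutation matrix placing a $1$ in the row indexed by $r(x_i,x_j)$ and column indexed by $(i,j)$. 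A direct check of the entries confirms $(PD)_{(k,l),(i,j)}=P_{(k,l),(i,j)}\,d_{i,j}$, which equals $d_{i,j}$ exactly when $r(x_i,x_j)=(x_k,x_l)$, as required.

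Next I would compute $M^{*}M=(PD)^{*}(PD)=D^{*}P^{*}PD=D^{*}D$, using that $P$ is a permutation matrix and hence $P^{*}P=I$. The product $D^{*}D$ is the diagonal matrix whose diagonal entries are $|d_{i,j}|^{2}$. Since the singular values of $M$ are by definition the nonnegative square roots of the eigenvalues of $M^{*}M$, and the eigenvalues of the diagonal matrix $D^{*}D$ are precisely the numbers $|d_{i,j}|^{2}$ (counted with multiplicity), the singular values of $M$ are exactly the values $|d_{i,j}|$ for $i,j\in X$.

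For the final assertion, suppose $|d_{i,j}|=1$ for all $i,j\leq n$. Then $D^{*}D=I$, so $M^{*}M=I$. As $M$ is square, this gives $M^{-1}=M^{*}$, which is exactly the statement that $M$ is unitary.

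There is essentially no serious obstacle here; the argument is a short linear-algebra computation once the monomial structure of $M$ is identified. The only point requiring care is the verification that $M$ has exactly one nonzero entry in each row as well as in each column, which rests solely on the bijectivity of $r$ (part of the definition of a set-theoretic solution) rather than on involutivity or non-degeneracy. Those hypotheses are inherited from the ambient setting of this section and, while they guarantee via Proposition \ref{matrix} that the underlying permutation matrix is in fact symmetric and an involution, they are not actually needed for this particular computation of the singular values.
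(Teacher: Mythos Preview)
Your proof is correct and follows essentially the same approach as the paper: both identify $M$ as a monomial matrix and compute the relevant Gram matrix to obtain a diagonal matrix with entries $|d_{i,j}|^{2}$. The only cosmetic differences are that the paper computes $MM^{*}$ rather than $M^{*}M$ and states the diagonality directly without the explicit $M=PD$ factorization, whereas your write-up is more detailed; your closing remark that only bijectivity of $r$ is needed here (not involutivity or non-degeneracy) is a valid observation that the paper does not make.
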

\begin{proof} By Definition \ref{monmat}  the entry  at the intersection of the column indexed by $[i,j]$ 
 and the row indexed by $[k,l]$  of $M$ equals $d_{{i}, {j}}$ if $r({i}, {j})=({k}, {l})$ and $0$ otherwise. 
 Let $M^{*}$ denote the conjugate transpose of  $M$. 
 Notice that 
 $M\cdot  {M}^{*}$ is a diagonal matrix with entries $|d_{i,j}|^{2}$ on the diagonal (in some order). This implies that $M$ is unitary, provided that 
$|{d}_{i,j}|=1$. 
\end{proof} $\square $
\section{ Concrete examples}

In this section we provide some  examples  of some of the  $R$-matrices and  
locally monomial BVS constructed in this paper. 

\medskip
\begin{example}\label{A1}
 This example gives the $R$-matrix obtained in Example \ref{simple} in the case when  $n=4$.
\[
A =\left(
\begin{array}{cccc}
d  E_{44}  & a  E_{1,4}  & b  E_{2,4}  & c   E_{3,4}  \\
c  E_{4,1}    & d  E_{1,1}   & a  E_{2,1}   & b E_{3,1}   \\
b  E_{4,2}  & c  E_{1,2}   & d E_{2,2}  & a E_{3,2}  \\
a E_{4,3}   & b  E_{1,3}   & c E_{2,3}  & d E_{3,3}   
\end{array}\right).
\]

Here $E_{i,j}$  is the $4 \times 4$ matrix which  has all zeros entries  except the element $(i,j)$ which  equals $1$.
\end{example}
\begin{example}\label{A3}
 This example gives the $R$-matrix obtained in Example \ref{simple} in the case when  $n=3$.
\[
A =\left(
\begin{array}{ccc|ccc|ccc}
0 &	0 &	0 &	0 &	0 &	a &	0 &	0 &	0 \\
0 &	0 &	0 &	0 &	0 &	0 &	0 &	0 &	c \\
0 &	0 &	b &	0 &	0 &	0 &	0 &	0 &	0 \\
\hline
0 &	0 &	0 &	b  &	0 &	0 &	0 &	0 &	0 \\
0 &	0 &	0 &	0 &	0 &	0 &	a &	0 &	0 \\
c &	0 &	0 &	0 &	0 &	0 &	0 &	0 &	0 \\
\hline
0 &	0 &	0 &	0 &	c &	0 &	0 &	0 &	0 \\
0 &	0 &	0 &	0 &	0 &	0 &	0 &	b &	0 \\
0 & a &	0 &	0 &	0 &	0 &	0 &	0 &	0
\end{array}\right).
\]

We see that 
\[
A =\left(
\begin{array}{ccc}
b E_{3,3}   & a E_{1,3}   &  c E_{2,3}    \\
c  E_{3,1}  & b E_{1,1}  & a E_{2,1}      \\
a  E_{3,2} &  c E_{1,2}   & b E_{2,2}     \\
\end{array}\right).
\]
\end{example}
\medskip
\begin{example}\label{A2}
  This example gives the $R$-matrix obtained in Example \ref{hura5}.
\[
A =\left(
\begin{array}{cccc}
c E_{4,4}   & c  E_{3,4} &  a E_{1,3}   & b  E_{2,3}  \\
c  E_{4,3}  & c E_{3,3}  & b  E_{1,4}  & a E_{2,4}  \\
b E_{3,1} & a E_{4,1}   & c E_{2,2} & c  E_{1,2}  \\
a E_{3,2} & b  E_{4,2}   & c E_{2,1}   & c E_{1,1}  
\end{array}\right).
\]
 Here $E_{i,j}$ are defined as in Example \ref{A1}.
\end{example}

\medskip

\begin{example}\label{A4}
 This example gives the $R$-matrix obtained as in Proposition \ref{3}  in the case when the cardinality of $X$ is $3$, 
and $X$ has $2$ orbits.
\[
A =\left(
\begin{array}{ccc|ccc|ccc}
0 &	0 &	0 &	0 &	a &	0 &	0 &	0 &	0 \\
0 &	a  &	0 &	0 &	0 &	0 &	0 &	0 &	0 \\
0 &	0 &	0 &	0 &	0 &	0 &	b &	0 &	0 \\
\hline
0 &	0 &	0 &	a &	0 &	0 &	0 &	0 &	0 \\
a &	0 &	0 &	0 &	0 &	0 &	0 &	0 &	0 \\
0 &	0 &	0 &	0 &	0 &	0 &	0 &	b &	0 \\
\hline
0 &	0 &	c &	0 &	0 &	0 &	0 &	0 &	0 \\
0 &	0 &	0 &	0 &	0 &	c &	0 &	0 &	0 \\
0 &	0 &	0 &	0 &	0 &	0 &	0 &	0 &	d
\end{array}\right).
\]
\end{example}

\begin{example}\label{useful} 
 This example  produces the  $R$-matrix obtained as in Proposition \ref{3}  in the case when the cardinality of $X$ is $n$  
and when $X$ has $n$ orbits; the obtained matrix is $n^{2}\times n^{2}$.  The method  uses matrix $S$ which  is the $n\times n$ matrix with consecutive entries and  a matrix $D=diag(d_1, d_2, \ldots,d_n)$ where $d_{1}, \ldots , d_{n}$ are arbitrary. The method is  analogous  for distinct $n$, so 
 we present  it in the case when $n=3$. 
 We will obtain the $R$-matrix obtained as in Proposition \ref{3}  in the case when the cardinality of $X$ is $3$, 
and when $X$ has $3$ orbits. Let $n=3$ and $D=diag(d_1, d_2, \ldots,d_9)$.  
  Let 
\[S=\left(
\begin{array}{ccc}
   1   &   2  &    3\\
     4   &  5  &   6 \\
     7  &    8   &   9
\end{array}\right).
\]
Let  $s_1, s_2, s_3$  denote the columns of $S$, i.e.   
$s_1=(1,4,7)^T$, $s_2=(2,5,8)^T$ and $s_3=(3,6,9)^T$. 
We define a permutation vector  $p=(p_1, p_2, \ldots, p_9)^{T}$,  taking   $p=(s_1^T, s_2^T, s_3^T)^{T}$.
We see that  $p =(1,  4, 7, 2 , 5, 8, 3, 6, 9)^T$. 
Then we form  $A=D P$, where $P$ is  the permutation matrix $P=(e_{p_1}, \ldots, e_{p_9})$ (we permute the columns of the $9$ times $9$ identity matrix).
Denote $d=(d_1, d_2, \ldots,d_9)^{T}$.
We have $A=(d_1 e_1, d_4 e_4, d_7 e_7,  d_2 e_2, d_5 e_5, d_8 e_8,  d_3 e_3, d_6 e_6, d_9 e_9)$ and
\begin{equation}\label{Mama1}
A =A(d)= \left(
\begin{array}{ccc|ccc|ccc}
d_1 &    0 &    0 &  0  &  0 &    0 &    0 &    0 &    0 \\
 0 &    0 &    0 &    d_2 &    0 &    0 &    0 &    0  & 0\\
 0 &    0 &    0 &    0 &    0 &    0 &    d_3 &    0  & 0\\
\hline
0 &    d_4 &    0 &  0  &  0 &    0 &    0 &    0 &    0 \\   
 0 &    0 &    0 &    0 &    d_5 &    0 &    0 &    0  & 0\\
 0 &    0 &    0 &  0  &  0 &    0 &    0 &    d_6 &    0 \\
\hline
   0 &    0 &    d_7 &  0  &  0 &    0 &    0 &    0 &    0 \\     
     0 &    0 &    0 &  0  &  0 &    d_8 &    0 &    0 &    0 \\ 
   0 &    0 &    0 &  0  &  0 &    0 &    0 &    0 &    d_9 
\end{array}\right).
\end{equation}

\medskip

Notice that if $d_2= d_3=d_4=d_6=d_7=d_8=0$  then $A$ is a diagonal matrix.           

If $d_1\ge d_2 \ge \ldots  d_{n^2} > 0$, then the matrix $A$ constructed above is an R-matrix with prescribed singular values $d_1, d_2, \ldots, d_{n^2}$. We recall that the singular values of $A$ are the positive square roots of the eigenvalues of $A^{*} A$, and they play a significant role in many  practical applications. 
If $|d_i|=1$ for $i=1, 2, \ldots, n^2$, then $A$  is unitary.
\end{example}
Now we present   another example of  a unitary  R-matrix.
\begin{example}\label{8}
Let $n=3$ and  define $H=I - \alpha e e^T$, where $e=(1,1,\ldots, 1)^T \in R^9$, $\alpha=2/n^2$.
$H$ is called a reflection (Householder's transformation). 
We can check that $H$ is not an R-matrix, but if we apply the same permutation $P$ as in Example \ref{useful} then  we get that  $A_1=HP$ is a unitary R-matrix.

We have 
\[
A_1 =\frac{1} {9} \left(
\begin{array}{ccc|ccc|ccc}
 7 &    -2  &    -2 &  -2  &  -2 &    -2 &    -2 &    -2 &    -2 \\
 -2 &    -2 &    -2 &    7 &    -2 &    -2 &    -2 &    -2  & -2 \\
 -2 &    -2 &    -2 &    -2 &    -2 &    -2 &    7 &    -2  &  -2\\
\hline
-2 &     7 &    -2 &  -2  &  -2 &    -2 &    -2 &    -2 &    -2 \\   
 -2 &    -2 &    -2 &    -2 &    7 &    -2 &    -2 &    -2  & -2\\
 -2 &    -2 &    -2 &  -2  &  -2 &    -2 &    -2 &    7 &    -2\\
\hline
   -2 &    -2 &    7 &  -2  &  -2 &    -2 &    -2 &    -2 &    -2 \\     
     -2 &    -2 &    -2 &  -2  &  -2 &    7 &    -2 &    -2 &    -2 \\ 
   -2 &    -2 &    -2 &  -2  &  -2 &    -2 &    -2 &    -2 &     7
\end{array}\right).
\]

Notice that $A_1=P- \alpha E$, where $E$ is a matrix with all entries  equal $1$. 
Let $P(3 \times 3)$ be  a Vandermonde matrix  formed for roots of unity, i.e.  
\[
P =\left(
\begin{array}{ccc}
1  &  1   &  1    \\
w_3  & w_2 & w_1 \\
w_3^2 & w_2^2 & w_1^2    \\
\end{array}\right),
\]
where $w_j=\omega^j$  and $\omega=e^{\frac{2 \Pi i}{3}}$. Then we can check that 
$(P\otimes P)^{-1} A_1 (P\otimes P)=A(d)$, where $d=(-1,1,1, \ldots,1)^T$ and $A=A(d)$ is defined in (\ref{Mama1}). 
Note that the columns of $P$ form the set of orthogonal eigenvectors of the ones matrix of size $3 \times 3$. We have $P^{*} P= 3 I$. We obtain that $A_{1}$ is a locally monomial matrix. 
\end{example}

\begin{example}\label{A6} 
Let $E$ be a matrix where all entries  equal $1$. Let $B$  be a locally monomial BVS which was obtained in  Example \ref{simple}, 
with the additional assumption that all non-zero entries of $B$ are equal  to $1$.  Assume 
that $B$ and $E$ are $n^{2}$ by $n^{2}$ matrices for some $n$, and $\alpha , \beta \in {\mathbb C}$ then 
 the matrix  $\alpha B+\beta E$  satisfies the QYBE. 
For example, we obtain  the following unitary $R$-matrix when $\alpha = \frac{-2} {9}$ and $\beta = \frac{7} {9}$.
 \[
A_2 =\frac{1} {9}\left(
\begin{array}{ccc|ccc|ccc}
-2 &	-2 &	-2 &	-2 &	-2 &	7 &	-2 &	-2 &	-2 \\
-2 &	-2 &	-2 &	-2 &	-2 &	-2 &	-2 &	-2 &	7 \\
-2 &	-2 &	7 &	-2 &	-2 &	-2 &	-2 &	-2 &	-2 \\
\hline
-2 &	-2 &	-2 &	7 &	-2 &	-2 &	-2 &	-2 &	-2 \\
-2 &	-2 &	-2 &	-2 &	-2 &	-2 &	7 &	-2 &	-2 \\
7 &	-2 &	-2 &	-2 &	-2 &	-2 &	-2 &	-2 &	-2 \\
\hline
-2 &	-2 &	-2 &	-2 &	7 &	-2 &	-2 &	-2 &	-2 \\
-2 &	-2 &	-2 &	-2 &	-2 &	-2 &	-2 &	7 &	-2 \\
-2 &	7 &	-2 &	-2 &	-2 &	-2 &	-2 &	-2 &	-2
\end{array}\right).
\]
 Surprisingly, this matrix is locally monomial. 
Using the same similarity matrix $P$  as in Example \ref{8}, we find that $(P\otimes P)^{-1} A_2 (P\otimes P)=A(d)$, where  $A=A(d)$ is defined in (\ref{Mama1}), and 
$d=(-w_3,w_2,w_1,w_1,w_3,w_2,w_2,w_1,w_3)^T$.
\end{example}

\medskip

\section*{Acknowledgements}  
 The authors would like to thank  Cesar Galindo, Eric Rowell, Leandro Vendramin,  Robert Weston and Harry Braden
 for their  helpful and enlightening comments. The first author was supported by ERC Advanced grant 320974. We would also like to thank the paper's anonymous referee for providing many useful suggestions as to how to improve the original version.

\end{document}